
\documentclass[a4paper,11pt]{article}

\usepackage[latin1]{inputenc}
\usepackage{amsthm,amsfonts,amsmath}
\usepackage{graphicx,psfrag}
\usepackage{hyperref}
\usepackage{a4wide}
\usepackage{enumerate}

\newtheorem{proposition}{Proposition}[section]
\newtheorem{theorem}[proposition]{Theorem}
\newtheorem{lemma}[proposition]{Lemma}
\newtheorem{corollary}[proposition]{Corollary}
\newtheorem{definition}[proposition]{Definition}

\newenvironment{proofof}[1]{\smallskip\noindent{\textbf{Proof~of~#1.}}%
  \hspace{1pt}}{\hspace{-5pt}{\nobreak\quad\nobreak\hfill\nobreak%
    $\square$\vspace{2pt}\par}\smallskip\goodbreak}

\numberwithin{equation}{section}
\setlength{\delimitershortfall}{-0.1pt}
\allowdisplaybreaks[4]

\renewcommand{\phi}{\varphi}
\renewcommand{\theta}{\vartheta}
\renewcommand{\epsilon}{\varepsilon}
\renewcommand{\L}[1]{\mathbf{L^#1}}

\newcommand{\C}[1]{\mathbf{C^{#1}}}

\newcommand{\Cb}[1]{\mathbf{C_b^{#1}}}

\newcommand{\BV}{\mathbf{BV}}

\newcommand{\modulo}[1]{{\left|#1\right|}}
\newcommand{\norma}[1]{{\left\|#1\right\|}}
\newcommand{\reali}{{\mathbb{R}}}
\newcommand{\naturali}{{\mathbb{N}}}
\newcommand{\Lip}{\mathop\mathbf{Lip}}
\newcommand{\tv}{\mathrm{TV}}
\renewcommand{\O}{\mathinner{\mathcal{O}(1)}}

\newcommand{\Id}{\mathop{\mathbf{Id}}}

\renewcommand{\d}[1]{\mathinner{\mathrm{d}{#1}}}

\begin{document}

\title{Optimization in Structure Population Models \\ through the
  Escalator Boxcar Train}

\author{Rinaldo M.~Colombo$^1$ \qquad Piotr Gwiazda$^2$ \qquad
  Magdalena Rosi\'nska$^3$}

\footnotetext[1]{University of Brescia,
  \texttt{rinaldo.colombo@unibs.it}}

\footnotetext[2]{University of Warsaw, \texttt{pgwiazda@mimuw.edu.pl}}

\footnotetext[3]{University of Warsaw, \texttt{mrosinska@pzh.gov.pl}}

\maketitle

\begin{abstract}
  \noindent The Escalator Boxcar Train (EBT) is a tool widely used in
  the study of balance laws motivated by structure population
  dynamics. This paper proves that the approximate solutions defined
  through the EBT converge to exact solutions. Moreover, this method
  is rigorously shown to be effective also in computing optimal
  controls. As preliminary results, the well posedness of classes of
  PDEs and of ODEs comprising various biological models is also
  obtained. A specific application to welfare policies illustrates the
  whole procedure.

  \medskip

  \noindent\textbf{Keywords:} Escalator Boxcar Train, Structure
  Population Model.

  \medskip

  \noindent\textbf{2010 MSC:} 65M75, 35R06, 92D25.
\end{abstract}

\section{Introduction}
\label{sec:I}

This paper is devoted to the well posedness, to the numerical
approximation and to the optimal control of renewal equations
motivated by physiologically structured population models and whose
solutions attain values in spaces of measures.

The dynamics of populations which are heterogenous with respect to
some individual property can be described through initial -- boundary
value problems for a class of nonlinear first order partial
differential equations (PDE), called renewal equations. Within this
class, one of the first PDE models devoted to population biology is
the renewal equation introduced by Kermack and McKendrick with
reference to epidemiology, see~\cite{Kermack1991a,
  Kermack1991}. There, the time since infection, i.e., the age, plays
the role of a structure parameter, due to its essential role in the
spreading of the epidemic. Equations of the same class are later
proposed by von F\"orster in~\cite{Fo1959} to describe the process of
cell division. The recent monograph~\cite{DeRoosBook} provides an
extensive theoretical and empirical treatment of the ecology of
ontogenetic growth and development of organisms, emphasizing the
importance of an individual--based perspective in understanding the
dynamics of populations and communities.  Classical analytic studies
on these equations are settled in $\L1$ and go back, for instance, to
the monographs of Webb~\cite{Webb}, Iannelli~\cite{Ia1995} or
Thieme~\cite{Th2003}.

The space of positive Radon measures is introduced in biological
applications in~\cite{MeDi1986}. Indeed, whenever the distribution of
individuals is concentrated on discrete values of structure
parameters, for instance at the initial time, the resulting population
density may well lack absolute continuity with respect to the Lebesgue
measure. One is thus lead to consider the problem
\begin{equation}
  \label{eq:3}
  \left\{
    \begin{array}{l}
      \partial_t \mu
      +
      \partial_a \left(b (t,\mu) \, \mu\right)
      +
      c (t,\mu) \, \mu
      =
      0
      \\
      \displaystyle
      \left(b (t,\mu)\right) (0) \, D_\lambda\mu (0+)
      =
      \int_0^{+\infty} \beta (t,\mu) \, \d{\mu} 
      \\
      \mu (0) = \mu_o
    \end{array}
  \right.
\end{equation}
where $t \in \reali_+$ is time and $a \in \reali_+$ is a biological
parameter, typically age or size. The unknown $\mu$ is a time
dependent, non-negative and finite Radon measure. The growth function
$b$ and the mortality rate $c$ are strictly positive, while the birth
function $\beta$ is non-negative. By $D_\lambda\mu (0+)$ we denote the
Radon--Nikodym derivative of $\mu$ with respect to the Lebesgue
measure $\lambda$ computed at $0$. The initial datum $\mu_o$ is a
non-negative Radon measure.

The analysis of solutions to~\eqref{eq:3} in spaces of positive Radon
measures was initiated in~\cite{DiGe2005}, where the authors show the
weak$*$ continuity of solutions with respect to time and initial
data. They also point out the key relevance of the dependence of
solutions on the various model parameters, which was obtained
in~\cite{CarrilloColomboGwiazdaUlikowska, GwiazdaThomasEtAl,
  GwiazdaMarciniak, Ul2012}.

The Lipschitz continuous dependence of solutions in measure spaces
from time and initial datum is a preliminary step towards the
convergence of the so called \emph{particle methods}. These are
numerical algorithms whose starting idea is the representation of a
heterogeneous population as a sum of Dirac masses evolving in
time. This representation is consistent with the usual experimental
attitude of concentrating real data in discrete cohorts evolving in
time. On these grounds, the numerical algorithm usually referred to as
the \emph{Escalator Boxcar Train} (EBT) is introduced back
in~\cite{deRoos}. Remarkably, in spite of the wide success of this
method, a convergence proof of the EBT appears only rather recently
in~\cite{Linus}. A key role in this result is played by the bounded
Lipschitz distance introduced in~\cite{GwiazdaThomasEtAl}. Detailed
estimates on the order of convergence are then provided
in~\cite{GwiazdaEtAl}.

Another numerical method effective in the computation of solutions to
structured population models is proposed in~\cite{CaGwUl2013}. Here, a
key role is played by the \emph{operator splitting} method. According
to it, the measure valued semigroup generated by renewal equations can
be approximated through the iterated application of simpler
semigroups. More precisely, a problem involving both transport terms
and nonlocal growth terms is approximated through two problems, each
involving only one of the two processes. The analytic framework
estabilished in~\cite{CarrilloColomboGwiazdaUlikowska} allows a
detailed control of the convergence rate of the algorithm.

From the measure theoretic point of view, the above mentioned results
rely on the use of Wasserstein (or Monge-Kantorovich) type metrics,
adapted to the nonconservative character of~\eqref{eq:3}. This
methodology was proposed in~\cite{GwiazdaThomasEtAl} for a flat metric
(bounded Lipschitz distance) and in~\cite{GwiazdaMarciniak} for a
Wasserstein metric, suitably modified to deal with nonnegative Radon
measures with integrable first moment. A relevant advantage of this
approach is in providing a structure of a space appropriate both to
compare solutions and to study their stability. Remark that precise
estimates on the continuous dependence of solutions on the modeling
parameters plays a key role in the numerical approximations and in
calibrating the model calibration on the basis of experimental
data. We refer to~\cite{piccoli2014} for the definition and
propertiesof a similar metric structure.

Similar techniques based on particle methods are usual tools in
simulating kinetic models since more than three decades in physics,
see for instance~\cite{Ra1985} and the references therein. Recent
applications include for instance the porous medium
equation~\cite{Ot2001, WeWi2010} and the isentropic Euler equations in
fluid mechanics~\cite{GaWe2009, WeWi2010}. Other artcle method are
found also in the study of problems related to crowd dynamics and
pedestrians flow, see~\cite{evers2014, evers2015, piccoli2014}, as
well as in the description of the collective motion of large groups of
agents, see~\cite{CaCaRo2011}. Differently from the case of structured
population models, the original particle methods are mainly designed
for problems where the total mass, or number of individuals, is
conserved.

\smallskip

Aiming at the optimal control of the solution to~\eqref{eq:3}, we
introduce therein a control parameter $u$, possibly time and/or state
dependent, attaining values in a given set $\mathcal{U}$. Therefore,
we obtain:
\begin{equation}
  \label{eq:5}
  \left\{
    \begin{array}{l}
      \partial_t \mu
      +
      \partial_a \left(b (t,\mu;u) \, \mu\right)
      +
      c (t,\mu;u) \, \mu
      =
      0
      \\
      \displaystyle
      \left(b (t,\mu;u)\right) (0) \, D_\lambda\mu (0+)
      =
      \int_0^{+\infty} \beta (t,\mu;u) \, \d{\mu}
      \\
      \mu (0) = \mu_o
    \end{array}
  \right.
\end{equation}
Together with~\eqref{eq:5}, we are given a cost functional
\begin{displaymath}
  \mathcal{J} (u)
  =
  \int_0^{+\infty} j\left(t,u (t),\mu(t)\right) \d{t}
\end{displaymath}
and we provide below a \emph{constructive} algorithm to find, within a
suitable function space, a control function $u_*$ optimal in the sense
that
\begin{displaymath}
  \mathcal{J} (u_*) = \min_{u (t) \in \mathcal{U}} \mathcal{J} (u) \,.
\end{displaymath}
As is well known, solutions to conservation or balance laws typically
depend in a Lipschitz continuous way from the initial datum as well as
from the functions defining the equation. This does not allow the use
of differential tools in the search for the optimal control.

Here, \emph{constructive} should be understood in the following sense:
on the basis of the control problem for~\eqref{eq:5}, we define a
sequence of control problems for a system of ordinary differential
equations and prove that the corresponding sequence of optimal
controls converges to an optimal control for the original
problem. More precisely, we approximate the solution to~\eqref{eq:5}
by means of the EBT algorithm as defined
in~\cite[Section~III]{GwiazdaEtAl}. The functional $\mathcal{J}$
computed along approximate solutions is proved to be a smooth, namely
$\C1$, function of the control parameter $u$ and this allows to
exhibit the existence of an optimal control for each approximate
problem. A limiting procedure constructively ensures the existence of
the optimal control for the original problem~\eqref{eq:5}.

\smallskip

The next section presents results on the well posedness
of~\eqref{eq:3} and the results on the escalator boxcar train
algorithm that allow to obtain our main result, namely the
construction of a sequence of controls that converge to an optimal
control for~\eqref{eq:5}. Section~\ref{sec:E} is devoted to a possible
application of the theory here developed. The technical proofs are
deferred to Section~\ref{sec:T}, with a final Appendix that gathers
necessary results concerning ordinary differential equations.

\section{Main Results}
\label{sec:A}

Throughout, we denote $\reali_+ = \left[0, +\infty\right[$. Let
$(M,d_M)$ be a metric space and $(V, \norma{\ }_V)$ be a normed
space. Then, $\C0 (M; V)$, respectively $\C{0,1} (M; V)$ is the space
of continuous, respectively Lipschitz continuous, functions defined on
$M$ and attaining values in $V$, equipped with the norm
\begin{eqnarray}
  \label{eq:dC0}
  \norma{\phi}_{\C0 (M,V)}
  & = &
  \sup_{x \in M} \norma{\phi (x)}_{V}
  \,,\mbox{ respectively}
  \\
  \label{eq:dLip}
  \norma{\phi}_{\C{0,1} (M,V)}
  & = &
  \max\left\{
    \sup_{x \in M} \norma{\phi (x)}_{V}, \,
    \sup_{x_1,x_2\in M}\frac{\norma{\phi (x_2) - \phi (x_1)}_V}{d_M (x_1,x_2)}
  \right\} \,.
\end{eqnarray}
\noindent Given $T \in \reali$ and a function $f\colon [0,T] \to V$,
we set
\begin{equation}
  \label{eq:TV}
  \tv_{V} (f)
  =
  \sup\left\{
    \sum_{i=1}^n
    \norma{f (t_i)-f (t_{i-1})}_{V}
    \colon
    n \in \naturali
    \mbox{ and }
    \begin{array}{@{}c@{\,}r}
      t_i \in [0,T] & \mbox{ for } i=0, \ldots, n
      \\
      t_{i-1}<t_i & \mbox{ for } i=1, \ldots, n
    \end{array}
  \right\} \,.
\end{equation}
The space $\mathcal{M}^+ (\reali_+)$ of positive Radon measure on
$\reali_+$ is equipped with the flat distance
\begin{equation}
  \label{eq:d}
  d (\mu', \mu'')
  =
  \sup
  \left\{
    \int_{\reali_+}\phi \, \d{(\mu'-\mu'')} \colon
    \phi \in \C1 (\reali_+; [-1,1]) \mbox{ with } \Lip (\phi) \leq 1
  \right\} \,,
\end{equation}
see~\cite[Section~2]{CarrilloColomboGwiazdaUlikowska}.  Below, for
positive $T, \mathcal{L}$ and $\mathcal{C}$, we use the space
$\mathcal{F}$ of functions
\begin{displaymath}
  f \colon [0,T] \to
  \C{0,1} ( \mathcal{M}^+ (\reali_+) \times \reali_+;\reali_+)
\end{displaymath}
with the properties:
\begin{enumerate}[$(\mathcal{F}_1)$]
\item $f$ is bounded: $\norma{f (t)}_{\C{0,1} (\mathcal{M}^+
    (\reali_+) \times \reali_+;\reali)} \leq \mathcal{L}$ for all $t
  \in [0,T]$.
\item $f$ has bounded total variation in time: $\tv_{\C0
    (\mathcal{M}^+ (\reali_+) \times \reali_+; \reali_+)} (f ) \leq
  \mathcal{C}$.
\end{enumerate}
Throughout, the constants $T, \mathcal{L}$ and $\mathcal{C}$ are kept
fixed and the dependence of $\mathcal{F}$ on them is omitted. In
$\mathcal{M}^+ (\reali_+) \times \reali_+$ we use the distance
\begin{displaymath}
  d_{\mathcal{M}^+ (\reali_+) \times \reali_+}
  \left((\mu_1,a_1), (\mu_2,a_2)\right)
  =
  d (\mu_1,\mu_2) + \modulo{a_2 - a_1} \,,
\end{displaymath}
where $d$ is as in~\eqref{eq:d}. Therefore, $(\mathcal{F}_1)$ also
implies that $f$ is Lipschitz continuous in $\mu$ and $a$ uniformly in
$t$, in the sense that for all $t \in [0,T]$, $\mu_1,\mu_2 \in
\mathcal{M}^+ (\reali_+)$ and $a_1,a_2 \in \reali_+$,
\begin{displaymath}
  \modulo{\left(f (t)\right)(\mu_1,a_1) - \left(f (t)\right)(\mu_2,a_2)}
  \leq
  \mathcal{L} \left(d (\mu_1,\mu_2) + \modulo{a_1 - a_2}\right) \,.
\end{displaymath}

\subsection{PDE -- Well Posedness}
\label{sub:pde}

As a first step, we need to extend the well posedness of~\eqref{eq:3}
obtained in~\cite[Theorem~2.11]{CarrilloColomboGwiazdaUlikowska} to
the case of functions $b$ and $c$ being only of bounded variation in
time. First, recall the definition of solution to~\eqref{eq:3}
attaining as values Radon measures.

\begin{definition}{~\cite[Definition~3.1]{GwiazdaLorenz}}
  \label{def:pdeSol}
  Fix $T>0$ and let $\mu_o \in \mathcal{M}^+ (\reali_+)$. By
  \emph{solution} to~\eqref{eq:3} we mean a function $\mu \colon [0,T]
  \to \mathcal{M}^+ (\reali_+)$ with the following properties:
  \begin{enumerate}
  \item $\mu$ is Lipschitz continuous with respect to the flat
    distance~\eqref{eq:d};
  \item for all $\phi \in (\C1 \cap \C{0,1}) ([0,T] \times \reali_+;
    \reali)$
    \begin{eqnarray*}
      & &
      \int_{\reali_+} \phi (T,a) \d{\mu_t (a)}
      -
      \int_{\reali_+} \phi (0,a) \d{\mu_o (a)}
      \\
      & = &
      \int_0^T \int_{\reali_+} \partial_t \phi (t,a) \d{\mu_t (a)} \d{t}
      \\
      & &
      +
      \int_0^T \int_{\reali_+}
      \left(
        \partial_a \phi (t,a) \; \left(b (t,\mu_t)\right) (a)
        +
        \phi (t,a) \; \left(c (t,\mu_t)\right) (a)
      \right)
      \d{\mu_t (a)} \d{t}
      \\
      & &
      +\int_0^T \int_{\reali_+}
      \phi (t,0) \; \left(\beta (t,\mu_t)\right)\! (a) \,
      \d{\mu_t (a)} \d{t} \,.
    \end{eqnarray*}
  \end{enumerate}
\end{definition}

We now weaken the assumptions on the regularity in time used
in~\cite[Theorem~2.11]{CarrilloColomboGwiazdaUlikowska}.

\begin{theorem}
  \label{thm:pdeWP}
  Fix $T>0$. Let $b,c,\beta \in \mathcal{F}$. Then, for any $\mu_o \in
  \mathcal{M}^+ (\reali_+)$, problem~\eqref{eq:3} admits a unique
  solution in the sense of Definition~\ref{def:pdeSol}. Moreover,
  there exists a constant $C$ dependent only on $\mathcal{C},
  \mathcal{L}$ and $T$ such that if for $i=1,2$, $\mu^i$ is the
  solution to~\eqref{eq:3} with initial data $\mu_o^i$ and $b,c,\beta$
  replaced by $b_i, c_i, \beta_i$, then,
  \begin{equation}
    \label{eq:pdeStab}
    \begin{array}{@{}rcl@{}}
      d\left(\mu^1 (t) , \mu^{2} (t)\right)
      & \leq &
      \displaystyle
      d (\mu_o^1,\mu_o^2) \, e^{C\, t}
      \\[4pt]
      & &
      \displaystyle
      +
      C \, t \, e^{C\, t}
      \Bigl( \,
      \sup_{t \in[0,T]}
      \norma{b_1 (t) -b_{2} (t)}_{\C0 (\mathcal{M}^+ (\reali_+) \times \reali_+;\reali)}
      \\[12pt]
      & &
      \displaystyle
      \qquad\qquad\qquad
      +
      \sup_{t \in[0,T]}
      \norma{c_1 (t) -c_{2} (t)}_{\C0 (\mathcal{M}^+ (\reali_+) \times \reali_+;\reali)}
      \\[12pt]
      & &
      \displaystyle
      \qquad\qquad\qquad
      +
      \sup_{t \in[0,T]}
      \norma{\beta_1 (t) -\beta_{2} (t)}_{\C0 (\mathcal{M}^+ (\reali_+) \times \reali_+;\reali)}
      \Bigr) \,.
    \end{array}
  \end{equation}
\end{theorem}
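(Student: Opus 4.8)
The plan is to reduce the new, weaker bounded-variation-in-time hypothesis to the Lipschitz-in-time setting already handled in~\cite[Theorem~2.11]{CarrilloColomboGwiazdaUlikowska}, via a time discretisation followed by a stability-based limit. First I would treat coefficients that are piecewise constant in time: fix a partition $0 = t_0 < t_1 < \cdots < t_N = T$ and suppose $b,c,\beta$ constant on each $\left[t_{k-1},t_k\right[$. On every such subinterval the coefficients are trivially Lipschitz in time, so~\cite[Theorem~2.11]{CarrilloColomboGwiazdaUlikowska} applies and yields a unique solution; concatenating these pieces, each solution's terminal value serving as initial datum on the next subinterval, produces a solution $\mu$ on $[0,T]$ that is Lipschitz in the flat distance~\eqref{eq:d}, the per-interval Lipschitz constant depending only on $\mathcal{L}$ and the uniformly bounded total mass. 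The crucial observation is that the stability constant furnished by~\cite[Theorem~2.11]{CarrilloColomboGwiazdaUlikowska} depends only on $\mathcal{L}$ and $T$, hence is the same on every subinterval and does not degrade as $N\to\infty$.

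Next I would iterate that stability estimate across the subintervals. Applying it on each $\left[t_{k-1},t_k\right]$ and telescoping the exponential factors, the contributions $C\,(t_k-t_{k-1})\,e^{C(t_k-t_{k-1})}\,\norma{b_1-b_2}_{\C0}$ assemble into a time integral, giving the sharper bound
\[
  d\left(\mu^1(t),\mu^2(t)\right)
  \leq
  d(\mu_o^1,\mu_o^2)\,e^{Ct}
  + C\,e^{Ct}\int_0^t\left(
    \norma{b_1(s)-b_2(s)}_{\C0}
    + \norma{c_1(s)-c_2(s)}_{\C0}
    + \norma{\beta_1(s)-\beta_2(s)}_{\C0}
  \right)\d{s}\,,
\]
all $\C0$ norms being taken on $\mathcal{M}^+(\reali_+)\times\reali_+$. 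Bounding $\int_0^t\leq t\sup_{[0,T]}$ recovers exactly~\eqref{eq:pdeStab}, but it is this $\L1$-in-time form that drives the limit. For a general $b,c,\beta\in\mathcal{F}$ I would then approximate by the piecewise-constant-in-time functions $b^N,c^N,\beta^N$ sampling $b,c,\beta$ at the partition points. By $(\mathcal{F}_1)$ these still obey the uniform $\C{0,1}$ bound $\mathcal{L}$ and by $(\mathcal{F}_2)$ they have total variation at most $\mathcal{C}$, so $b^N,c^N,\beta^N\in\mathcal{F}$; moreover $\int_0^T\norma{b^N(t)-b(t)}_{\C0}\,\d{t}\leq\bigl(\max_k(t_k-t_{k-1})\bigr)\,\mathcal{C}\to0$, and likewise for $c,\beta$. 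The $\L1$-in-time estimate then forces the associated solutions $\mu^N$ to be Cauchy in $\C0\left([0,T];(\mathcal{M}^+(\reali_+),d)\right)$, and their limit $\mu$ is the candidate solution.

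It remains to verify that $\mu$ solves~\eqref{eq:3} in the sense of Definition~\ref{def:pdeSol}, and here lies the main obstacle. Item~1 (Lipschitz continuity in the flat distance) passes to the limit from the uniform Lipschitz constant of the $\mu^N$. The delicate point is item~2, the weak formulation, in which the coefficients enter composed as $\left(b(t,\mu_t)\right)(a)$, so that both the map $b^N$ and its argument $\mu^N_t$ vary with $N$. I would split $b^N(t,\mu^N_t)-b(t,\mu_t)$ as $\left[b^N(t,\mu^N_t)-b^N(t,\mu_t)\right]+\left[b^N(t,\mu_t)-b(t,\mu_t)\right]$: the first bracket is bounded by $\mathcal{L}\,d(\mu^N_t,\mu_t)$ through the uniform Lipschitz-in-$\mu$ property of $(\mathcal{F}_1)$ and tends to $0$ uniformly in $t$, while the second is controlled in $\L1$-in-time by the coefficient approximation. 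Dominated convergence, with the constant $\mathcal{L}$ as majorant and the uniformly bounded total mass of $\mu^N_t$ on $[0,T]$, then lets every integral in Definition~\ref{def:pdeSol} pass to the limit, identifying $\mu$ as a solution and thereby establishing existence.

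Finally, uniqueness together with the continuous dependence~\eqref{eq:pdeStab} would follow from a stability estimate valid for \emph{arbitrary} pairs of solutions, obtained directly at the level of Definition~\ref{def:pdeSol}: testing the two weak formulations against near-optimal competitors for the flat distance~\eqref{eq:d} and applying a Gronwall argument in time yields~\eqref{eq:pdeStab} with a constant $C$ depending only on $\mathcal{C},\mathcal{L}$ and $T$. The only way the time regularity of $b,c,\beta$ enters this computation is through integration in $t$, so the argument is insensitive to the upgrade from Lipschitz-in-time to merely bounded-variation-in-time coefficients and carries over verbatim. Choosing identical data gives $d\left(\mu^1(t),\mu^2(t)\right)=0$, hence uniqueness; the general case is precisely~\eqref{eq:pdeStab}.
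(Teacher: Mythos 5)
Your proposal is correct and follows the same overall strategy as the paper: approximate the $\BV$-in-time coefficients by piecewise-constant-in-time ones, solve by concatenating applications of the Lipschitz-in-time well-posedness result of~\cite{CarrilloColomboGwiazdaUlikowska} on the subintervals, show the approximate solutions form a Cauchy sequence via the stability estimate, and pass to the limit in the weak formulation of Definition~\ref{def:pdeSol}. The one genuine difference is the mode of approximation of the coefficients. The paper invokes Lemma~\ref{lem:BV}, which produces piecewise constant approximants converging \emph{uniformly} in time (possible because $\BV$ functions are regulated, so the partition can be adapted to the function); this lets it use the $\sup$-in-time stability estimate of~\cite[Theorem~2.8~(iv)]{CarrilloColomboGwiazdaUlikowska} as a black box. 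You instead sample at the nodes of an arbitrary refining partition, which in general yields only $\L1$-in-time convergence of the coefficients (a jump of $b$ at a point never captured by the partitions blocks uniform convergence), and you correctly compensate by telescoping the per-subinterval estimates into an integral-in-time stability bound. Both routes close: your integral form is in fact slightly sharper than~\eqref{eq:pdeStab} and, as you note, $\L1$-in-time control of the coefficients suffices for the limit passage in the weak formulation since they only appear under time integrals. The weakest link is your final paragraph, where you assert that the Gronwall-type stability argument for \emph{arbitrary} pairs of Definition~\ref{def:pdeSol} solutions "carries over verbatim" to $\BV$ coefficients --- that is precisely the hard content of the cited well-posedness theorems and deserves more than an assertion; the paper sidesteps it by obtaining~\eqref{eq:pdeStab} as a limit of the stability estimates for the approximants, though it is comparably terse on uniqueness itself.
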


\noindent The proof is deferred to Section~\ref{sec:T}.

Aiming at the study of~\eqref{eq:5}, we extend the definition of
$\mathcal{F}$ as follows. Fix $T>0$ and a compact subset $\mathcal{U}$
of $\reali^N$, for fixed positive $T, \mathcal{L}, \mathcal{C}$ and a
positive integer $N$, we introduce the space $\mathcal{F}^u$ of
functions
\begin{displaymath}
  f \colon [0,T] \times \mathcal{U} \to \C{0,1} (\mathcal{M}^+ (\reali_+) \times \reali_+;\reali)
\end{displaymath}
with the properties:
\begin{enumerate}[$(\mathcal{F}^u_1)$]
\item $f$ is bounded: $\norma{f (t,u)}_{\C{0,1} (\mathcal{M}^+
    (\reali_+) \times \reali_+;\reali)} \leq \mathcal{L}$ for all $t
  \in [0,T]$ and all $u \in \mathcal{U}$.
\item $f$ has bounded total variation in $t$ uniformly in $u$:
  $\tv_{\C0(\mathcal{M}^+ (\reali_+) \times \reali_+; \reali_+)}
  \left(f (\cdot,u)\right) \leq \mathcal{C}$ for all $u \in
  \mathcal{U}$, for all $u \in \mathcal{U}$.
\item $f$ is Lipschitz continuous in the control uniformly in time:
  for all $t \in [0,T]$ and for all $u_1, u_2 \in \mathcal{U}$,
  $\norma{f (t,u_1) - f (t,u_2)}_{\C0(\mathcal{M}^+ (\reali_+) \times
    \reali_+;\reali)} \leq \mathcal{L} \, \norma{u_1-u_2}$.
\end{enumerate}
\noindent As above, we remark that $(\mathcal{F}^u_1)$ ensures that $f
(t;u)$ is Lipschitz continuous in $\mu$ and $a$ uniformly in $t$ and
$u$: for all $t \in [0,T]$, $u \in \mathcal{U}$, $\mu_1,\mu_2 \in
\mathcal{M}^+ (\reali_+)$ and $a_1,a_2 \in \reali_+$,
\begin{displaymath}
  \modulo{\left(f (t,u)\right) (\mu_1,a_1)
    -
    \left(f (t,u)\right) (\mu_2,a_2)}
  \leq
  \mathcal{L} \left(d (\mu_1,\mu_2) + \modulo{a_1 - a_2}\right) \,.
\end{displaymath}
\noindent In~$(\mathcal{F}^u_2)$, the total variation is computed as
in~\eqref{eq:TV}, keeping $u$ fixed.  Throughout, the constants $T,
\mathcal{L}$ and $\mathcal{C}$ are kept fixed and the dependence of
$\mathcal{F}^u$ on them is omitted.

The extension of Definition~\ref{def:pdeSol} from the case
of~\eqref{eq:3} to that of~\eqref{eq:5} is immediate.

\begin{corollary}
  \label{cor:pdeWP}
  Fix $T>0$ and a compact subset $\mathcal{U}$ of $\reali^N$. Let
  $b,c,\beta \in \mathcal{F}^u$. Then, for any $\mu_o \in
  \mathcal{M}^+ (\reali_+)$ and any $u \in \BV ([0,T];\mathcal{U})$,
  problem~\eqref{eq:5} admits a unique solution. Moreover, there
  exists a constant a constant $C$ dependent only on $\mathcal{L},
  \mathcal{C}$ and $T$ such that if for $i=1,2$, $\mu^i$ is the
  solution to~\eqref{eq:3} with initial data $\mu_o^i$, and
  $b,c,\beta,u$ replaced by $b_i,c_i,\beta_i, u_i$, then,
  \begin{equation}
    \label{eq:pdeStabControl}
    \begin{array}{@{}r@{\,}c@{\,}l@{}}
      d\left(\mu^1 (t) , \mu^{2} (t)\right)
      \leq
      \displaystyle
      d (\mu_o^1,\mu_o^2) \, e^{C\, t}
      +
      C \, t \, e^{C\, t}
      & \Bigl( &
      \displaystyle
      \sup_{t \in [0,T] }
      \norma{b_1 (t) -b_{2} (t)}_{
        \C0 (\mathcal{M}^+ (\reali_+) \times \mathcal{U}\times\reali_+;\reali)}
      \\[12pt]
      & &
      \displaystyle
      +
      \sup_{t \in [0,T] }
      \norma{c_1 (t) -c_{2} (t)}_{\C0 (\mathcal{M}^+ (\reali_+) \times \mathcal{U}\times\reali_+;\reali)}
      \\[12pt]
      & &
      \displaystyle
      +
      \sup_{t \in [0,T] }
      \norma{\beta_1 (t) -\beta_{2} (t)}_{\C0 (\mathcal{M}^+ (\reali_+) \times \mathcal{U}\times\reali_+;\reali)}
      \\[12pt]
      & &
      \displaystyle
      +
      \sup_{t \in [0,T] }\norma{u_1 (t) -u_{2} (t)}
      \Bigr) \,.
    \end{array}
  \end{equation}
\end{corollary}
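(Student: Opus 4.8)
The plan is to reduce Corollary~\ref{cor:pdeWP} to Theorem~\ref{thm:pdeWP} by freezing the control along the given trajectory. Fix $u \in \BV ([0,T];\mathcal{U})$ and $b,c,\beta \in \mathcal{F}^u$, and introduce the substituted coefficients $\hat b (t) = b\left(t,u (t)\right)$, $\hat c (t) = c\left(t,u (t)\right)$, $\hat\beta (t) = \beta\left(t,u (t)\right)$, each a map $[0,T] \to \C{0,1} (\mathcal{M}^+ (\reali_+) \times \reali_+;\reali)$. With this substitution, problem~\eqref{eq:5} driven by $u$ coincides exactly with problem~\eqref{eq:3} for the coefficients $\hat b,\hat c,\hat\beta$, and the two notions of solution agree. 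Hence it suffices to verify $\hat b,\hat c,\hat\beta \in \mathcal{F}$ and invoke Theorem~\ref{thm:pdeWP}.

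Consider next membership in $\mathcal{F}$. Bound $(\mathcal{F}_1)$ is immediate, since $\norma{\hat b (t)}_{\C{0,1}} = \norma{b (t,u (t))}_{\C{0,1}} \leq \mathcal{L}$ by $(\mathcal{F}^u_1)$. The substantive point is the total variation bound $(\mathcal{F}_2)$, for which I would prove
\[
  \tv_{\C0 (\mathcal{M}^+ (\reali_+) \times \reali_+; \reali_+)} (\hat b)
  \ \leq\ \mathcal{C} + \mathcal{L} \, \tv (u) \,,
\]
and likewise for $\hat c,\hat\beta$. Granting this, $\hat b,\hat c,\hat\beta$ lie in $\mathcal{F}$ (with Lipschitz constant $\mathcal{L}$ and total variation constant $\mathcal{C} + \mathcal{L}\,\tv (u)$), so Theorem~\ref{thm:pdeWP} yields existence and uniqueness of the solution to~\eqref{eq:5}.

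For the stability estimate~\eqref{eq:pdeStabControl} I would apply~\eqref{eq:pdeStab} to the pairs $\hat b_i,\hat c_i,\hat\beta_i$, $i=1,2$, and then estimate the resulting $\C0$ differences: for each $t$,
\[
  \norma{\hat b_1 (t) - \hat b_2 (t)}_{\C0}
  = \norma{b_1 (t,u_1 (t)) - b_2 (t,u_2 (t))}_{\C0}
  \leq \norma{b_1 (t,u_1 (t)) - b_2 (t,u_1 (t))}_{\C0} + \norma{b_2 (t,u_1 (t)) - b_2 (t,u_2 (t))}_{\C0} ,
\]
where the first summand is bounded by $\norma{b_1 (t) - b_2 (t)}_{\C0 (\mathcal{M}^+ (\reali_+) \times \mathcal{U} \times \reali_+;\reali)}$ and the second, by $(\mathcal{F}^u_3)$, by $\mathcal{L}\,\norma{u_1 (t) - u_2 (t)}$. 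Summing the analogous bounds for $c$ and $\beta$, taking $\sup_t$, and absorbing $\mathcal{L}$ into the constant reproduces exactly the right-hand side of~\eqref{eq:pdeStabControl}. Note that for the Corollary to hold for \emph{every} $u \in \BV ([0,T];\mathcal{U})$, with $\tv (u)$ unrestricted, the stability constant cannot depend on the total variation of the coefficients; it must be the constant of~\eqref{eq:pdeStab} in its form governed only by the Lipschitz-in-$\mu$ bound $\mathcal{L}$ and by $T$ (consistent with the Gr\"onwall structure of~\eqref{eq:pdeStab}, whose exponential rate is set by the Lipschitz constant), the role of $\mathcal{C}$ being confined to existence. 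Thus the resulting constant depends only on $\mathcal{L},\mathcal{C},T$, as claimed.

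The main obstacle is the total variation estimate for $\hat b$. Splitting each increment $b (t_i,u (t_i)) - b (t_{i-1},u (t_{i-1}))$ by inserting $b (t_{i-1},u (t_i))$ produces a control part, correctly bounded by $\mathcal{L}\,\tv (u)$ through $(\mathcal{F}^u_3)$, and a time part $\sum_i \norma{b (t_i,u (t_i)) - b (t_{i-1},u (t_i))}_{\C0}$ in which the control is frozen per subinterval at the varying levels $u (t_i)$. This split is genuinely lossy: $(\mathcal{F}^u_2)$ bounds the variation of $b (\cdot,v)$ only at a single fixed level $v$, and a sum of subinterval variations taken at different levels can far exceed $\mathcal{C}$. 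The correct reading is that $(\mathcal{F}^u_2)$ forbids $b$ from having large time variation along \emph{any} fixed control level, so a control of bounded variation, being close to constant, cannot make $\hat b$ accumulate more time variation than $\mathcal{C}$ apart from a budget proportional to the level changes it performs, that is, to $\mathcal{L}\,\tv (u)$. Turning this heuristic into a proof requires a Vol'pert--type chain rule keeping track of the cancellation between the time and control variations of $\hat b$; this interplay, rather than any isolated use of $(\mathcal{F}^u_2)$ or $(\mathcal{F}^u_3)$, is the crux of the argument, and is precisely the step I expect to demand the most care.
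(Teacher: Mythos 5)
Your reduction is exactly the paper's: set $\hat b(t)=b\left(t,u(t)\right)$ (likewise for $c,\beta$), check $\hat b,\hat c,\hat\beta\in\mathcal{F}$, invoke Theorem~\ref{thm:pdeWP}, and derive~\eqref{eq:pdeStabControl} from~\eqref{eq:pdeStab} by inserting a mixed term and using $(\mathcal{F}^u_3)$; your stability computation coincides with the paper's almost verbatim. The difference is that the paper disposes of the membership $\hat b\in\mathcal{F}$ in one sentence, while you single out the total variation bound $(\mathcal{F}_2)$ as the crux and leave it open, appealing to an unspecified Vol'pert-type chain rule. That is a genuine gap, and your suspicion is better founded than you state: the inequality $\tv(\hat b)\le\mathcal{C}+\mathcal{L}\,\tv(u)$ is actually \emph{false} under the literal reading of $(\mathcal{F}^u_2)$ (variation bounded at each fixed control level). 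Sketch of a counterexample: let $b(t,v)=\sum_n c_n\phi_n(t)\psi_n(v)$ with $\phi_n$ a sawtooth performing $k_n$ oscillations on pairwise disjoint time intervals $I_n$, $\psi_n$ a tent on pairwise disjoint adjacent control intervals $J_n$ of length $\ell_n=2^{-n-1}$, $c_n=\mathcal{L}\ell_n/2$ and $2c_nk_n=\mathcal{C}$. Then $(\mathcal{F}^u_1)$--$(\mathcal{F}^u_3)$ all hold (for fixed $v$ only one summand is active, so the time variation is $\le\mathcal{C}$; for fixed $t$ the Lipschitz constant in $v$ is $\le 2c_n/\ell_n=\mathcal{L}$), the piecewise constant control equal to the center of $J_n$ on $I_n$ has finite variation, yet $\tv(\hat b)=\sum_n 2c_nk_n=+\infty$. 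No chain rule recovers a finite bound from these hypotheses alone, so as written your plan cannot be completed (and the paper's one-line assertion conceals the same issue).

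What does make the step work is the stronger reading of $(\mathcal{F}^u_2)$ suggested by the words ``uniformly in $u$'': bounded variation of the map $t\mapsto f(t,\cdot)$ in the norm of $\C0\left(\mathcal{M}^+(\reali_+)\times\mathcal{U}\times\reali_+;\reali\right)$, i.e.\ with the supremum over $u$ taken inside each increment. Under that hypothesis your two-term splitting closes immediately: each time increment is dominated by $\sup_{v\in\mathcal{U}}\norma{b(t_i,v)-b(t_{i-1},v)}_{\C0}$ regardless of the frozen level, so the time part sums to at most $\mathcal{C}$, the control part to at most $\mathcal{L}\,\tv(u)$, and no cancellation argument is needed. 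The correct completion of your proof is therefore a clarification of the hypothesis, not a finer chain rule; once that is granted, your argument and the paper's coincide. Your closing remark that the constant in~\eqref{eq:pdeStabControl} must not degenerate with $\tv(u)$ is a legitimate point which the paper likewise leaves unaddressed.
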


\noindent The proof is in Section~\ref{sec:T}.

\subsection{ODE -- Well Posedness}
\label{sub:ode}

We first present the approximation algorithm introduced
in~\cite{deRoos}, see~\cite{Linus, GwiazdaEtAl} for the present
simplified version. Fix a positive time $T$. For any $n \in \naturali
\setminus \{0\}$ and for the time step $\Delta t$, approximate the
initial datum $\mu_o$ in~\eqref{eq:5} by means of a linear combination
$\mu_0^n$ of Dirac deltas centered at $x_0^0, x_0^1, \ldots, x_0^n$
with masses $m_0^1, \ldots, m_0^n$ and approximate the initial datum
with the measure
\begin{displaymath}
  \mu_0^n = \sum_{i=1}^n m_0^i \; \delta_{x_0^i} \,.
\end{displaymath}
On the time interval $\left[0, \Delta t\right[$, we approximate the
solution to~\eqref{eq:5} with the measure
\begin{displaymath}
  \mu^n (t)
  =
  \sum_{i=1}^n m^i (t) \; \delta_{x^i (t)}
\end{displaymath}
where
\begin{equation}
  \label{eq:ode0}
  \left\{
    \begin{array}{rcl@{}l}
      \dot x^i & = &
      \left(b\left(t, \mu^n (t); u (t) \right)\right)(x^i)
      & i = 0, \ldots, n
      \\
      \dot m^0 & = &
      \displaystyle
      -
      c\left(t, \mu^n (t); u (t) \right) (x^0) \; m^0
      +
      \sum_{i=1}^n & \, \beta\left(t, \mu^n (t); u (t) \right) (x^i) \; m^i
      \\
      \dot m^i & = &
      -
      c\left(t, \mu^n (t); u (t) \right) (x^i) \; m^i
      & i = 1, \ldots, n
      \\
      x^i (0) & = & x^i_0
      & i = 0, \ldots, n
      \\
      m^0 (0) & = & 0
      \\
      m^i (0) & = & m^i_0
      & i = 1, \ldots, n
    \end{array}
  \right.
\end{equation}
Define $x^i_1 = \lim_{t \to \Delta t-} x^i (t)$ and $m^i_1 = \lim_{t
  \to \Delta t-} m^i (t)$ for $i=0, \ldots, n$.  Iteratively, for $k
\geq 1$, we prolong $\mu^n$, $x^{-k+1}, \ldots, x^n$ and $m^{-k+1},
\ldots, m^n$ on the interval $\left[k \, \Delta t, (k+1) \, \Delta
  t\right[$ solving
\begin{equation}
  \label{eq:odek}
  \left\{
    \begin{array}{@{}rcl@{}l@{}}
      \dot x^i & = &
      \left(b\left(t, \mu^n (t); u (t) \right)\right)(x^i)
      & i = -k, \ldots, n
      \\
      \dot m^{-k} & = &
      \displaystyle
      -
      c\left(t, \mu^n (t); u (t) \right) (x^{-k}) \; m^{-k}
      +
      \sum_{i=-k+1}^n & \! \beta\left(t, \mu^n (t); u (t) \right) (x^i) \; m^i
      \\
      \dot m^i & = &
      -
      c\left(t, \mu^n (t); u (t) \right) (x^i) \; m^i
      & i = -k+1, \ldots, n
      \\
      x^{i} (k \Delta t) & = & x^i_k & i = -k+1, \ldots, n
      \\
      m^{i} (k \Delta t) & = & m^i_k & i = -k+1, \ldots, n
      \\
      x^{-k} (k \Delta t) & = & 0
      \\
      m^{-k} (k \Delta t) & = & 0
    \end{array}
  \right.
\end{equation}
where $x^i_k = \lim_{t \to k\Delta t-} x^i (t)$, $m^i_k = \lim_{t \to
  k \Delta t-} m^i (t)$ for $i=0, \ldots, n$ and
\begin{displaymath}
  \mu^n (t)
  =
  \sum_{i=-k+1}^n m^i (t) \; \delta_{x^i (t)} \,.
\end{displaymath}

To describe the hypotheses on $b,c,\beta$ ensuring the well posedness
of~\eqref{eq:ode0}--\eqref{eq:odek} it is of use to introduce, for
positive $T$ and $L$, the set $\widetilde{\mathcal{F}}^u$ of functions
\begin{displaymath}
  f \colon
  [0,T] \times \mathcal{U}
  \to
  \C{0,1}(\mathcal{M}^+(\reali_+) \times \reali_+;\reali_+)
\end{displaymath}
such that
\begin{displaymath}
  \left(f (t; u)\right) (\mu, a)
  =
  \tilde f \left(
    t,
    \int_{\reali_+} \bar f (\alpha) \d{\mu(\alpha)}, a; u
  \right)
\end{displaymath}
where
\begin{enumerate}[$({\widetilde{\mathcal{F}}}_1^u)$]
\item The map $\bar f \in \C1(\reali_+; \reali_+)$ is bounded.
\item The map $(A,a;u) \to \tilde f (t,A,a;u)$ is in $\C1 (\reali_+
  \times \reali_+ \times \mathcal{U}; \reali_+)$ for a.e.~$t \in
  [0,T]$.
\item The map $t \to \tilde f (t, A,a;u)$ is in $\L\infty ([0,T];
  \reali_+)$ for all $A \in \reali_+$, $a \in \reali_+$ and $u \in
  \mathcal{U}$.
\item $\tilde f$ is Lipschitz continuous in $A,a,u$ uniformly in $t$:
  \begin{displaymath}
    \modulo{\tilde f (t, A_1,a_1;u_1) - \tilde f (t, A_2,a_2;u_2)}
    \leq
    L \left(
      \modulo{A_1 - A_2}
      +
      \modulo{a_1 - a_2}
      +
      \norma{u_1 - u_2}
    \right) \,.
  \end{displaymath}
\end{enumerate}

\noindent The next result ensures the well posedness of the Cauchy
Problem for the system of ordinary differential
equations~\eqref{eq:fi}--\eqref{eq:gi}.

\begin{theorem}
  \label{thm:WP}
  Fix $n, N \in \naturali \setminus \{0\}$, $T, L > 0$ and a compact
  subset $\mathcal{U}$ of $\reali^N$.  Let $b, c, \beta \in
  \tilde{\mathcal{F}}^u$. Then, for any control $u \in \BV ([0,T];
  \mathcal{U})$ and any initial datum $(x_0^1, \ldots, x_0^n) \in
  \reali_+^{n+1}$, $(m_0^1, \ldots, m_0^n) \in \reali^n$,
  problem~\eqref{eq:ode0}--\eqref{eq:odek} admits a unique solution $t
  \to (x,m)_u (t)$ defined for all $t \in [0,T]$. Moreover, the map $u
  \to (x,m)_u$ is in $\C1\bigl(\BV ([0,T]; \mathcal{U}); \C0 ([0,T];
  \reali_+^{n+1} \times \reali^n)\bigr)$.
\end{theorem}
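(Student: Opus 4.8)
The plan is to reduce the coupled measure--ODE system to a standard finite--dimensional Carathéodory system by exploiting the special structure of $\tilde{\mathcal{F}}^u$, and then to treat existence/uniqueness by a priori bounds and the $\C1$ dependence by differentiating the associated integral equation. First I would substitute the ansatz $\mu^n(t) = \sum_i m^i(t)\,\delta_{x^i(t)}$ into the coefficients. For $f \in \tilde{\mathcal{F}}^u$ this collapses the nonlocal term to $\int_{\reali_+}\bar f\,\d{\mu^n(t)} = \sum_i m^i(t)\,\bar f(x^i(t))$, so every right--hand side of~\eqref{eq:ode0}--\eqref{eq:odek} becomes an explicit expression such as $\tilde b\bigl(t,\sum_i m^i\bar b(x^i),x^j;u(t)\bigr)$, and similarly for $c,\beta$. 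On each interval $[k\Delta t,(k+1)\Delta t[$ the system thus takes the form $\frac{d}{dt}(x,m) = G_k\bigl(t,(x,m);u(t)\bigr)$. Since $\bar b,\bar c,\bar\beta \in \C1$ are bounded and $\tilde b,\tilde c,\tilde\beta$ are $\C1$ in $(A,a;u)$ and $L$--Lipschitz uniformly in $t$ by $(\tilde{\mathcal{F}}^u_2)$--$(\tilde{\mathcal{F}}^u_4)$, the map $(x,m)\mapsto G_k(t,(x,m);v)$ is locally Lipschitz with derivative bounded on bounded sets, while $t\mapsto G_k(t,\cdot)$ is measurable and bounded by $(\tilde{\mathcal{F}}^u_3)$ and $u\in\BV\subset\L\infty$. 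Hence each $G_k$ is a Carathéodory vector field.

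For existence and uniqueness I would apply the Carathéodory theory (the ODE results collected in the Appendix) on each subinterval, then derive a priori bounds to rule out blow--up: boundedness of $b$ forces $\dot x^i$ bounded, so the nodes stay in a fixed compact set and remain in $\reali_+$ (as $b \ge 0$); and $c \ge 0$ together with the linear mass equations gives $\modulo{m^i(t)} \le \modulo{m^i(k\Delta t)}\,e^{\int c}$ plus a bounded birth contribution, so the masses stay bounded. On the compact region so obtained $G_k$ is globally Lipschitz in $(x,m)$, which yields uniqueness via Gronwall. Matching the left limits $x^i_{k+1}, m^i_{k+1}$ across $t=k\Delta t$ and inserting the control--independent reset data $x^{-k}(k\Delta t)=0$, $m^{-k}(k\Delta t)=0$ prolongs the solution, and iterating over the finitely many $k$ gives the unique solution on all of $[0,T]$.

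For the $\C1$ dependence I would regard $\BV([0,T];\mathcal{U})$ as lying in the Banach space $\BV([0,T];\reali^N)$ and differentiate the integral form of the ODE. Because $G_k$ is $\C1$ in the state and in the control value, the candidate derivative $Y = D_u(x,m)[\delta u]$ solves the linear variational system $\dot Y = \partial_{(x,m)}G_k\,Y + \partial_v G_k\,\delta u(t)$ with $Y(0)=0$ and control--independent resets at the $k\Delta t$; this is again linear Carathéodory, hence uniquely solvable, and defines a bounded linear operator in $\delta u$. To confirm it is the Fréchet derivative I would estimate the remainder $(x,m)_{u+\delta u}-(x,m)_u-Y$ in integral form by Gronwall, using the a priori compactness, the uniform $\C1$ bounds on $G_k$, and $\norma{\delta u}_{\L1}\le T\,\norma{\delta u}_{\L\infty}\le T\,\norma{\delta u}_{\BV}$; the remainder is $o(\norma{\delta u}_{\L1})$, and since the $\BV$ norm dominates the $\L1$ norm this transfers to $o(\norma{\delta u}_{\BV})$. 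Continuity of $u\mapsto D_u(x,m)$ then follows from continuous dependence of the variational coefficients on $u$, again by Gronwall.

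The main obstacle I expect is precisely this last step: the control enters only through its pointwise--in--time values $u(t)$, so the natural remainder estimate lives in the $\L1$--in--time norm, and one must argue carefully that Fréchet differentiability with respect to this weaker norm upgrades to differentiability in the prescribed $\BV$ norm, while simultaneously checking that the discrete reset conditions at the times $k\Delta t$ carry control--independent data and hence do not destroy smoothness. Keeping the Gronwall constants uniform across the finitely many subintervals is the book--keeping that makes this delicate rather than merely routine.
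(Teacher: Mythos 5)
Your proposal is correct and follows essentially the same route as the paper: the paper likewise collapses the nonlocal terms into finite sums via the structure of $\tilde{\mathcal{F}}^u$ (this is the content of Lemma~\ref{lem:reg1}, which verifies measurability in $t$, $\C1$ regularity in $(x,m,u)$ and sublinearity of the resulting right-hand sides), and then invokes the Carath\'eodory ODE result of Lemma~\ref{lem:Calculus2} in the Appendix, which obtains the derivative with respect to the control exactly as you propose, via the linear variational equation and a Gronwall remainder estimate. Your closing worry about upgrading the $\L1$-in-time remainder estimate to the $\BV$ norm is legitimate and is in fact glossed over by the paper, whose Appendix lemma only establishes Gateaux differentiability in $\L\infty$ directions; your treatment is, if anything, slightly more careful on this point.
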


\noindent The proof directly follows from Lemma~\ref{lem:reg1} in
\S~\ref{sub:ode} and from the usual properties of the Nemitsky
operator.

\begin{theorem}
  \label{thm:Conv}
  Let $n \in \naturali \setminus \{0\}$, fix $T>0$ and a compact
  subset $\mathcal{U}$ of $\reali^N$.  Let $b, c, \beta \in
  \mathcal{F}^u$ and $u \in \BV ([0,T]; \mathcal{U})$.  Fix $\mu_o \in
  \mathcal {M}^+(\reali_+)$, $(x_o^1, \ldots, x_o^n) \in \reali_+^n$ ,
  $(m_o^1, \ldots, m_o^n) \in \reali_+^n$.  Let $\mu$ solve
  problem~\eqref{eq:5} in the sense of Definition~\ref{def:pdeSol} and
  $(m, x)$ solve problem~\eqref{eq:ode0}--\eqref{eq:odek} with time
  step $\Delta t$. Then, there exists a positive $C$ independent from
  $u$, $\Delta t$ and $n$ such that for all $t \in [0,T]$,
  \begin{displaymath}
    d\left(
      \mu(t),   \sum_{i=-n}^n m^i(t) \, \delta_{x^i(t)}
    \right)
    \leq
    C
    \cdot
    \left[
      \Delta t
      +
      d\left(\mu_o, \sum_{i=0}^n m_0^i \, \delta_{x_0^i}\right)
    \right] \,.
  \end{displaymath}
\end{theorem}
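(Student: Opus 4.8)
The plan is to view the EBT measure $\mu^n$ as a consistent and stable one--step approximation of the exact evolution and to pass from a local (one--step) truncation estimate to the global bound through a discrete Gronwall, or \emph{Lady Windermere}, argument, the stability being furnished by Corollary~\ref{cor:pdeWP}. First I would record uniform a~priori bounds on $\mu^n$. Since $b\ge0$ and $b\le\mathcal L$, each $\dot x^i=\left(b(t,\mu^n;u)\right)(x^i)\in[0,\mathcal L]$, so all supports remain in a fixed compact interval; since all masses stay nonnegative and $\frac{d}{dt}\sum_i m^i=\sum_i(\beta-c)(t,\mu^n;u)(x^i)\,m^i\le\mathcal L\sum_i m^i$, Gronwall bounds $\sum_i m^i(t)$ uniformly in $n$, $\Delta t$ and $t\in[0,T]$. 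These bounds, with $\modulo{\dot x^i}\le\mathcal L$ and $\modulo{\dot m^i}\le\mathcal L\sum_j m^j$, show that $t\mapsto\mu^n(t)$ is Lipschitz in the flat distance~\eqref{eq:d} with a constant depending only on $\mathcal L$ and the mass bound; the exact solution shares this property by Definition~\ref{def:pdeSol} and the same weak--form estimate.

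The core is the local consistency estimate. Write $t_k=k\,\Delta t$ and let $S$ denote the exact solution operator of~\eqref{eq:5} over $[t_k,t_{k+1}]$; I claim $d\bigl(S\,\mu^n(t_k),\mu^n(t_{k+1})\bigr)\le\O\,\Delta t^2$. The system~\eqref{eq:odek} integrates \emph{exactly} the transport--decay characteristics evaluated along $\mu^n$, so the only structural discrepancy with the exact dynamics is the birth term: over one step the newborn mass is $\int_{t_k}^{t_{k+1}}\sum_i\beta(t,\mu^n;u)(x^i)\,m^i\,\d t=\O\,\Delta t$, and whereas the exact solution injects it continuously at $a=0$, the scheme lumps it into the single cohort $m^{-k}$ located at $x^{-k}(t)\in[0,\mathcal L\,\Delta t]$. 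Both newborn layers are nonnegative, carry the same mass up to $\O\,\Delta t^2$, and are supported in $[0,\mathcal L\,\Delta t]$, so their flat distance is at most (mass)$\times$(width)$=\O\,\Delta t^2$. The further error from evaluating $b,c,\beta$ along $\mu^n$ instead of along $S\,\mu^n(t_k)$ is $\mathcal L$ times the flat distance of the two solutions, which equals $\O\,\Delta t$ on the step because both issue from $\mu^n(t_k)$ and are uniformly Lipschitz in time; integrated over $[t_k,t_{k+1}]$ this is again $\O\,\Delta t^2$.

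With the local estimate in hand I would close by stability. Put $e_k=d\bigl(\mu(t_k),\mu^n(t_k)\bigr)$. Applying the initial--data part of Corollary~\ref{cor:pdeWP} on $[t_k,t_{k+1}]$ (identical $b,c,\beta,u$, different data) together with the triangle inequality,
\[
e_{k+1}\le d\bigl(S\,\mu(t_k),S\,\mu^n(t_k)\bigr)+d\bigl(S\,\mu^n(t_k),\mu^n(t_{k+1})\bigr)\le e^{C\,\Delta t}\,e_k+\O\,\Delta t^2 ,
\]
where $C$ is the constant of Corollary~\ref{cor:pdeWP}, depending only on $\mathcal L,\mathcal C,T$. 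Iterating over $k=0,\dots,\lfloor t/\Delta t\rfloor$ and summing the resulting geometric series gives $e_k\le e^{C\,T}e_0+\O\,\Delta t\,\bigl(e^{C\,T}-1\bigr)$, that is $d\bigl(\mu(t),\mu^n(t)\bigr)\le C\,[\,\Delta t+e_0\,]$ with $e_0=d\bigl(\mu_o,\sum_{i=0}^n m_0^i\,\delta_{x_0^i}\bigr)$, which is the assertion. Uniformity of $C$ in $u$, $\Delta t$ and $n$ follows from the a~priori bounds and from the structure of the constant in Corollary~\ref{cor:pdeWP}.

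The main obstacle is the local consistency estimate, and specifically the gain of the extra power of $\Delta t$. Both halves of the one--step error must be genuinely quadratic: the frozen--coefficient part is quadratic only because the two one--step solutions, sharing the datum $\mu^n(t_k)$, stay within $\O\,\Delta t$ in flat distance --- which in turn rests on the uniform Lipschitz--in--time bound whose constant, crucially, depends on $\mathcal L$ and the mass bound but \emph{not} on the time regularity, so the mere $\BV$ dependence of $b,c,\beta$ on $t$ does no harm; the birth--lumping part is quadratic because the newborn layer has both mass and width of order $\Delta t$. Should the simultaneous coupling of birth and transport resist a direct one--step comparison, a fallback is to show instead that $\mu^n$ satisfies the weak formulation of Definition~\ref{def:pdeSol} up to a defect of order $\Delta t$ and to run the same Gronwall estimate against the exact solution through suitable test functions.
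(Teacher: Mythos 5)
Your proposal is correct and follows essentially the same strategy as the paper: uniform a priori bounds giving Lipschitz continuity of $\mu^n$ in the flat distance, a local consistency estimate in which the boundary-created mass (of order $\Delta t$, supported in a layer of width $\O\,\Delta t$) is lumped into the nearest Dirac delta at cost mass$\times$width, a coefficient-freezing error controlled by the Lipschitz dependence of $b,c,\beta$ on $\mu$, and propagation by the Lipschitz stability of the exact semiflow. The only difference is bookkeeping: you sum one-step errors over the partition $t_k=k\,\Delta t$ via a discrete Gronwall recursion, whereas the paper integrates the instantaneous defect rate $\liminf_{h\to0+}d\bigl(\mu^n_{\tau+h},S(h,\tau)\mu^n_\tau\bigr)/h$ using the semiflow error formula of Lemma~\ref{lem:semiflow}, together with Lemma~\ref{lem:ddeltas} for the flat distance between Dirac sums.
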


\noindent In specific numerical implementations of the present method,
the quantity $d \! \left(\mu_o, \sum_{i=0}^n m_0^i \,
  \delta_{x_0^i}\right)$ is typically of the same order of the size of
the space mesh $\Delta x$.

\subsection{Optimal Control}
\label{sub:control}

A general cost functional defined on the controls in $\BV([0,T];
\mathcal{U})$ is
\begin{equation}
  \label{eq:8}
  \begin{array}{ccccl}
    \widetilde{\mathcal{J}} & \colon & \BV([0,T]; \mathcal{U}) & \to & \reali
    \\
    & & u & \to &
    \displaystyle
    \int_0^{T} j \! \left(t,u(t),\int_{\reali_+}\gamma(\xi)
      \d{\mu_u(t)(\xi)}\right) \d{t}
  \end{array}
\end{equation}
where $\gamma \in \C{0,1} (\reali_+;\reali_+)$, $\mu_u$ is the
solution to~\eqref{eq:5} corresponding to the control $u$ with $b,
\beta, c$ and $\mu_o$ satisfying the assumptions of
Theorem~\ref{thm:pdeWP}, and $j \colon
[0,T]\times\mathcal{U}\times\reali_+\to \reali_+$ being such that:
\begin{enumerate}[$\bf(J_1)$]
\item $j \geq 0$
\item the map $t \to j(t,x,u)$ is measurable for all $x \in \reali_+$,
  $u \in \mathcal{U}$ and there exists a $\hat u \in \BV ([0,T];
  \mathcal{U})$ such that $\mathcal{J} (\hat u) < +\infty$
\item there exist $L \in \L1([0,T]; \reali_+)$ and a nondecreasing
  $\omega \in \C0(\reali_+; \reali_+)$, with $\omega(0) = 0$, such
  that
  \begin{displaymath}
    \modulo{j(t,x_1,u_1)-j(t,x_2,u_2)}
    \leq
    L(t) \;\; \omega \! \left(\modulo{x_1-x_2}+\modulo{u_1-u_2}\right)
  \end{displaymath}
  for a.e.~$t \in [0,T]$, for all $x_1,x_2 \in \reali_+$ and all $u_1,
  u_2 \in \mathcal{U}$.
\end{enumerate}
\noindent Having to consider also costs related to the adjustments in
the values of the control, it is natural to seek the minimization of
\begin{equation}
  \label{eq:9}
  \begin{array}{ccccl}
    \mathcal{J} & \colon  & \BV([0,T]; \mathcal{U}) & \to & \reali
    \\
    & & u & \to &
    \displaystyle
    \widetilde{\mathcal{J}} (u) + \tv_{\reali^N} (u) \,.
  \end{array}
\end{equation}

As a first result, we prove the existence of an optimal control.

\begin{theorem}
  \label{thm:optPDE}
  Fix $T>0$ and a compact subset $\mathcal{U}$ of $\reali^N$. For all
  $b, c, \beta \in \mathcal{F}^u$, $u \in \BV ([0,T];\mathcal{U})$ and
  $\mu_o \in \mathcal{M}^+ (\reali_+)$, let $\mu_u$ be the solution to
  problem~\eqref{eq:5}. With reference to the cost
  functional~\eqref{eq:8}, $\gamma\in \C{0,1}(\reali_+;\reali_+)$ and
  $j$ satisfies~$\bf(J_1)$, $\bf(J_2)$, $\bf(J_3)$. Then, there exists
  a control minimizing $\mathcal{J}$ as defined in~\eqref{eq:9}:
  \begin{displaymath}
    \exists \, u^* \in \BV ([0,T];\mathcal{U}) \; \colon \quad
    \mathcal{J}(u^*) = \inf_{u\in \BV ([0,T];\mathcal{U})} \mathcal{J}(u) \,.
  \end{displaymath}
\end{theorem}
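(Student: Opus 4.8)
The plan is to apply the \emph{direct method} of the calculus of variations. First I would record that the infimum is finite and nonnegative: by $\mathbf{(J_1)}$ one has $\widetilde{\mathcal{J}} \ge 0$, while $\mathbf{(J_2)}$ provides $\hat u \in \BV([0,T];\mathcal{U})$ with $\mathcal{J}(\hat u) < +\infty$, so $0 \le \inf \mathcal{J} < +\infty$. I then pick a minimizing sequence $(u_k)$, i.e.\ $\mathcal{J}(u_k) \to \inf \mathcal{J}$. Since $\widetilde{\mathcal{J}} \ge 0$, the definition~\eqref{eq:9} yields $\tv_{\reali^N}(u_k) \le \mathcal{J}(u_k)$, so the total variations are uniformly bounded; moreover each $u_k$ takes values in the compact set $\mathcal{U}$ and is therefore uniformly bounded.

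Next I would invoke Helly's selection theorem: from $(u_k)$ extract a subsequence (not relabelled) converging pointwise on $[0,T]$ to some $u^*$, with $\tv_{\reali^N}(u^*) \le \liminf_k \tv_{\reali^N}(u_k)$. As $\mathcal{U}$ is closed, $u^*(t) \in \mathcal{U}$ for every $t$, and the total-variation bound gives $u^* \in \BV([0,T];\mathcal{U})$. The lower semicontinuity of the total variation already handles the second summand of~\eqref{eq:9}; it then remains only to prove that $\widetilde{\mathcal{J}}$ is sequentially lower semicontinuous along this convergence, after which $\mathcal{J}(u^*) \le \liminf_k \widetilde{\mathcal{J}}(u_k) + \liminf_k \tv_{\reali^N}(u_k) \le \liminf_k \mathcal{J}(u_k) = \inf \mathcal{J}$, forcing $u^*$ to be optimal.

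For the lower semicontinuity of $\widetilde{\mathcal{J}}$ I would pass to the limit inside the time integral in~\eqref{eq:8}. Writing $A_u(t) = \int_{\reali_+}\gamma \,\d{\mu_u(t)}$, the Lipschitz continuity and local boundedness of $\gamma$ together with the definition~\eqref{eq:d} of the flat distance give $\modulo{A_{u_k}(t) - A_{u^*}(t)} \le \O\, d(\mu_{u_k}(t),\mu_{u^*}(t))$, so it suffices to establish $\mu_{u_k}(t) \to \mu_{u^*}(t)$ in the flat distance for a.e.~$t$. Granting this, assumption $\mathbf{(J_3)}$ together with $\omega(0)=0$ and the pointwise convergence $u_k(t)\to u^*(t)$ yields, for a.e.~$t$, the convergence $j(t,u_k(t),A_{u_k}(t)) \to j(t,u^*(t),A_{u^*}(t))$; since $j \ge 0$ by $\mathbf{(J_1)}$, Fatou's lemma then delivers $\widetilde{\mathcal{J}}(u^*) \le \liminf_k \widetilde{\mathcal{J}}(u_k)$. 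Using $j\ge0$ to invoke Fatou is deliberate, as it avoids constructing an $\L1$ dominating function.

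The main obstacle is exactly the flat convergence $\mu_{u_k}(t)\to\mu_{u^*}(t)$: Helly supplies only pointwise (hence, by boundedness, $\L1$) convergence of the controls, whereas the stated estimate~\eqref{eq:pdeStabControl} bounds $d(\mu_{u_k}(t),\mu_{u^*}(t))$ through $\O\,\sup_{s\in[0,T]}\norma{u_k(s)-u^*(s)}$, which need not tend to $0$. The gap is closed by employing the estimate in the integral (Gronwall/Duhamel) form underlying~\eqref{eq:pdeStabControl}, in which the control enters as $\int_0^t \norma{u_k(s)-u^*(s)}\,\d{s}$ rather than through its supremum, the sup-norm version being a coarser consequence. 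Then the uniform bound $\norma{u_k(s)-u^*(s)} \le \mathrm{diam}\,\mathcal{U}$ and the pointwise convergence let dominated convergence force $\int_0^t\norma{u_k(s)-u^*(s)}\,\d{s}\to0$, hence the required flat convergence for every $t \in [0,T]$. Combining the three steps gives $\mathcal{J}(u^*)=\inf_{u}\mathcal{J}(u)$, and $u^*$ is the sought minimizer.
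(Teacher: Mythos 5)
Your proposal is correct and follows the same skeleton as the paper: a minimizing sequence, the bound $\tv_{\reali^N}(u_k)\leq\mathcal{J}(u_k)$ coming from $\bf(J_1)$ and~\eqref{eq:9}, compactness via Helly's selection theorem, lower semicontinuity of the total variation, and lower semicontinuity of $\widetilde{\mathcal{J}}$ along the selected subsequence. The one place where you genuinely diverge is the last step, and it is worth comparing. The paper proves (Lemma~\ref{lem:lsemc}) that $\widetilde{\mathcal{J}}$ is sequentially \emph{continuous} with respect to the $\L\infty$-norm, using the hypothesis $d(\mu_{u_1},\mu_{u_2})\leq\mathcal{L}\,\norma{u_1-u_2}_{\L\infty}$ and pulling the modulus $\omega$ out of the time integral; it then applies this lemma to the Helly subsequence. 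But Helly only delivers pointwise (hence, by boundedness of $\mathcal{U}$, $\L1$) convergence, not $\L\infty$ convergence, so the paper's own chain of lemmas does not literally close -- this is precisely the obstacle you identify. Your resolution (return to the Gronwall/Duhamel form of the stability estimate, where the control discrepancy enters as $\int_0^t\norma{u_k(s)-u^*(s)}\,\d{s}$, then use dominated convergence to get flat convergence of $\mu_{u_k}(t)$ for every $t$, and conclude by $\bf(J_3)$ and Fatou with $j\geq0$) is the standard and correct way to patch this, at the price of having to verify that the integral form of~\eqref{eq:pdeStabControl} really is what the Gronwall argument in Theorem~\ref{thm:pdeWP} produces before the coarsening to $C\,t\,e^{Ct}\sup_t\norma{\cdot}$; that verification is routine but is an extra step not present in the paper. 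Your use of Fatou also only yields lower semicontinuity rather than continuity of $\widetilde{\mathcal{J}}$, which is all the direct method needs. In short: same architecture, but your treatment of the lower semicontinuity step is more careful than the paper's and repairs a genuine (if minor) gap in it.
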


We now pass to the discrete counterpart of Theorem~\ref{thm:optPDE},
substituting the evolution described by~\eqref{eq:5} with the
approximation provided by the Escalator Boxcar
Train~\eqref{eq:ode0}--\eqref{eq:odek}. At the same time, also the
functionals~\eqref{eq:ode0}--\eqref{eq:odek} have to be computed on
linear combination of Dirac deltas.

\begin{theorem}
  \label{thm:optODE}
  Fix $T>0$ and a compact subset $\mathcal{U}$ of $\reali^N$.  Let
  $b,c,\beta \in \mathcal{F}^u$ and $u \in \BV ([0,T];
  \mathcal{U})$. For any $n \in \naturali \setminus \{0\}$ and
  ${\Delta t}_n > 0$, fix an initial datum $(x_o^1, \ldots, x_o^n) \in
  \reali_+^{n+1}, (m_o^1, \ldots, m_o^n) \in \reali^n$
  in~\eqref{eq:ode0}--\eqref{eq:odek} and call $(x^{-n}, \ldots,
  x^n)$, $(m^{-n}, \ldots, m^n)$ the corresponding solution. Further,
  define the cost functionals
  \begin{eqnarray}
    \label{eq:10}
    & &
    \begin{array}{ccccl}
      \widetilde{\mathcal{J}}_n
      & \colon & \BV([0,T]; \mathcal{U}) & \to & \reali
      \\
      & & u & \to &
      \displaystyle
      \int_0^{T} j \! \left(t,u(t),\int_{\reali_+}\gamma(\xi)
        \d{\mu_u^n(t)(\xi)}\right) \d{t}
    \end{array}
    \\[4pt]
    \label{eq:11}
    & &
    \begin{array}{ccccl}
      \mathcal{J}_n & \colon  & \BV([0,T]; \mathcal{U}) & \to & \reali
      \\
      & & u & \to &
      \displaystyle
      \widetilde{\mathcal{J}}_n (u) + \tv_{\reali^N} (u) \,.
    \end{array}
  \end{eqnarray}
  where $\mu_u^n(t) = \sum_{i=-n}^n m^i(t) \, \delta_{x^i(t)}$,
  $\gamma\in (\C1 \cap \C{0,1})(\reali_+;\reali_+)$, $j$
  satisfies~$\bf(J_1)$, $\bf(J_2)$, $\bf(J_3)$ and there exists a
  $\hat u \in \BV ([0,T]; \mathcal{U})$ such that $\mathcal{J} (\hat
  u) < +\infty$. Then, there exists a control minimizing
  $\mathcal{J}_n$:
  \begin{displaymath}
    \exists u^*_n \in \BV ([0,T];\mathcal{U}) \; \colon \quad
    \mathcal{J}_n(u_n^*) = \inf_{u\in \BV ([0,T];\mathcal{U})} \mathcal{J}_n(u) \,.
  \end{displaymath}
\end{theorem}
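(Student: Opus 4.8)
The plan is to run the direct method of the calculus of variations, paralleling the proof of Theorem~\ref{thm:optPDE} but with the EBT evolution~\eqref{eq:ode0}--\eqref{eq:odek} in place of~\eqref{eq:5}. First I would verify that $m_n := \inf_u \mathcal{J}_n (u)$ is a finite nonnegative number. Nonnegativity follows from $\bf(J_1)$ and $\tv_{\reali^N} \geq 0$. For finiteness, observe that along the EBT all positions and masses stay bounded uniformly in the control: since $(\mathcal{F}^u_1)$ bounds $b,c,\beta$ by $\mathcal{L}$, one gets $x^i (t) \leq \max_i x_o^i + \mathcal{L}\, T$ and, by a Gronwall estimate on the mass equations, $\sum_i \modulo{m^i (t)} \leq C$, both uniformly in $u$ and $t$. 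Hence $G^n_u (t) := \int_{\reali_+} \gamma \, \d{\mu^n_u (t)} = \sum_i m^i (t)\, \gamma (x^i (t))$ ranges in a fixed compact set. Combining $\bf(J_3)$ with the hypothesis $\mathcal{J} (\hat u) < +\infty$, which makes $t \mapsto j (t, \hat u (t), \cdot)$ integrable for a bounded third argument, shows $\widetilde{\mathcal{J}}_n (u) < +\infty$ for every $u$, so $m_n \in [0, +\infty[$.

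Next, pick a minimizing sequence $(u_k)$, i.e.~$\mathcal{J}_n (u_k) \to m_n$. Since $\widetilde{\mathcal{J}}_n \geq 0$, the bound $\tv_{\reali^N} (u_k) \leq \mathcal{J}_n (u_k)$ keeps the total variations uniformly bounded, while $u_k (t) \in \mathcal{U}$ compact keeps the values uniformly bounded. Helly's selection theorem then provides a subsequence, still denoted $(u_k)$, converging pointwise on $[0,T]$ to some $u^* \in \BV ([0,T]; \mathcal{U})$, with the lower semicontinuity $\tv_{\reali^N} (u^*) \leq \liminf_k \tv_{\reali^N} (u_k)$.

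The crux is passing to the limit inside $\widetilde{\mathcal{J}}_n$. Here I cannot simply invoke the $\C1$ regularity of Theorem~\ref{thm:WP}, both because that result is stated for the structured class $\widetilde{\mathcal{F}}^u$ and because its continuity is with respect to the $\BV$ norm, whereas Helly delivers only pointwise convergence. Instead, I would prove directly that $(x,m)_{u_k} \to (x,m)_{u^*}$ uniformly on $[0,T]$. Writing~\eqref{eq:ode0}--\eqref{eq:odek} as $\dot y = F (t,y,u (t))$ with $y = (x,m)$, the field $F$ is Lipschitz in $y$ uniformly in $(t,u)$ --- the flat distance between atomic measures is controlled by $\sum_i \left( \modulo{m^i - \tilde m^i} + \modulo{\tilde m^i}\, \modulo{x^i - \tilde x^i} \right)$ and $b,c,\beta$ are Lipschitz in $(\mu,a)$ by $(\mathcal{F}^u_1)$ --- and Lipschitz in $u$ uniformly in $(t,y)$ by $(\mathcal{F}^u_3)$. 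A Gronwall estimate on each of the finitely many EBT subintervals $[k\,\Delta t_n, (k+1)\,\Delta t_n[$ then yields
\[
  \norma{(x,m)_{u_k} (t) - (x,m)_{u^*} (t)}
  \leq
  C \int_0^t \norma{u_k (s) - u^* (s)} \, \d{s} \,,
\]
and the right-hand side tends to $0$ by dominated convergence, since $u_k \to u^*$ pointwise and $\mathcal{U}$ is bounded. The particle born at each $t = k\,\Delta t_n$ carries the deterministic data $(0,0)$, independent of $u$, so the composition across subintervals is harmless.

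From this uniform convergence and the Lipschitz continuity of $\gamma$ it follows that $G^n_{u_k} \to G^n_{u^*}$ uniformly on $[0,T]$. Then $\bf(J_3)$ gives, for a.e.~$t$,
\[
  \modulo{j\bigl(t, u_k (t), G^n_{u_k} (t)\bigr) - j\bigl(t, u^* (t), G^n_{u^*} (t)\bigr)}
  \leq
  L (t)\, \omega\!\left( \norma{u_k (t) - u^* (t)} + \modulo{G^n_{u_k} (t) - G^n_{u^*} (t)} \right) \,,
\]
whose right-hand side tends to $0$ for a.e.~$t$ and is dominated by $L (t)\, \omega (C')$ with $L \in \L1$; dominated convergence then gives $\widetilde{\mathcal{J}}_n (u_k) \to \widetilde{\mathcal{J}}_n (u^*)$. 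Combining with the lower semicontinuity of the total variation,
\[
  \mathcal{J}_n (u^*)
  =
  \widetilde{\mathcal{J}}_n (u^*) + \tv_{\reali^N} (u^*)
  \leq
  \lim_k \widetilde{\mathcal{J}}_n (u_k) + \liminf_k \tv_{\reali^N} (u_k)
  =
  \liminf_k \mathcal{J}_n (u_k)
  =
  m_n \,,
\]
so $u^*$ is the desired minimizer. The only genuinely delicate step is the continuous dependence of the EBT on $u$ under mere pointwise convergence of controls; the rest is the standard direct method.
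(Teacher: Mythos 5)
Your proof is correct and its skeleton -- minimizing sequence, uniform $\tv$ bound from $\bf(J_1)$ and $\mathcal{J}_n(\hat u)<+\infty$, Helly's selection, lower semicontinuity of $\tv_{\reali^N}$, continuity of the running cost -- is exactly the direct method the paper uses: its proof of this theorem is literally ``the same lines as Theorem~\ref{thm:optPDE}''. The genuine difference is in how the continuity of $\widetilde{\mathcal{J}}_n$ along the minimizing sequence is obtained. The paper routes this through Lemma~\ref{lem:lsemc}, whose hypothesis is a Lipschitz estimate $d(\mu_{u_1},\mu_{u_2})\leq \mathcal{L}\,\norma{u_1-u_2}_{\L\infty}$ and whose conclusion is continuity with respect to $\L\infty$ convergence of controls; for the discrete dynamics one would have to supply the analogous Lipschitz dependence of $\mu^n_u$ on $u$, which the paper leaves implicit. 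You instead prove directly that $(x,m)_{u_k}\to(x,m)_{u^*}$ uniformly via a Gronwall estimate on each EBT subinterval, using the Lipschitz continuity of $b,c,\beta$ in $(\mu,a)$ together with Lemma~\ref{lem:ddeltas} and $(\mathcal{F}^u_3)$, and then pass to the limit in the cost by dominated convergence under mere pointwise convergence of $u_k$. This buys you something real: Helly delivers only pointwise a.e.\ convergence, not $\L\infty$ convergence, so the paper's appeal to Lemma~\ref{lem:lsemc} as stated has a small mismatch that your dominated-convergence argument avoids; the price is that you must (and do) check the a priori bounds on masses and positions so that the Gronwall constant is uniform in $u$, and that the particles injected at the times $k\,{\Delta t}_n$ carry $u$-independent data so the estimate composes across subintervals. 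Both routes are sound; yours is the more self-contained for the ODE setting.
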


The above theorems yield the following corollary, which is the main
result of the present work. It ensures that the Escalator Boxcar Train
algorithm can also be used to solve optimal control problems.

\begin{corollary}
  \label{cor:Final}
  With the same assumptions and notation as in
  Theorem~\ref{thm:optPDE} and in Theorem~\ref{thm:optODE}, if
  \begin{displaymath}
    \lim_{n \to +\infty} {\Delta t}_n =0
    \qquad \mbox{ and } \qquad
    \lim_{n \to +\infty}
    d\left(
      \mu_o,
      \sum_{i=-n}^n m_o^i(t) \delta_{x_o^i(t)}
    \right)
    =
    0
  \end{displaymath}
  then,
  \begin{equation}
    \label{eq:limJ}
    \lim_{n\to +\infty} \mathcal{J}_n (u^*_n)
    =
    \inf_{u\in \BV ([0,T];\mathcal{U})} \mathcal{J}(u)
  \end{equation}
  and, up to a subsequence,
  \begin{equation}
    \label{eq:infJ}
    \lim_{n \to +\infty} \norma{u^*_n - u^*}_{\L\infty ([0,T]; \reali)} = 0
    \quad \mbox{ where } \quad
    \mathcal{J} (u^*)
    =
    \inf_{u\in \BV ([0,T];\mathcal{U})} \mathcal{J}(u) \,.
  \end{equation}
\end{corollary}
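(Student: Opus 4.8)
The plan is to lift the pointwise (in the control) convergence of the Escalator Boxcar Train, furnished by Theorem~\ref{thm:Conv}, to a \emph{uniform} convergence of the cost functionals $\mathcal{J}_n \to \mathcal{J}$, and then to run the two standard steps of a $\Gamma$-convergence-type argument: a sandwich estimate for the optimal values giving~\eqref{eq:limJ}, and a compactness-plus-lower-semicontinuity argument identifying a limit of the discrete minimizers with a minimizer of $\mathcal{J}$, giving~\eqref{eq:infJ}.

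First I would establish uniform convergence of the functionals. Fix $u \in \BV([0,T];\mathcal{U})$. Theorem~\ref{thm:Conv} gives, uniformly in $t \in [0,T]$ and in $u$ (the constant $C$ being independent of both),
\[
  d\bigl(\mu_u(t), \mu_u^n(t)\bigr)
  \leq
  C\left[\Delta t_n + d\Bigl(\mu_o, \sum_{i=0}^n m_o^i\, \delta_{x_o^i}\Bigr)\right]
  =: \epsilon_n ,
\]
with $\epsilon_n \to 0$ by hypothesis. Since $\gamma$ is Lipschitz (and bounded on the bounded supports at play), the definition~\eqref{eq:d} of the flat distance controls $\modulo{\int_{\reali_+} \gamma\,\d{\mu_u(t)} - \int_{\reali_+}\gamma\,\d{\mu_u^n(t)}}$ by $C_\gamma\, \epsilon_n$. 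Feeding this into assumption $\bf(J_3)$ and integrating in time gives
\[
  \modulo{\widetilde{\mathcal{J}}(u) - \widetilde{\mathcal{J}}_n(u)}
  \leq
  \norma{L}_{\L1}\; \omega\bigl(C_\gamma\, \epsilon_n\bigr)
  =: \eta_n ,
\]
with $\eta_n \to 0$ since $\omega$ is continuous and $\omega(0)=0$. As $\mathcal{J}$ and $\mathcal{J}_n$ differ from $\widetilde{\mathcal{J}}$ and $\widetilde{\mathcal{J}}_n$ only by the common term $\tv_{\reali^N}(u)$, we obtain $\modulo{\mathcal{J}(u)-\mathcal{J}_n(u)} \leq \eta_n$ for every $u$.

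The value convergence~\eqref{eq:limJ} is then immediate: writing $I=\inf_u\mathcal{J}(u)$ and $I_n=\mathcal{J}_n(u_n^*)=\inf_u\mathcal{J}_n(u)$, the uniform bound gives $I_n \leq I+\eta_n$ and $I \leq I_n+\eta_n$, hence $\modulo{I_n-I}\leq \eta_n \to 0$. For~\eqref{eq:infJ} I would first bound the total variation of the minimizers: choosing the reference control $\hat u$ of $\bf(J_2)$, $\mathcal{J}_n(u_n^*) \leq \mathcal{J}_n(\hat u) \leq \mathcal{J}(\hat u)+\eta_n$ is bounded uniformly in $n$, and since $\widetilde{\mathcal{J}}_n \geq 0$ by $\bf(J_1)$ this forces $\tv_{\reali^N}(u_n^*)\leq \mathcal{J}_n(u_n^*)\leq K$. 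With values in the compact set $\mathcal{U}$ and uniformly bounded total variation, Helly's theorem extracts a subsequence (not relabelled) converging a.e.\ to some $u^* \in \BV([0,T];\mathcal{U})$. Continuity of the solution map $u\mapsto\mu_u$ from Corollary~\ref{cor:pdeWP} then transfers this to $\int_{\reali_+}\gamma\,\d{\mu_{u_n^*}(t)} \to \int_{\reali_+}\gamma\,\d{\mu_{u^*}(t)}$ pointwise in $t$, and dominated convergence (dominating function $L\in\L1$) together with $\bf(J_3)$ gives $\widetilde{\mathcal{J}}(u_n^*)\to\widetilde{\mathcal{J}}(u^*)$. Combined with the lower semicontinuity of the total variation and with the first step, this yields $\mathcal{J}(u^*)\leq\liminf_n\mathcal{J}_n(u_n^*)=I$, so that $\mathcal{J}(u^*)=I$ and $u^*$ is optimal.

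The main obstacle is the compatibility of topologies in this last step. Helly's theorem only delivers a.e.\ (equivalently $\L1$) convergence of the controls, whereas the stability estimate~\eqref{eq:pdeStabControl} is stated with $\sup_t\norma{u_1(t)-u_2(t)}$. The point to verify carefully is therefore that the dependence of $\mu_u$ on the control is in fact continuous for the weaker convergence supplied by Helly: since $u$ enters the coefficients $b,c,\beta$ only through their time integrals along the Gronwall estimate, the $\sup_t$ in~\eqref{eq:pdeStabControl} may be replaced by $\norma{u_1-u_2}_{\L1}$, and it is this integral form that renders the passage to the limit legitimate and, through the uniform boundedness of $\mathcal{U}$, delivers the convergence asserted in~\eqref{eq:infJ}.
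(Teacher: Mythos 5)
Your proposal follows essentially the same route as the paper: uniform convergence $\sup_u\modulo{\mathcal{J}_n(u)-\mathcal{J}(u)}\to 0$ via Theorem~\ref{thm:Conv}, $\bf(J_3)$ and the modulus $\omega$; the sandwich argument for~\eqref{eq:limJ}; and the $\tv$ bound, Helly compactness and lower semicontinuity for~\eqref{eq:infJ}. The only divergence is in the final step: the paper invokes Lemma~\ref{lem:lsemc}, whose hypothesis is $\L\infty$-convergence of the controls, whereas you argue directly by dominated convergence from the a.e.\ convergence that Helly actually provides, and you explicitly flag the resulting topology mismatch --- Helly delivers pointwise a.e.\ (hence $\L1$) convergence, while the stability estimate~\eqref{eq:pdeStabControl} and Lemma~\ref{lem:lsemc} are phrased with $\sup_t\norma{u_1(t)-u_2(t)}$. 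This is a genuine subtlety that the paper glosses over (its Proposition~\ref{prop:helly} asserts $\norma{u_{n_k}-u}\to 0$ in an unspecified norm, and the proof of Corollary~\ref{cor:Final} passes from ``$u_n\to\bar u$ a.e.'' to an application of Lemma~\ref{lem:lsemc} without comment). Your proposed repair --- that the control enters the Gronwall estimate only through time integrals of the coefficients, so the $\sup_t$ in~\eqref{eq:pdeStabControl} can be weakened to an $\L1$ norm --- is the right instinct and is what actually makes the limit passage legitimate, but as written it remains a statement of what must be verified rather than a verification: it requires reopening the proof of Theorem~\ref{thm:pdeWP} (i.e.\ the cited stability estimate of Carrillo--Colombo--Gwiazda--Ulikowska) to confirm that the perturbation of $b,c,\beta$ indeed enters there in integrated form. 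With that one step carried out, your argument is complete and, on this point, more careful than the paper's.
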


\section{The McKendrick -- Von F\"orster Model in Welfare Policies}
\label{sec:E}

The McKendric -- Von F\"orster model for population growth, equipped
with an integral functional to be maximized, provides a first example
of a system fitting within~\eqref{eq:5}, where the results in the
sections~\ref{sub:pde} and~\ref{sub:ode} can be applied.

Consider a population described by the amount $n = n (t,a)$ of people
that at time $t$ have the age $a$. Call $-d$, with $d = d (a)$, the
population mortality rate. We thus obtain:
\begin{displaymath}
  \left\{
    \begin{array}{l}
      \partial_t n + \partial_a n = -d (a) \, n
      \\
      \displaystyle
      n (t,0) = \int_0^{+\infty} \tilde\beta (a) \; n (t,a) \d{a}
      \\
      n (0,x) = n_o (x) \,.
    \end{array}
  \right.
\end{displaymath}
Here, $\tilde\beta$ describes the natality rate of the population of
age $a$ at time $t$.

Introduce a policy to sustain birth rate. It is then natural to assume
that a control parameter, say $u$, enters the birth functions. The
parameter $u$, possibly vector valued, reflects a government policy to
foster natality, helping through \emph{ad hoc} acts the families with
children.
\begin{equation}
  \label{eq:1}
  \left\{
    \begin{array}{l}
      \partial_t n_u + \partial_a n_u = -d (a) \, n_u
      \\
      \displaystyle
      n_u (t,0) = \int_0^{+\infty} \tilde\beta (a,u) \; n_u (t,a) \d{a}
      \\
      n_u (0,x) = n_o (x) \,.
    \end{array}
  \right.
\end{equation}
From the governmental point of view, the income of the state welfare
can be described by the functional
\begin{equation}
  \label{eq:J}
  \mathcal{J} (u)
  =
  \int_0^{+\infty} e^{-\lambda \, t}
  \left(
    \int_0^{+\infty} w (a) \; n_u (t,a) \d{a}
    - u (t) \, n_u (t,0)
  \right)  \d{t} \,.
\end{equation}
The weight $w = w (a)$ is positive all through the active age
interval, i.e., all during the period where individuals, paying taxes,
sustain the state. On the contrary, $w$ is negative when individuals
receive services from the state, e.g., during childhood and
retirement.

\begin{lemma}
  \label{lem:ex1}
  Fix a compact $\mathcal{U}$ in $\reali^N$ and $\alpha \in \left]0, 1
  \right[$. System~\eqref{eq:1} fits
  into~\eqref{eq:ode0}-\eqref{eq:odek} setting
  \begin{displaymath}
    b (t,\mu,u) (a)= 1
    \,,\qquad
    c (t,\mu,u) (a)= d (a)
    \,,\qquad
    \beta (t,\mu,u) (a) = \tilde \beta (a,u) \,.
  \end{displaymath}
  Moreover, if
  \begin{displaymath}
    d \in \C{0,1} (\reali_+, \reali)
    \,,\qquad \tilde\beta \in \C{0,1} (\mathcal{U} \times \reali_+; \reali)
  \end{displaymath}
  then, for all $u \in \Cb{1,\alpha} ([0,T];\mathcal{U})$,
  Theorem~\ref{thm:WP} applies.
\end{lemma}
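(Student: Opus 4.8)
The plan is to proceed in two stages: first confirm that the choices $b\equiv1$, $c=d$, $\beta=\tilde\beta$ turn the Escalator Boxcar Train~\eqref{eq:ode0}--\eqref{eq:odek} into the discretization of~\eqref{eq:1}, and then verify that these three coefficients belong to $\widetilde{\mathcal{F}}^u$, so that Theorem~\ref{thm:WP} can be invoked. The substitution is immediate: inserting $b\equiv1$ gives $\dot x^i=1$, so the quadrature nodes travel along the straight characteristics $x^i(t)=x^i_k+(t-k\,\Delta t)$ on each boxcar interval, independently of $m$ and $u$; inserting $c=d$ and $\beta=\tilde\beta$ turns the mass equations into the linear system $\dot m^i=-d(x^i)\,m^i$ for the interior cohorts and $\dot m^{-k}=-d(x^{-k})\,m^{-k}+\sum_i\tilde\beta(x^i,u)\,m^i$ for the boundary cohort, which is exactly the EBT discretization of the renewal boundary condition of~\eqref{eq:1} along the characteristics $\d{a}/\d{t}=1$.

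For the membership in $\widetilde{\mathcal{F}}^u$ I would exploit that none of the three coefficients depends on the measure $\mu$. Hence in the representation $\left(f(t;u)\right)(\mu,a)=\tilde f\bigl(t,\int_{\reali_+}\bar f\,\d{\mu},a;u\bigr)$ I take $\bar f$ to be a constant, in particular a bounded $\C1$ map so that $(\widetilde{\mathcal{F}}_1^u)$ holds, and set $\tilde b\equiv1$, $\tilde c(t,A,a;u)=d(a)$ and $\tilde\beta(t,A,a;u)=\tilde\beta(a,u)$, all independent of $t$. Condition $(\widetilde{\mathcal{F}}_3^u)$ is then trivial, while $(\widetilde{\mathcal{F}}_4^u)$ follows at once from $d\in\C{0,1}(\reali_+;\reali)$ and $\tilde\beta\in\C{0,1}(\mathcal{U}\times\reali_+;\reali)$: the uniform Lipschitz constant $L$ may be taken as the larger of the two Lipschitz constants, the $A$-direction contributing nothing.

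The delicate point, and the one I expect to be the main obstacle, is condition $(\widetilde{\mathcal{F}}_2^u)$, which demands $\C1$ regularity of $(A,a;u)\mapsto\tilde f(t,A,a;u)$, whereas $d$ and $\tilde\beta$ are assumed only Lipschitz. The $A$-variable is harmless, but the bare composition $a\mapsto d(a)$ need not be differentiable. Here I would use precisely the pure-transport structure forced by $b\equiv1$: because the characteristics are affine, the cohort masses can be written in closed form, the interior ones as $m^i(t)=m^i_k\exp\!\left(-\int_{x^i_k}^{x^i_k+(t-k\Delta t)}d(\sigma)\,\d{\sigma}\right)$ and the boundary one through a Duhamel term built from $\int\tilde\beta(\sigma,u)\,\d{\sigma}$ along the same characteristics. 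Integration along the characteristics regularizes in the spatial variable: the antiderivative $x\mapsto\int_x^{x+\tau}d(\sigma)\,\d{\sigma}$ is $\C1$ with continuous derivative $d(x+\tau)-d(x)$ even when $d$ is only Lipschitz, so the flow is $\C1$ in the nodes $x^i_k$.

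The crux of the argument is then reconciling the merely Lipschitz dependence of $\tilde\beta$ on the control with the $\C1$-in-$u$ conclusion of Theorem~\ref{thm:WP}. Since the time integration in the Duhamel term does not by itself smooth the pointwise dependence on $u$, this is where the assumption $u\in\Cb{1,\alpha}([0,T];\mathcal{U})$ must be brought in: the Hölder-$\C1$ regularity in time supplies the extra control needed to differentiate the Duhamel term through the Nemitsky operator exactly as in Lemma~\ref{lem:reg1}. Since $\Cb{1,\alpha}([0,T];\mathcal{U})\subset\BV([0,T];\mathcal{U})$, the existence and uniqueness part of Theorem~\ref{thm:WP} transfers directly to~\eqref{eq:1}; the $\C1$ dependence of $(x,m)_u$ on $u$ is the step to treat with care, combining the spatial smoothing along characteristics with the control regularity, and it is this combination that I regard as the heart of the proof.
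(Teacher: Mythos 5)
Your reduction of~\eqref{eq:1} to~\eqref{eq:ode0}--\eqref{eq:odek}, and your verification of $(\widetilde{\mathcal{F}}^u_1)$, $(\widetilde{\mathcal{F}}^u_3)$, $(\widetilde{\mathcal{F}}^u_4)$ with $\bar f$ constant, $\tilde b\equiv 1$, $\tilde c(t,A,a;u)=d(a)$ and $\tilde\beta(t,A,a;u)=\tilde\beta(a,u)$, are correct and are exactly the intended reading: the paper gives no written proof of this lemma, treating the verification as routine and simply displaying the resulting decoupled ODE system. You are also right that $\Cb{1,\alpha}([0,T];\mathcal{U})\subset\BV([0,T];\mathcal{U})$, so the control is admissible.

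The gap is in your treatment of $(\widetilde{\mathcal{F}}^u_2)$, which you correctly identify as the obstruction but do not resolve. Two things go wrong. First, Theorem~\ref{thm:WP} (through Lemma~\ref{lem:reg1} and Lemma~\ref{lem:Calculus2}) is stated as hypotheses on the coefficients $b,c,\beta$; one cannot invoke it by observing that the explicit solution formula is smooth in the nodes --- at best that would be a direct re-proof of its conclusion, which you do not carry out. Second, and more seriously, the smoothing you appeal to acts in the wrong variable: integration along characteristics regularizes the dependence on $a$ and on the initial nodes, but the $\C1$ dependence of $u\mapsto(x,m)_u$ hinges on differentiability of $\tilde\beta$ \emph{in $u$}, since the only $u$-dependent quantity is the boundary cohort
$m^{-k}(t)=\int K(t,s)\sum_i\tilde\beta\bigl(x^i(s),u(s)\bigr)m^i(s)\,\d{s}$.
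The Gateaux derivative of this Nemitsky-type operator in a direction $v$ must involve $\partial_u\tilde\beta$, and no amount of time-regularity of $u$ creates that derivative when $\tilde\beta$ is merely Lipschitz: take $\tilde\beta(a,u)=\modulo{u}$ with $0\in\mathcal{U}$, for which $u\mapsto\int_0^t\modulo{u(s)}\d{s}$ fails to be differentiable at $u=0$ even along constant controls. So the claim that the $\Cb{1,\alpha}$ hypothesis ``supplies the extra control needed to differentiate the Duhamel term'' is unsubstantiated and false in general. The honest conclusions are either (i) strengthen the hypotheses to $d\in(\C1\cap\C{0,1})(\reali_+;\reali)$ and $\tilde\beta\in(\C1\cap\C{0,1})(\mathcal{U}\times\reali_+;\reali)$, after which all four conditions defining $\widetilde{\mathcal{F}}^u$ are immediate, or (ii) under the stated Lipschitz hypotheses claim only the existence--uniqueness part of Theorem~\ref{thm:WP}, which requires nothing beyond standard Caratheodory/Lipschitz ODE theory.
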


In the present case, equations~\eqref{eq:ode0}--\eqref{eq:odek} take
the form, for $t \in [0, \Delta t]$
\begin{displaymath}
  \left\{
    \begin{array}{rcl@{\qquad}rcl}
      \dot x^i & = & 1
      &
      i & = & 0, \ldots, n
      \\
      \dot m^0 & = &
      \displaystyle
      -d (x^0) \, m^0 + \sum_{i=0}^n \tilde \beta \left(x^i,u (t)\right) m^i
      \\
      \dot m^i & = & -d (x^i) \, m^i
      &
      i & = & 1, \ldots, n
      \\
      x^i (0) & = & x^i_o & i & = &0, \ldots, n
      \\
      m^i (0) & = & m^i_o & i & = &0, \ldots, n
    \end{array}
  \right.
\end{displaymath}
while for $t \in [k\, \Delta t, (k+1) \Delta t]$ the solution to the
above system is extended as follows
\begin{displaymath}
  \left\{
    \begin{array}{rcl@{\qquad}rcl}
      \dot x^i & = & 1
      &
      i & = &-k, \ldots, n
      \\
      \dot m^{-k} & = &
      \displaystyle
      -d (x^{-k}) \, m^{-k}
      +
      \sum_{i=-k}^n \tilde \beta \left(x^i,u (t)\right) m^i
      \\
      \dot m^i & = & -d (x^i) \, m^i
      &
      i & = & -k+1, \ldots, n
      \\
      x^{-k} (k \, \Delta t) & = &0
      \\
      m^{-k} (k \, \Delta t) & = &0 \,.
    \end{array}
  \right.
\end{displaymath}
Note that the variables $x^i$ decouple and it is immediate to obtain
\begin{displaymath}
  x^i (t) = t - i\, \Delta t
  \qquad \mbox{ for } \quad t \geq \min \{-i\, \Delta t, 0\}
  \quad \mbox{ and } \quad  i = -k, \ldots,n\,.
\end{displaymath}
The discretized version of the cost functional~\eqref{eq:J} is
\begin{displaymath}
  \mathcal{J}^n (u)
  =
  \sum_{k =0}^{+\infty}
  \int_{k\, \Delta t}^{(k+1)\Delta t}
  e^{-\lambda\, t}
  \left(
    \sum_{i=-k+1}^n
    w(t - i\, \Delta t) \, m_u^i (t) - u (t) \, m_u^{-k} (t)
  \right)
  \d{t}
\end{displaymath}

\section{Technical Details}
\label{sec:T}

\subsection{Proofs Related to Section~\ref{sub:pde}}

\begin{lemma}
  \label{lem:BV}
  Fix $T>0$ and a normed space $X$. Let $x \in \BV([0,T];X)$. Then,
  for any $\epsilon>0$, there exists $n \in \naturali$, $\{t_1, t_2,
  \ldots, t_n\} \subset [0,T]$ and $\{x_1, x_2, \ldots, x_n\} \in X$
  such that, setting $x_\epsilon (t) = \sum_{i=1}^n x_i \,
  \chi_{\strut [t_{i-1}, t_i[} (t)$,
  \begin{displaymath}
    \sup_{t\in [0,T]}
    \norma{x (t) - x_\epsilon (t)}_{X}
    \leq \epsilon
    \,,\quad
    x_\epsilon ([0,T]) \subseteq x ([0,T])
    \quad \mbox{ and } \quad
    \tv_{X} (x_\epsilon) \leq \tv_{X} (x) \,.
  \end{displaymath}
\end{lemma}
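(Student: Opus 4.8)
The plan is to produce $x_\epsilon$ by sampling $x$ at finitely many points, one inside each subinterval of a well-chosen partition, so that all three requested properties follow from the single requirement that $x$ oscillate by at most $\epsilon$ on each subinterval. The engine of the construction is the variation function $V(t) = \tv_X (x|_{[0,t]})$, which is nondecreasing on $[0,T]$ with $V(T) = \tv_X (x) < +\infty$ and satisfies the elementary bound $\norma{x (s') - x (s)}_X \leq V (s') - V (s)$ for $s \leq s'$.

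First I would build the partition $0 = t_0 < t_1 < \cdots < t_n = T$ greedily: given $t_{i-1}$, set $t_i = \sup\{\tau \in [t_{i-1},T] \colon V (\tau) - V (t_{i-1}) \leq \epsilon\}$. Since $V$ is monotone, $V (t_i^-) - V (t_{i-1}) \leq \epsilon$, so the oscillation of $x$ over the half-open interval $[t_{i-1}, t_i[$ is at most $\epsilon$; on the final, closed interval one has instead $V (T) - V (t_{n-1}) \leq \epsilon$. To see that the process stops after finitely many steps, I would note that whenever $t_i < T$ the definition of the supremum forces $V (t_i^+) - V (t_{i-1}) \geq \epsilon$; grouping the intervals in pairs then exhibits disjoint increments of $V$ each of size at least $\epsilon$, whence $n \leq 2\, \tv_X (x)/\epsilon + 1$.

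Next I would choose a sample point $s_i \in [t_{i-1}, t_i[$ in each interval (allowing $s_n \in [t_{n-1}, T]$), set $x_i = x (s_i)$, and define $x_\epsilon (t) = \sum_{i=1}^n x_i \, \chi_{[t_{i-1}, t_i[} (t)$. The three conclusions then fall out directly. The inclusion $x_\epsilon ([0,T]) \subseteq x ([0,T])$ is immediate, each $x_i$ being a genuine value of $x$. The uniform bound holds because, for $t$ in the $i$-th interval, both $t$ and $s_i$ lie in that interval, so $\norma{x (t) - x_i}_X$ is controlled by the oscillation there, hence by $\epsilon$. Finally, the sample points satisfy $s_1 < s_2 < \cdots < s_n$, so the step function $x_\epsilon$ jumps only at $t_1, \ldots, t_{n-1}$ and $\tv_X (x_\epsilon) = \sum_{i=2}^n \norma{x (s_i) - x (s_{i-1})}_X$; this is a variation sum of $x$ over an increasing set of nodes and is therefore bounded by $\tv_X (x)$.

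The only delicate point is the bookkeeping around the jumps of $x$: the greedy rule must be read through $V (t_i^-)$ rather than $V (t_i)$ so that the half-open interval keeps oscillation below $\epsilon$ even when $x$ (and hence $V$) jumps at $t_i$, while the termination count must charge such a jump to the increment $V (t_i^+) - V (t_{i-1})$ straddling the partition point. Once this half-open versus closed distinction is handled consistently, no regularity of $x$ beyond the finiteness of $\tv_X (x)$ is required.
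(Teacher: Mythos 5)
Your overall strategy --- controlling the oscillation of $x$ on each cell of a partition through the variation function $V$ and then sampling one value per cell --- is sound, and it is essentially the content of the Amann--Escher theorem that the paper simply cites in place of a proof. However, your greedy construction has a genuine gap: it can stall. If $x$ has a right-discontinuity of size larger than $\epsilon$ at some node $t_{i-1}$, so that $V(t_{i-1}^+) - V(t_{i-1}) > \epsilon$, then the set $\{\tau \in [t_{i-1},T] \colon V(\tau) - V(t_{i-1}) \le \epsilon\}$ reduces to the singleton $\{t_{i-1}\}$, hence $t_i = t_{i-1}$, the cell $[t_{i-1}, t_i[$ is empty, and every subsequent step returns the same point: the process never reaches $T$. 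A concrete instance is $x = \chi_{]1/2,1]}$ with $\epsilon < 1$. Your closing remark about reading the rule through $V(t_i^-)$ handles a jump of $x$ \emph{at} the right endpoint $t_i$ (so that $[t_{i-1},t_i[$ keeps small oscillation), but not the right-jump just after the \emph{left} endpoint $t_{i-1}$, which is what causes the stall; likewise both the termination count via $V(t_i^+)-V(t_{i-1}) \ge \epsilon$ and the pairing argument presuppose that consecutive cells are nondegenerate.

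There is a second, related issue: at such a point no choice of sample values can rescue the conclusion in the literal form stated, because any function of the form $\sum_i x_i \, \chi_{[t_{i-1},t_i[}$ is right-continuous, whereas for $x = \chi_{]1/2,1]}$ one has $\norma{x(1/2)-y} + \lim_{s \downarrow 1/2}\norma{x(s)-y} \ge 1$ for every $y$, so no right-continuous step function lies uniformly within $\epsilon$ of $x$ once $\epsilon < 1/2$. The statement must therefore be read, as in the cited reference on staircase approximation of regulated functions, as allowing the approximant to take its own value at each node, i.e.\ admitting degenerate singleton cells. The repair is to advance the greedy rule from $V(t_{i-1}^+)$ rather than from $V(t_{i-1})$ (which forces $t_i > t_{i-1}$ strictly and restores the counting bound $n \le \tv_X(x)/\epsilon + 1$), to control the oscillation only on the open cells $]t_{i-1},t_i[$, and to set $x_\epsilon(t_i) = x(t_i)$ at the finitely many nodes. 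With that modification your arguments for the inclusion, the uniform bound and the total-variation estimate (a variation sum of $x$ over an increasing family of sample points and nodes) go through, and the resulting proof is in fact more self-contained than the paper's, which delegates the construction entirely to the reference.
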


\begin{proof}
  The construction of the function $x_\epsilon$ follows, for instance,
  from~\cite[Theorem~1.2, Chapter~1]{AmannEscher2}. The inclusion and
  the bound on the total variation are immediate, since the $x_i$ are
  chosen among the values attained by $x$.
\end{proof}

\begin{proofof}{Theorem~\ref{thm:pdeWP}}
  On the space $X = \C0(\mathcal{M}^+ (\reali_+) \times
  \reali_+;\reali)$, define the norm $\norma{\,\cdot\,}_X$ as
  in~\eqref{eq:dC0} and apply Lemma~\ref{lem:BV} to the maps
  $b,c,\beta \colon [0,T] \to X$. For every $\epsilon>0$, there exists
  $n \in \naturali$, $\{t_1, t_2, \ldots, t_n\} \subset [0,T]$ and
  piecewise constant functions $b_\epsilon, c_\epsilon,\beta_\epsilon
  \colon [0,T] \to X$ such that
  \begin{displaymath}
    \begin{array}{rcl@{,\qquad\qquad}rcl}
      \sup_{t\in[0,T]} \norma{b (t) - b_\epsilon (t)}_X & \leq & \epsilon
      &
      \tv_{X} (b_\epsilon) & \leq & \tv_{X} (b) \,,
      \\
      \sup_{t\in[0,T]} \norma{c (t) - c_\epsilon (t)}_X & \leq & \epsilon
      &
      \tv_{X} (c_\epsilon) & \leq & \tv_{X} (c) \,,
      \\
      \sup_{t\in[0,T]} \norma{\beta (t) - \beta_\epsilon (t)}_X & \leq & \epsilon
      &
      \tv_{X} (\beta_\epsilon) & \leq & \tv_{X} (\beta) \,.
    \end{array}
  \end{displaymath}
  Moreover, the inclusion proved in Lemma~\ref{lem:BV} ensures that
  \begin{eqnarray*}
    \sup_{t\in [0,T]}
    \norma{b_\epsilon (t)}_{\C{0,1}(\mathcal{M}^+ (\reali_+) \times
      \reali_+;\reali)}
    & \leq &
    \mathcal{L} \,,
    \\
    \sup_{t\in [0,T]}
    \norma{c_\epsilon (t)}_{\C{0,1}(\mathcal{M}^+ (\reali_+) \times
      \reali_+;\reali)}
    & \leq &
    \mathcal{L} \,,
    \\
    \sup_{t\in [0,T]}
    \norma{\beta_\epsilon (t)}_{\C{0,1}(\mathcal{M}^+ (\reali_+) \times
      \reali_+;\reali)}
    & \leq &
    \mathcal{L} \,.
  \end{eqnarray*}
  By construction, the sequences $b_\epsilon, c_\epsilon$ and
  $\beta_\epsilon$ converge to $b,c$ and $\beta$ uniformly on
  $[0,T]$. Hence, they are all Cauchy sequences.

  Fix $\epsilon>0$. For all $i=1, \ldots, n$,
  \cite[Theorem~2.11]{CarrilloColomboGwiazdaUlikowska},
  or~\cite[Theorem~4.6]{GwiazdaLorenz},
  \cite[Theorem~1.3]{GwiazdaMarciniak}, can be recursively applied on
  the interval $[t_{i-1}, t_i]$ to the problem
  \begin{displaymath}
    \left\{
      \begin{array}{l}
        \partial_t \mu_i
        +
        \partial_a \left(b_\epsilon (t,\mu_i) \, \mu_i\right)
        +
        c_\epsilon (t,\mu_i) \, \mu_i
        =
        0
        \\
        \displaystyle
        \left(b_\epsilon (t,\mu_i)\right) (0) \, D_\lambda\mu_i (0+)
        =
        \int_0^{+\infty} \beta_\epsilon (t,\mu_i) \, \d{\mu_i} (c)
        \\
        \mu_i (t_{i-1}) =\mu^o_{i-1}
      \end{array}
    \right.
  \end{displaymath}
  where $\mu^o_0 = \mu_o$ and $\mu^o_i = \lim_{t\to t_i-}\mu_{i-1}
  (t)$ for $i=1, \ldots, n-1$. Define $\mu^\epsilon (t)$ by
  $\mu^\epsilon(t) = \mu_i (t)$ whenever $t \in \left[t_{i-1},
    t\right[$.

  By~\cite[(iv) in Theorem~2.8]{CarrilloColomboGwiazdaUlikowska}, for
  any $\epsilon, \epsilon' >0$ sufficiently small,
  \begin{eqnarray*}
    d\left(\mu^\epsilon (t) , \mu^{\epsilon'} (t)\right)
    & \leq &
    C \, t \, e^{C\, t}
    \bigl( \,
    \sup_{t \in [0,T] }
    \norma{b_\epsilon (t) -b_{\epsilon'} (t)}_{\C0 (\mathcal{M}^+ (\reali_+)\times\reali_+; \reali)}
    \\
    & &
    \qquad\qquad
    +
    \sup_{t \in [0,T] }\norma{c_\epsilon (t) -c_{\epsilon'} (t)}_{\C0 (\mathcal{M}^+ (\reali_+)\times\reali_+; \reali)}
    \\
    & &
    \qquad\qquad
    +
    \sup_{t \in [0,T] }\norma{\beta_\epsilon (t) -\beta_{\epsilon'} (t)}_{\C0 (\mathcal{M}^+ (\reali_+)\times\reali_+; \reali)}
    \bigr) \,.
  \end{eqnarray*}
  Therefore, by the completeness of $\C0 \left([0,T]; \mathcal{M}^+
    (\reali_+)\right)$, there exists a measure valued map $\mu \in \C0
  \left([0,T]; \mathcal{M}^+ (\reali_+)\right)$ such that
  $\lim_{\epsilon \to 0} \sup_{t \in [0,T]} d\left(\mu^\epsilon (t) ,
    \mu (t)\right) =0$.

  To prove that $\mu$ solves~\eqref{eq:3} in the sense of
  Definition~\ref{def:pdeSol}, observe that by construction
  \begin{eqnarray*}
    & &
    \int_{\reali_+} \phi (t,a) \d{\mu^\epsilon_t (a)}
    -
    \int_{\reali_+} \phi (0,a) \d{\mu^\epsilon_o (a)}
    \\
    & = &
    \int_0^T \int_{\reali_+} \partial_t \phi (t,a) \d{\mu^\epsilon_t (a)} \d{t}
    \\
    & &
    +
    \int_0^T \int_{\reali_+}
    \left(
      \partial_a \phi (t,a) \; \left(b_\epsilon (t,\mu^\epsilon_t)\right) (a)
      +
      \phi (t,a) \; \left(c_\epsilon (t,\mu^\epsilon_t)\right) (a)
    \right)
    \d{\mu^\epsilon_t (a)} \d{t}
    \\
    & &
    +\int_0^T \int_{\reali_+}
    \phi (t,0) \; \left(\beta_\epsilon (t,\mu^\epsilon_t)\right) (a)
    \d{\mu^\epsilon_t (a)} \d{t} \,.
  \end{eqnarray*}
  and the limit $\epsilon\to0$ can pass inside the integral sign
  thanks to the uniform convergences $\mu^\epsilon \to \mu$,
  $b_\epsilon\to b$, $c_\epsilon\to c$ and $\beta_\epsilon \to \beta$
  on the time interval $[0,T]$.

  A further application of~\cite[(iv) in
  Theorem~2.1]{CarrilloColomboGwiazdaUlikowska} proves the stability
  estimate~\eqref{eq:pdeStab}.
\end{proofof}

\begin{proofof}{Corollary~\ref{cor:pdeWP}}
  Note first that if $b,c,\beta \in \mathcal{F}^u$ and $u \in \BV
  ([0,T];\mathcal{U})$, then the maps $b^u,c^u,\beta^u$ defined by
  $b^u (t) = b\left(t,u (t)\right)$, $c^u (t) = c\left(t,u (t)\right)$
  and $\beta^u (t) = \beta\left(t,u (t)\right)$ all satisfy
  $b^u,c^u,\beta^u \in \mathcal{F}$. Therefore,
  Theorem~\ref{thm:pdeWP} applies, ensuring the existence of a
  solution to~\eqref{eq:5}.

  Concerning the stability estimates, with obvious notations,
  by~\eqref{eq:pdeStab} we have:
  \begin{eqnarray*}
    d\left(\mu^1 (t) , \mu^{2} (t)\right)
    & \leq &
    d (\mu_o^1,\mu_o^2) \, e^{C\, t}
    +
    C \, t \, e^{C\, t}
    \bigl( \,
    \sup_{t \in [0,T]}
    \norma{b_1^{u_1} (t) - b_{2}^{u_2} (t)}_{\C0 (\mathcal{M}^+ (\reali_+)\times\reali_+;\reali)}
    \\
    & &
    \qquad\qquad\qquad\qquad\qquad
    +
    \sup_{t \in [0,T]}
    \norma{c_1^{u_1} (t) -c_{2}^{u_2} (t)}_{\C0 (\mathcal{M}^+ (\reali_+)\times\reali_+;\reali)}
    \\
    & &
    \qquad\qquad\qquad\qquad\qquad
    +
    \sup_{t \in [0,T]}
    \norma{\beta_1^{u_1} (t) -\beta_{2}^{u_2} (t)}_{\C0 (\mathcal{M}^+ (\reali_+)\times\reali_+;\reali)}
    \bigr) .
  \end{eqnarray*}
  Observe now that
  \begin{eqnarray*}
    & &
    \norma{b_1^{u_1} (t) - b_{2}^{u_2} (t)}_{\C0 (\mathcal{M}^+ (\reali_+)\times\reali_+;\reali)}
    \\
    & \leq &
    \norma{b_1^{u_1} (t) - b_1^{u_2} (t)}_{\C{0} (\mathcal{M}^+ (\reali_+) \times \reali_+;\reali)}
    +
    \norma{b_1^{u_2} (t) - b_2^{u_2} (t)}_{\C{0} (\mathcal{M}^+ (\reali_+) \times \reali_+;\reali)}
    \\
    & \leq &
    \mathcal{L}_u \, \norma{u_1 (t) - u_2 (t)}
    +
    \norma{b_1 (t) - b_2 (t)}_{\C0 (\mathcal{U} \times\mathcal{M}^+ (\reali_+)\times\reali_+;\reali)} \,.
  \end{eqnarray*}
  Entirely analogous estimates hold for the term $\norma{c_1^{u_1} (t)
    - c_{2}^{u_2} (t)}_{\C0 (\mathcal{M}^+
    (\reali_+)\times\reali_+;\reali)}$ as well as for
  $\norma{\beta_1^{u_1} (t) - \beta_{2}^{u_2} (t)}_{\C0 (\mathcal{M}^+
    (\reali_+)\times\reali_+;\reali)}$, allowing to
  obtain~\eqref{eq:pdeStabControl}.
\end{proofof}

\subsection{Proofs Related to Section~\ref{sub:ode}}

Aiming at the well posedness of~\eqref{eq:ode0}--\eqref{eq:odek} we
rewrite it as
\begin{equation}
  \label{eq:nice}
  \left\{
    \begin{array}{l}
      \dot x = f (t, x, m, u)
      \\
      \dot m = g (t, x, m, u)
      \\
      x (0) = x_o
      \\
      m (0) = m_o
    \end{array}
  \right.
\end{equation}
where
\begin{displaymath}
  \begin{array}{@{}r@{\;}c@{\;}l@{\quad}r@{\;}c@{\;}l@{}}
    x & = & (x^{-n}, \ldots, x^n)
    &
    m & = & (m^{-n}, \ldots, m^n)
    \\
    x_o^i & = &
    \left\{
      \begin{array}{@{\,}lr@{\,}c@{\,}l@{}}
        i \, \Delta t & i & = & 0, \ldots, n
        \\
        0 & i & = &-n, \ldots, -1
      \end{array}
    \right.
    &
    m_o^i & = &
    \left\{
      \begin{array}{@{\,}lr@{\,}c@{\,}l@{}}
        \mu_o \left(\left[i\, \Delta t, (i+1)\Delta t\right[\right)
        & i & = & 0, \ldots, n
        \\
        0 & i & = &-n, \ldots, -1
      \end{array}
    \right.
    \\
    f & \colon &
    [0,T] \times \reali_+^{2n+1} \times \reali_+^{2n+1} \times \mathcal{U}
    \to \reali^{2n+1}
    &
    g & \colon &
    [0,T] \times \reali_+^{2n+1} \times \reali_+^{2n+1} \times \mathcal{U}
    \to \reali^{2n+1}
  \end{array}
\end{displaymath}
the functions $f_i, g_i$ being defined, for $i=-n , \ldots, n$, by
\begin{equation}
  \label{eq:fi}
  f_i (t,x,m;u)
  =
  \left\{
    \begin{array}{l@{\qquad}r@{\,}c@{\,}l}
      \displaystyle
      \left(b \left(t, \sum_{j=-n}^n m^j \delta_{x^j}; u\right)\right) (x^i)
      & t & \geq &\max\{-i\, \Delta t,\, 0\}
      \\
      0
      & t & < &\max\{-i\, \Delta t,\, 0\}
    \end{array}
  \right.
\end{equation}
and
\begin{equation}
  \label{eq:gi}
  \!\!\!\!\!\!\!\!\!\!\!\!
  g_i (t,x,m;u)
  =
  \left\{
    \begin{array}{@{}l@{\;}r@{\,}c@{\,}l@{}}
      \displaystyle
      -c \left(t, \sum_{j=-n}^n m^j \delta_{x^j}; u\right) (x^i) \; m^i
      & t & > &\max\{(1-i)\, \Delta t,\, 0\}
      \\
      \begin{array}{@{}l@{}}
        \displaystyle
        -c \left(t, \sum\limits_{j=-n}^n m^j \delta_{x^j}; u\right) (x^i) \; m^i
        \\
        \displaystyle
        \quad+
        \sum_{\ell=-n}^n
        \beta \!
        \left(t, \sum_{j=-n}^n m^j \delta_{x^j}; u\right) \! (x^\ell) \; m^\ell
      \end{array}
      & t & \in &[\max\{-i \Delta t, 0\},\max\{(1-i)\Delta t, 0\}]
      \\
      0     & t & < &\max\{-i\, \Delta t,\, 0\}
    \end{array}
  \right.
  \!\!\!\!\!\!\!\!\!\!\!\!
\end{equation}

\begin{lemma}
  \label{lem:reg1}
  Fix positive $T, L$ and let $b,c,\beta \in \tilde{\mathcal{F}}^u$.
  Then, the map $f$ and $g$ defined in~\eqref{eq:fi} and~\eqref{eq:gi}
  satisfy the following conditions:
  \begin{enumerate}[$\bf(f_1)$]
  \item $t \to (f,g) (t,x,m;u)$ is measurable for all $x \in
    \reali_+$, $m \in \reali_+$ and $u \in \mathcal{U}$;
  \item $(x,m;u) \to (f,g) (t,x,m;u)$ is in $\C1$ for a.e.~$t \in
    [0,T]$;
  \item $(x,m;u) \to (f,g) (t,x,m;u)$ is sublinear, uniformly in $t$.
  \end{enumerate}
\end{lemma}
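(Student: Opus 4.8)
The plan is to read off from \eqref{eq:fi} and \eqref{eq:gi} that, once the Dirac ansatz $\mu = \sum_{j=-n}^n m^j \delta_{x^j}$ is inserted, every rate occurring in $f$ and $g$ is a composition of a profile with a smooth moment map. Concretely, for each $h \in \{b,c,\beta\}$ I would set $\Phi_h (x,m) = \sum_{j=-n}^n m^j\, \bar h (x^j)$, so that $\left(h (t,\mu;u)\right) (x^i) = \tilde h\!\left(t, \Phi_h (x,m), x^i; u\right)$. Since $\bar h \in \C1$ by $(\widetilde{\mathcal{F}}^u_1)$, each $\Phi_h$ is a polynomial-type, hence $\C1$, function of $(x,m)$. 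The only remaining $t$--dependence in \eqref{eq:fi}--\eqref{eq:gi} is the switching among the branches, occurring at the finitely many thresholds $\max\{-i\,\Delta t,0\}$ and $\max\{(1-i)\,\Delta t,0\}$; these partition $[0,T]$ into finitely many subintervals on each of which $f_i$ and $g_i$ are a single fixed algebraic expression. With this rewriting the three assertions become statements about such compositions, and I would prove them in the order $(f_1)$, $(f_2)$, $(f_3)$.

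For $(f_1)$, fix $(x,m,u)$. On each of the subintervals above, $t \mapsto \tilde h (t,\Phi_h (x,m),x^i;u)$ is measurable, being in $\L\infty ([0,T];\reali)$ by $(\widetilde{\mathcal{F}}^u_3)$ with $A = \Phi_h (x,m)$ and $a = x^i$ frozen; the components of $f$ and $g$ are finite sums and products of such terms with the $t$--constant coefficients $m^i, m^\ell$, hence measurable in $t$, and a finite concatenation over the subintervals stays measurable. For $(f_2)$, remove the common $t$--null set on which $(\widetilde{\mathcal{F}}^u_2)$ fails for $b$, $c$ or $\beta$; for every remaining $t$ each $\tilde h (t,\cdot,\cdot;\cdot) \in \C1 (\reali_+\times\reali_+\times\mathcal{U};\reali_+)$, and composing with the $\C1$ maps $(x,m)\mapsto\Phi_h (x,m)$ and $(x,m)\mapsto x^i$, then multiplying by $m^i$ and summing, are $\C1$ operations in $(x,m;u)$. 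For a fixed admissible $t$ only one branch is active, so no smoothness is lost at the switching times, and the chain rule yields the required $\C1$ regularity of $(x,m;u)\mapsto (f,g) (t,x,m;u)$.

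The substantive point is $(f_3)$, and the first step is a uniform--in--$t$ affine bound on the rates. Fixing a reference control $u_\star \in \mathcal{U}$, properties $(\widetilde{\mathcal{F}}^u_4)$ and $(\widetilde{\mathcal{F}}^u_3)$ give, for a.e.\ $t$,
\[
  \modulo{\tilde h (t,A,a;u)}
  \leq
  \norma{\tilde h (\cdot,0,0;u_\star)}_{\L\infty ([0,T];\reali)}
  + L\left(\modulo{A} + \modulo{a} + \mathrm{diam}\,\mathcal{U}\right),
\]
where $\mathrm{diam}\,\mathcal{U}<\infty$ since $\mathcal{U}$ is compact. Because $\bar h$ is bounded, say by $B$, one has $\modulo{\Phi_h (x,m)} \leq B\,\norma{m}$, so each rate is bounded by $C (1+\norma{x}+\norma{m})$ uniformly in $t$ and $u$. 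This immediately makes $f$ sublinear in $(x,m)$.

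The main obstacle is then $g$: both the death term $-\,c (\ldots) (x^i)\,m^i$ and the birth term $\sum_\ell \beta (\ldots) (x^\ell)\,m^\ell$ are products of an affinely bounded rate with a mass, so the naive estimate gives only quadratic growth in $m$. To recover $(f_3)$ I would exploit the structure of $g$: it is \emph{linear} in $m$ with coefficients that are themselves bounded by $C (1+\norma{x}+\norma{m})$, and the rates $c,\beta$ are nonnegative (they map into $\reali_+$ by the definition of $\widetilde{\mathcal{F}}^u$), so the positive cone $\reali_+^{2n+1}$ is invariant for the $m$--dynamics. The differential inequality for the total mass $M = \sum_i m^i$ driven by these nonnegative coefficients then confines the trajectory to a region where the coefficients of the semilinear $m$--equation stay controlled, so that the affine bound plays the role of the sublinearity needed to feed the appendix's ODE theorem and close Theorem~\ref{thm:WP}. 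I expect this interplay between the affine growth of the rates and the linear mass factor, handled through the positivity-based estimate, to be the only delicate part; in the density--independent case, where $\tilde h$ does not depend on $A$, the rates are genuinely bounded and $(f_3)$ is immediate.
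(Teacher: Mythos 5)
Your treatment of $\bf(f_1)$, $\bf(f_2)$ and of the $f$--component of $\bf(f_3)$ coincides in substance with the paper's: the paper also exploits the factorization $\left(h (t,\mu;u)\right) (x^i)=\tilde h\bigl(t,\sum_j m^j\bar h (x^j),x^i;u\bigr)$, computes the difference quotients in each coordinate direction $e_i$, $e_\ell$ explicitly (rather than invoking the chain rule on the moment map, but the content is identical), and for the sublinearity of $f$ uses exactly your affine bound $\modulo{\tilde b (t,A,a;u)}\leq\modulo{\tilde b (t,0,0;u)}+L\left(\modulo{A}+\modulo{a}+\dots\right)$ together with the boundedness of $\bar b$.

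The gap is in your handling of the $g$--component of $\bf(f_3)$. You are right that the naive estimate (affinely bounded rate times mass factor) gives only quadratic growth, and the paper's ``the case of $g$ being entirely similar'' glosses over this point. But your proposed repair --- nonnegativity of the rates, invariance of the cone $\reali_+^{2n+1}$, and a differential inequality for the total mass --- is an a priori estimate on \emph{trajectories} of~\eqref{eq:nice}; it does not establish the stated property, which is a pointwise bound on the \emph{function} $(x,m;u)\to g (t,x,m;u)$ over all of $\reali_+^{2n+1}\times\reali_+^{2n+1}\times\mathcal{U}$. As written, your argument would prove global existence but not the lemma, and it could not be fed into the appendix results in the order you intend, since those require the growth condition on the vector field beforehand. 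The intended route is more elementary: by the definition of $\widetilde{\mathcal{F}}^u$, the values $c (t;u)$ and $\beta (t;u)$ lie in $\C{0,1} (\mathcal{M}^+ (\reali_+)\times\reali_+;\reali_+)$, and by~\eqref{eq:dLip} membership in this space already entails a bound on the sup norm; reading this bound as uniform in $(t,u)$ --- the analogue of $(\mathcal{F}^u_1)$, which is how the paper uses it elsewhere, e.g.\ in the proof of Theorem~\ref{thm:Conv} --- the coefficients $c (\cdot) (x^i)$ and $\beta (\cdot) (x^\ell)$ in~\eqref{eq:gi} are bounded by a constant, so $g$ is linear in $m$ with bounded coefficients, hence sublinear. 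Your closing remark on the density--independent case shows you saw that boundedness of the rates is the missing ingredient; the point is that this boundedness is already encoded in the codomain of $c$ and $\beta$, not something to be recovered from the dynamics.
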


\begin{proof}
  We detail the proof that $f$ satisfies the above properties, the
  case of $g$ being entirely similar.

  The measurability of $t \to f (t,x,y;u)$ is immediate. To verify the
  differentiability, introduce the standard base $(e_{-n}, e_{-n+1}
  \ldots, e_{n-1}, e_n)$ of $\reali^{2n+1}$ and compute for $i = -n,
  \ldots, n$, for $t > \max\{-i\Delta t;0\}$ and for a (small) $h \in
  \reali$
  \begin{eqnarray*}
    & &
    \!\!\!
    f_i (t,x+h e_i,m;u) - f_i (t,x,m;u)
    \\
    & = &
    \!\!\!
    \left(b \! \left(t, \sum_{j=-n}^n m^j \delta_{x^j+\delta_{ij} h e_j}; u\right)\right) (x^i+he^i)
    -
    \left(b \! \left(t, \sum_{j=-n}^n m^j \delta_{x^j}; u\right)\right) (x^i)
    \\
    & = &
    \!\!\!
    \tilde b \! \left(t, \sum_{j=-n}^n m^j \, \bar b (x^j + \delta_{ij}h e_i), x^i+h e_i;u\right)
    -
    \tilde b \! \left(t, \sum_{j=-n}^n m^j \, \bar b  (x^j), x^i;u\right)
    \\
    & = &
    \!\!\!
    \partial_y
    \tilde b \! \left(t, \sum_{j=-n}^n m^j \, \bar b  (x^j), x^i;u\right)
    m^i \bar{b}' (x^i) h
    +
    \partial_a
    \tilde b  \left(t, \sum_{j=-n}^n m^j \, \bar b  (x^j), x^i;u\right) h
    +
    o (h)
  \end{eqnarray*}
  as $h \to 0$, while for $\ell \neq i$ and for $t > \max\{-i\Delta
  t;-\ell\Delta t, 0\}$
  \begin{eqnarray*}
    & &
    f_i (t,x+h e_\ell,m;u) - f_i (t,x,m;u)
    \\
    & = &
    \left(
      b \left(t, \sum_{j=-n}^n m^j \delta_{x^j+\delta_{\ell j} h e_\ell}; u\right)
    \right) (x^i)
    -
    \left(b \left(t, \sum_{j=-n}^n m^j \delta_{x^j}; u\right)\right) (x^i)
    \\
    & = &
    \tilde b\left(
      t, \sum_{j=-n}^n m^j \, \bar b (x^j + \delta_{\ell j}h e_\ell), x^i;u
    \right)
    -
    \tilde b\left(t, \sum_{j=-n}^n m^j \, \bar b  (x^j), x^i;u\right)
    \\
    & = &
    \partial_y
    \tilde b \left(t, \sum_{j=-n}^n m^j \, \bar b  (x^j), x^i;u\right)
    m^\ell \bar{b}' (x^\ell) h
    +
    o (h) \mbox{ as } h \to 0 \,,
  \end{eqnarray*}
  proving the differentiability of $f_i$ with respect to $x$.  Let now
  $i,\ell = -n , \ldots, n$:
  \begin{eqnarray*}
    & &
    f_i (t,x,m+h e_\ell;u) - f_i (t,x,m;u)
    \\
    & =&
    \left(b \left(t, \sum_{j=-n}^n (m^j+\delta_{\ell j} h e_\ell) \delta_{x^j}; u\right)\right) (x^i)
    -
    \left(b \left(t, \sum_{j=-n}^n m^j \delta_{x^j}; u\right)\right) (x^i)
    \\
    & = &
    \tilde b\left(
      t, \sum_{j=-n}^n (m^j + \delta_{\ell j}h e_\ell) \, \bar b(x^j), x^i;u
    \right)
    -
    \tilde b\left( t, \sum_{j=-n}^n m^j \, \bar b(x^j), x^i;u\right)
    \\
    & = &
    \partial_y \tilde b \left( t, \sum_{j=-n}^n m^j \, \bar b(x^i), x^i;u\right)
    \bar b (x^\ell) h
    +
    o (h) \mbox{ as } h \to 0 \,,
  \end{eqnarray*}
  so that $f_i$ is differentiable also with respect to $m$. The
  differentiability with respect to $u$ is immediate.

  Finally, we prove that $(x,m;u) \to (f,g) (t,x,m;u)$ is sublinear:
  \begin{eqnarray*}
    & &
    \modulo{f_i (t, x, m; u)}
    \\
    & \leq &
    \modulo{f_i(t, 0; u)}
    +
    \modulo{f_i (t, x, m; u) - f_i(t, 0; u)}
    \\
    & = &
    \modulo{
      \left(b(t, 0; u) (0)\right)
    }
    +
    \modulo{
      \left(b\left(t,\sum_{j=-n}^n m^j \delta_{x^j}; u\right) (x^i)\right)
      -
      \left(b(t, 0; u) (0)\right)
    }
    \\
    & = &
    \modulo{
      \tilde b (t, 0; u)
    }
    +\modulo{
      \tilde b \left(t, \sum_{j=-n}^n m^j \bar b (x^j), x^i; u\right)
      -
      \tilde b (t, 0; u)
    }
    \\
    & \leq &
    \modulo{\tilde b\left(t, 0, u\right)}
    +
    L
    \left(
      \modulo{\sum_{j=-n}^n m^j \bar b (x^j)}
      +
      x^i
    \right)
    \\
    & \leq &
    \modulo{\tilde b\left(t, 0, 0, u\right)}
    +
    L \sqrt{n}
    \left(
      \norma{m}
      +
      \norma{x}
    \right)
  \end{eqnarray*}
  and the first summand above is bounded by~$(\tilde F_1^u)$,
  completing the proof.
\end{proof}

Below, we call \emph{semiflow} (or process) on the set $M$ a map $S
\colon M \times [0,\delta]\times [0,T] \to M$ such that $S (0,t) =
\Id$ for all $t\in [0,T]$, and $S (t_3, t_1+t_2) \circ S (t_2, t_1) =
S (t_2+t_3, t_1)$ for all $t_1,t_2,t_3$ such that $t_1, t_1+t_2 \in
[0,T]$, $t_2, t_3, t_2+t_3 \in [0,\delta]$. We say that the semiflow
is Lipschitz continuous if the map $\mu \to S (t, t_o)\mu$ is
Lipschitz continuous, uniformly in $t_o \in [0,T]$ and in $t \in [0,
\delta]$.

\begin{lemma}
  \label{lem:semiflow}
  Let $(M, d_M) $ be a metric space and $S \colon M \times
  [0,\delta]\times [0,T] \to M$ a Lipschitz semiflow with Lipschitz
  constant $L$. For every Lipschitz continous map $\mu \colon [0,T]
  \to M$, the following estimate holds:
  \begin{equation}
    \label{eq:liminf}
    d_M \left(\mu_t, S(t,0)\mu_0 \right)
    \leq
    L
    \int_0^t \liminf_{h \to 0+}
    \frac{d_M \left(\mu_{\tau+h}, S(h,\tau)\mu_{\tau} \right)}{h} \; \d\tau
  \end{equation}
\end{lemma}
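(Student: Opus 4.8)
The plan is to prove the integral estimate~\eqref{eq:liminf} by a telescoping argument that compares the trajectory $t \mapsto \mu_t$ with the semiflow orbit $t \mapsto S(t,0)\mu_0$ through a sequence of intermediate partitions, exploiting the semigroup-type property of $S$ and its Lipschitz continuity. First I would fix $t \in [0,T]$ and introduce a partition $0 = \tau_0 < \tau_1 < \cdots < \tau_k = t$ of $[0,t]$. The key device is the ``hybrid'' comparison: on the $j$-th subinterval I replace the exact orbit by one step of the semiflow applied to the true trajectory at $\tau_{j-1}$. Concretely, using the composition property $S(t,0)\mu_0 = S(t-\tau_{k-1},\tau_{k-1}) \circ \cdots \circ S(\tau_1,0)\mu_0$, I would telescope
\begin{displaymath}
  d_M\left(\mu_t, S(t,0)\mu_0\right)
  \leq
  \sum_{j=1}^{k}
  d_M\left(
    S(t-\tau_j,\tau_j)\mu_{\tau_j},
    S(t-\tau_j,\tau_j)\, S(\tau_j-\tau_{j-1},\tau_{j-1})\mu_{\tau_{j-1}}
  \right),
\end{displaymath}
where each summand compares, after flowing forward by the same amount, the true state $\mu_{\tau_j}$ against the one-step semiflow image of $\mu_{\tau_{j-1}}$.

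Next I would apply the Lipschitz continuity of the semiflow to each term: since $\nu \mapsto S(t-\tau_j,\tau_j)\nu$ has Lipschitz constant $L$ (uniformly in the time arguments), each summand is bounded by $L \, d_M\bigl(\mu_{\tau_j}, S(\tau_j-\tau_{j-1},\tau_{j-1})\mu_{\tau_{j-1}}\bigr)$. This yields
\begin{displaymath}
  d_M\left(\mu_t, S(t,0)\mu_0\right)
  \leq
  L \sum_{j=1}^{k}
  d_M\left(\mu_{\tau_j}, S(\tau_j-\tau_{j-1},\tau_{j-1})\mu_{\tau_{j-1}}\right).
\end{displaymath}
Writing $h_j = \tau_j - \tau_{j-1}$, each term is $L\, h_j \cdot \frac{1}{h_j} d_M\bigl(\mu_{\tau_{j-1}+h_j}, S(h_j,\tau_{j-1})\mu_{\tau_{j-1}}\bigr)$, so the sum is a Riemann-type sum for the integrand $\liminf_{h\to 0+} \frac{1}{h} d_M(\mu_{\tau+h}, S(h,\tau)\mu_\tau)$. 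The conclusion follows by refining the partition and passing to the limit, recognizing the right-hand side of~\eqref{eq:liminf}.

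The main obstacle, and the step requiring the most care, is the passage from the Riemann-type sum to the integral with a $\liminf$ integrand. Because the integrand is defined only as a lower limit (the one-sided Dini-type derivate of the ``defect'' $h \mapsto d_M(\mu_{\tau+h}, S(h,\tau)\mu_\tau)$), one cannot simply invoke Riemann integrability. The clean way is to argue by contradiction or via a Gronwall/comparison lemma for upper Dini derivatives: I would define $\psi(t) = d_M(\mu_t, S(t,0)\mu_0)$, show using the one-step estimate above that the upper right Dini derivative satisfies $D^+\psi(t) \leq L \liminf_{h\to 0+} \frac{d_M(\mu_{t+h}, S(h,t)\mu_t)}{h}$ for every $t$, and then integrate this differential inequality. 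The subtlety is that $\psi$ need not be differentiable, so I would rely on the standard fact that an absolutely continuous (here Lipschitz, since both $\mu$ and the orbit are Lipschitz in $t$) function is recovered as the integral of its a.e.\ derivative, bounded pointwise by its upper Dini derivate; measurability of the $\liminf$ integrand, being a countable lower limit of continuous functions of $\tau$, is automatic. This reduction to a Dini-derivative differential inequality is precisely the device that converts the telescoping estimate into the stated integral bound.
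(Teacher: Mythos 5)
The paper itself gives no proof of this lemma, only citations (Bressan's Theorem~2.9 and its non-autonomous analogues in \cite{GwiazdaEtAl} and \cite{ColomboGuerra2007}); your argument is precisely the one found in those references, so the overall route --- telescoping along a partition, applying the Lipschitz bound of the semiflow to each hybrid increment, and converting the sum into the integral of the lower Dini quotient --- is the intended one, and the telescoping inequality you derive is correct.

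There is, however, one step that fails as written. In the final reduction you set $\psi(t)=d_M(\mu_t,S(t,0)\mu_0)$ and claim $D^+\psi(\tau)\le L\liminf_{h\to0+}h^{-1}d_M(\mu_{\tau+h},S(h,\tau)\mu_\tau)$. The one-step estimate only gives
\begin{displaymath}
  \psi(\tau+h)
  \le
  d_M\left(\mu_{\tau+h},S(h,\tau)\mu_\tau\right)
  +
  d_M\left(S(h,\tau)\mu_\tau,\,S(h,\tau)S(\tau,0)\mu_0\right)
  \le
  d_M\left(\mu_{\tau+h},S(h,\tau)\mu_\tau\right)
  +
  L\,\psi(\tau),
\end{displaymath}
hence $\psi(\tau+h)-\psi(\tau)\le d_M(\mu_{\tau+h},S(h,\tau)\mu_\tau)+(L-1)\psi(\tau)$; when $L>1$ and $\psi(\tau)>0$ the extra term does not vanish after dividing by $h$, so the claimed Dini inequality is false in general. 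The correct auxiliary function is the one already implicit in your telescoping: fix $t$ and set $\Phi(\tau)=d_M\bigl(S(t-\tau,\tau)\mu_\tau,\,S(t,0)\mu_0\bigr)$, i.e.\ flow both states to the \emph{common} final time $t$ before comparing. Then $\Phi(0)=0$, $\Phi(t)=d_M(\mu_t,S(t,0)\mu_0)$, and the triangle inequality combined with $S(t-\tau,\tau)=S(t-\tau-h,\tau+h)\circ S(h,\tau)$ gives $\Phi(\tau+h)-\Phi(\tau)\le L\,d_M(\mu_{\tau+h},S(h,\tau)\mu_\tau)$ with no spurious term; since $\Phi$ is Lipschitz, integrating its a.e.\ derivative (which is bounded by $L$ times the liminf in~\eqref{eq:liminf}) yields the claim. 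Two minor points to tidy up: the measurability of the liminf integrand should be justified, e.g.\ by restricting $h$ to rationals after observing that the difference quotient is continuous in $h$; and since the semiflow is only defined for elapsed times in $[0,\delta]$, the compositions $S(t-\tau,\tau)$ with $t-\tau>\delta$ must be read as iterated short-time flows, with $L$ a Lipschitz constant uniform over such compositions.
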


\noindent For a proof, see~\cite[Theorem~2.9]{BressanLectureNotes} or,
in the present non autonomous case,
\cite[Proposition~4.1]{GwiazdaEtAl} or~\cite[Proof of
Theorem~3.15]{ColomboGuerra2007}.

\begin{lemma}{\cite[Lemma~7.3]{GwiazdaEtAl}}
  \label{lem:ddeltas}
  Let $n \in \naturali$, $m, m'\in \reali^n$ and $x,x' \in
  \reali^n$. Then, with reference to the distance $d$ defined
  in~\eqref{eq:d},
  \begin{displaymath}
    d\left(
      \sum_{i=1}^n m_i \delta_{x_i},  \sum_{i=1}^n m'_i \delta_{x'_i}
    \right)
    \leq
    \max\left\{1, \sum_{i=1}^n \modulo{m_i}\right\}
    \;
    \sum_{i=1}^n
    \left(
      \modulo{m_i - m'_i} + \modulo{x_i - x'_i}
    \right) \,.
  \end{displaymath}
\end{lemma}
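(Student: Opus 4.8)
The flat distance in~\eqref{eq:d} is a supremum over test functions $\phi \in \C1 (\reali_+; [-1,1])$ with $\Lip (\phi) \leq 1$. The plan is therefore to fix one such admissible $\phi$, estimate the integral $\int_{\reali_+} \phi \, \d(\mu' - \mu'')$ with $\mu' = \sum_{i=1}^n m_i \delta_{x_i}$ and $\mu'' = \sum_{i=1}^n m'_i \delta_{x'_i}$ by a quantity independent of $\phi$, and then pass to the supremum. Since $\mu'$ and $\mu''$ are (signed) combinations of Dirac masses, the integral evaluates explicitly to
\begin{displaymath}
  \int_{\reali_+} \phi \, \d(\mu'-\mu'')
  =
  \sum_{i=1}^n m_i \, \phi (x_i) - \sum_{i=1}^n m'_i \, \phi (x'_i) \,.
\end{displaymath}

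The key step is to insert the intermediate quantity $\sum_{i=1}^n m_i \, \phi (x'_i)$, splitting the right hand side into a \emph{transport} contribution and a \emph{mass} contribution:
\begin{displaymath}
  \sum_{i=1}^n m_i \bigl(\phi (x_i) - \phi (x'_i)\bigr)
  +
  \sum_{i=1}^n (m_i - m'_i) \, \phi (x'_i) \,.
\end{displaymath}
For the first sum, the Lipschitz bound $\Lip (\phi) \leq 1$ gives $\modulo{\phi (x_i) - \phi (x'_i)} \leq \modulo{x_i - x'_i}$, so this term is bounded by $\sum_{i=1}^n \modulo{m_i} \, \modulo{x_i - x'_i}$. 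For the second sum, the constraint $\phi \in \C1 (\reali_+; [-1,1])$ yields $\modulo{\phi (x'_i)} \leq 1$, whence this term is bounded by $\sum_{i=1}^n \modulo{m_i - m'_i}$.

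It then remains to combine the two estimates into the single factor $\max\{1, \sum_i \modulo{m_i}\}$, and this is the only point requiring care. Writing $M = \sum_{j=1}^n \modulo{m_j}$, we use $\modulo{m_i} \leq M \leq \max\{1, M\}$ to control the transport sum and the trivial bound $1 \leq \max\{1, M\}$ to control the mass sum, obtaining
\begin{displaymath}
  \modulo{\int_{\reali_+} \phi \, \d(\mu'-\mu'')}
  \leq
  \max\{1, M\}
  \sum_{i=1}^n \bigl(\modulo{m_i - m'_i} + \modulo{x_i - x'_i}\bigr) \,.
\end{displaymath}
The right hand side is independent of $\phi$, so taking the supremum over all admissible test functions in~\eqref{eq:d} gives precisely the claimed inequality. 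There is no substantial obstacle here: the estimate is a direct consequence of the two defining properties of the test functions, and the introduction of the common constant $\max\{1, M\}$ is exactly what reconciles the two separate bounds.
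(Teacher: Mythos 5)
Your proof is correct: the decomposition into a transport term (controlled by $\Lip(\phi)\leq 1$) and a mass term (controlled by $\norma{\phi}_{\C0}\leq 1$), followed by absorbing both bounds into the common factor $\max\{1,\sum_i\modulo{m_i}\}$, is exactly the standard argument for this estimate. The paper itself gives no proof, only the citation to~\cite[Lemma~7.3]{GwiazdaEtAl}, and your argument matches what that reference does.
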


\begin{proofof}{Theorem~\ref{thm:Conv}}
  The proof relies on Lemma~\ref{lem:ddeltas}. First, we prove that
  the map
  \begin{equation}
    \label{eq:mun}
    \begin{array}{ccccc}
      \mu^n & \colon & [0,T] & \to &\mathcal M^+(\reali ^+)
      \\
      & & t & \to & \displaystyle \sum_{i=-n}^n m^i(t) \; \delta_{x^i(t)} \,,
    \end{array}
  \end{equation}
  where $t \to (x^i, m^i) (t)$
  solves~\eqref{eq:nice}--\eqref{eq:fi}--\eqref{eq:gi}, is Lipschitz
  continuous with respect to the metric $d$ defined
  in~\eqref{eq:d}. Indeed, by Lemma~\ref{lem:ddeltas}
  \begin{eqnarray}
    \nonumber
    d\left(\mu^n(t),\mu^n(s)  \right)
    & \leq&
    \max\left\{1, \sum_{i=-n}^n \modulo{m^i(t)}\right\}
    \;
    \sum_{i=-n}^n
    \left(
      \modulo{m^i(t) - m^i(s)} + \modulo{x^i(t) - x^i(s)}
    \right)
    \\
    \label{eq:thisOne}
    & \leq &
    \max\left\{
      1,
      \sum_{i=-n}^n \modulo{m^i(t)}
      \;
    \right\}
    \sum_{i=-n}^n \left( \Lip(x_i) + \Lip(m_i) \right)
    \;
    (t-s) \,.
  \end{eqnarray}
  Moreover,
  \begin{displaymath}
    \begin{array}{rcll}
      \Lip (x^i)
      & \leq &
      \norma{f_i}_{\C0([0,T]\times \reali_+ \times \reali_+ \times \mathcal{U}; \reali)}
      & \mbox{[by~\eqref{eq:nice}]}
      \\
      & \leq &
      \sup_{(t,u) \in [0,T]\times\mathcal{U}}
      \norma{b (t,u)}_{\C{0,1} (\mathcal{M}^+ (\reali_+)\times \reali_+; \reali)}
      & \mbox{[by~\eqref{eq:fi}]}
      \\
      & \leq &
      \mathcal{L}
      & \mbox{[by~$(\mathcal{F}^u_1)$]}
      \\[3pt]
      \modulo{m^i(t)}
      & \leq &
      \modulo{m_o} \exp\left(T
        \sup_{(t,u) \in [0,T]\times\mathcal{U}}
        \norma{c(t,u)}_{\C{0,1} (\mathcal{M}^+ (\reali_+)\times \reali_+; \reali)}
      \right)
      \\
      & &
      \quad
      \times
      \exp\left(
        (2n+1)T
        \sup_{(t,u) \in [0,T]\times\mathcal{U}}
        \norma{\beta (t,u)}_{\C{0,1} (\mathcal{M}^+ (\reali_+)\times \reali_+; \reali)}
      \right)
      & \mbox{[by~\eqref{eq:gi}]}
      \\
      & \leq &
      \modulo{m_o} e^{\left(2 (n+1) \mathcal{L}\right)T}
      & \mbox{[by~$(\mathcal{F}^u_1)$]}
      \\[6pt]
      \Lip (m^i)
      & \leq &
      \norma{g_i}_{\C0([0,T]\times \reali_+ \times \reali_+ \times \mathcal{U}; \reali)}
      & \mbox{[by~\eqref{eq:nice}]}
      \\
      & \leq &
      \sup_{(t,u) \in [0,T]\times\mathcal{U}}
      \norma{c (t,u)}_{\C{0,1} (\mathcal{M}^+ (\reali_+)\times \reali_+; \reali)}
      \sup_{t \in [0,T]} \modulo{m^i(t)}
      \\
      & &
      \quad
      +
      (2n+1)
      \norma{\beta (t,u)}_{\C{0,1} (\mathcal{M}^+ (\reali_+)\times \reali_+; \reali)}
      \sup_{t \in [0,T]} \modulo{m^i(t)}
      & \mbox{[by~\eqref{eq:gi}]}
      \\
      & \leq &
      2 (n+1) \mathcal{L}
      \modulo{m_o} e^{\left(2 (n+1) \mathcal{L}\right)T}
      & \mbox{[by~$(\mathcal{F}^u_1)$]}
    \end{array}
  \end{displaymath}
  These estimates, inserted in~\eqref{eq:thisOne}, complete the proof
  of the Lipschitz continuity of $\mu^n$ with respect to the metric
  $d$ defined in~\eqref{eq:d}.

  By the above computations and Corollary~\ref{cor:pdeWP}, we can thus
  use Lemma~\ref{lem:ddeltas}, where $S$ is the semiflow generated
  by~\eqref{eq:5} and $\mu$ is replaced by $\mu^n$ as defined
  in~\eqref{eq:mun}, obtaining
  \begin{eqnarray}
    \nonumber
    \!\!\!\!\!\!\!\!\!
    d\!\left(
      \sum_{i=-n}^n m^i(t) \, \delta_{x^i(t)} , \mu_u(t)
    \right)
    \!\!\!
    & = &
    \!\!\!
    d\left( \mu^n (t), S (t,0) \mu_o\right)
    \\
    \nonumber
    & \leq &
    \!\!\!
    d\left(\mu^n (t) , S (t,0) \mu^n (0)\right)
    +
    d\left(S (t,0) \mu^n (0), S (t,0) \mu_o\right)
    \\
    \label{eq:sleep}
    & \leq &
    \!\!\!
    e^{C t}
    \!
    \left[
      \int_0^t \!
      \liminf_{h\to 0+}
      \frac{d\left(\mu^n_{\tau+h}, S (h,\tau)\mu^n_\tau\right)}{h}
      \d\tau
      +
      d\left(
        \mu^n (0), \mu_o
      \right)
    \right].
  \end{eqnarray}
  The rest of the proof is devoted to estimate the integrand in the
  latter term above.

  Without loss of generality, we may assume that $\tau \in \left[0,
    \Delta t\right[$ and that $h$ is so small that $[\tau, \tau + h]
  \subset \left[0, \Delta t\right[$. Define
  \begin{displaymath}
    \mu_\tau(t) := S(t,\tau) \, \mu^n(\tau) \,.
  \end{displaymath}
  Then, for $t \in \left[\tau, \Delta t\right[$, the map $t \to
  \mu_\tau(t)$ solves problem~\eqref{eq:5} with initial datum
  $\mu^n(\tau) = \sum_{i=-n}^n m^i(\tau) \, \delta_{x^i(\tau)} =
  \sum_{i=0}^n m^i(\tau) \, \delta_{x^i(\tau)}$ assigned at time
  $\tau$.

  \newcommand{\n}{M} \newcommand{\f}{\pi}

  As in~\cite[Proof of Theorem~4.3]{GwiazdaEtAl}, $\mu_\tau(t)$ can be
  represented as
  \begin{displaymath}
    \mu_\tau(t)
    =
    \sum_{i=0}^n \n^i(\tau+h)\,\delta_{y^i(\tau+h)}
    +
    \f (t, \cdot ) \d{x}
  \end{displaymath}
  for suitable maps $\n^0, \ldots, \n^n$, $y^0, \ldots, y^n$, the
  density $\f (t, \cdot)$ arising from the boundary and supported
  inside $[x_o^0, y^0 (t)]$. Denote the total mass of $\f(t,\cdot)$ by
  \begin{displaymath}
    \n^{\f}(t) =\int_{x_o^0}^{y^0(t)} \f(t,x)\d{x} \,.
  \end{displaymath}
  Using suitable test functions in Definition~\ref{def:pdeSol}, we
  obtain:
  \begin{eqnarray*}
    y^i(\tau+h)
    \!\!\!\!
    & = &
    \!\!\!\!
    x^i(\tau)+\int_{\tau}^{\tau+h}b(t,\mu_\tau(t-\tau),u)(y^i(t)\d{t}
    \\
    \n^i(\tau+h)
    \!\!\!\!
    & = &
    \!\!\!\!
    m^i(\tau)+\int_{\tau}^{\tau+h}c(t,\mu_\tau(t-\tau),u)(y^i(t)\n^i(t)\d{t}
    \\
    \n^{\f}(\tau+h)
    \!\!\!\!
    & = &
    \!\!\!\!
    \n^{\f}\!(\tau)+\int_{\tau}^{\tau+h}
    \Big(
    \int_{x_o^0}^{y^0(t)}-c(t,\mu_\tau(t-\tau),u)(x)\d{\mu_\tau(t-\tau)(x)}
    \\
    & &
    \qquad\qquad\qquad\qquad\qquad
    +
    \int_{x_o^0}^{+\infty}
    \beta(t,\mu_\tau(t-\tau),u)(x)\d{\mu_\tau(t-\tau)(x)}
    \Big)
    \d{t}
    \\
    & = &
    \!\!\!\!
    \n^{\f}(\tau)
    \\
    & + &
    \!\!\!\!
    \int_{\tau}^{\tau+h} \!\!\! \int_{x_o^0}^{y^0(t)}
    \left[
      \left(
        -c(t,\mu_\tau(t-\tau),u)(x)
        +
        \beta(t,\mu_\tau(t-\tau),u)(x)
      \right)
      \d{\mu_\tau(t-\tau)(x)}
    \right]\!
    \d{t}
    \\
    & + &
    \!\!\!\!
    \sum_{i=0}^n\,
    \int_{\tau}^{\tau+h} \beta(t,\mu_\tau(t-\tau),u) \left(y^i(t)\right) \n^i(t) \d{t}
    \\
    & \leq &
    \!\!\!\!
    \n^{\f}(\tau)
    +
    2 \mathcal{L} \!\!
    \int_{\tau}^{\tau+h} \!\!\! \int_{x_o^0}^{y^0(t)} \!\!\!
    \f (t,x)
    \d{x} \d{t}
    +
    \sum_{i=0}^n \int_{\tau}^{\tau+h} \!\!\!\!
    \beta(t,\mu_\tau(t-\tau),u) \! \left(y^i(t)\right) \! \n^i(t) \d{t}
    \\
    & = &
    \!\!\!\!
    \n^{\f}(\tau)
    +
    \mathcal{O}(h^2)
    +
    \sum_{i=0}^n\,
    \int_{\tau}^{\tau+h}
    \beta(t,\mu_\tau(t-\tau),u) \left(y^i(t)\right) \n^i(t) \d{t} \, ,
  \end{eqnarray*}
  where with $\mathcal{O}(h^k)$ we denote a quantity that can be
  bounded the product of $h^k$ with a constant dependent only on $T,
  \mathcal{L}$ and $\mathcal{C}$.

  Above, $(\mathcal{F}^u_1)$ ensures a bound on $c$ and $\beta$. We
  also used the uniform boundedness of $\f (t, \cdot)$ on $[0,T]$ and
  the estimate
  \begin{displaymath}
    \modulo{y^0(t) - x_o^0}
    \leq
    \sup_{t\in[0,T]}
    \sup_{u\in\mathcal{U}}
    \norma{b(t,u)}_{\C0 (\mathcal{M}^+(\reali_+)\times\reali_+;\reali)} h
    \leq
    \mathcal{L}\, h
  \end{displaymath}
  for $t\in [\tau, \tau+h)$.  For $t\in \left[0,\Delta t\right[$
  define the time dependent measure
  \begin{equation}
    \label{eq:xi}
    \xi(t) = \sum_{i=0}^n p^i(t) \, \delta_{y^i(t)}
    \quad \mbox{ where } \quad
    \left\{
      \begin{array}{rcl}
        p^o(t) & = & \n^o(t)+\n^{\f}(t) \,,
        \\
        p^i(t) & = & \n^i(t), \quad \mathrm{for}\; i=1, \ldots, n
      \end{array}
    \right.
  \end{equation}
  in other words, in the measure $\xi(t)$ the mass created due to the
  boundary condition, described by the density $\f(t,\cdot )$, is
  shifted to the closest Dirac delta. We note that:
  \begin{equation}
    \label{eq:triangle}
    d\left(\mu_\tau (t), \mu^n(t+\tau)\right)
    \leq
    d\left(\mu_\tau(t), \xi(t+\tau)\right)
    +
    d\left(\xi(t+\tau), \mu^n(t+\tau)\right) \,.
  \end{equation}
  Recalling that $t \to y^i(t)$ is Lipschitz continous with Lipschitz
  constant
  \begin{displaymath}
    \Lip (y^i)
    \leq
    \sup_{t\in[0,T]} \sup_{u\in\mathcal{U}}
    \norma{b(t,u)}_{\C0(\mathcal{M}^+(\reali_+)\times\reali_+;\reali)}
    \leq
    \mathcal{L}
  \end{displaymath}
  and that the total mass is uniformly bounded on $[0,T]$, the first
  term in the right hand side of~\eqref{eq:triangle} is estimated as
  follows:
  \begin{eqnarray*}
    d \! \left(\mu_\tau(h), \xi(\tau+h)\right)
    \!\!\!\!
    & = &
    \!\!\!
    d\left(\f(\tau+h,\cdot),\n^{\f}(\tau+h)\,\delta_{y^0(\tau+h)}\right)
    \\
    & \leq &
    \!\!\!
    \modulo{y^0(\tau+h)}\,\n^{\f}(\tau+h)
    \\
    & \leq &
    \!\!\!
    \mathcal{L} \, \Delta t \!
    \left[
      \sup_{t\in[0,T],u\in\mathcal{U}}
      \norma{\beta(t,u)}_{\C0 (\mathcal{M}^+(\reali_+)\times\reali_+;\reali)}
      \int_{\tau}^{\tau+h}\sum_{i=0}^n \n^i(t) \d{t}
      +
      \mathcal{O}(h^2)
    \right]
    \\
    & \leq &
    \!\!\!
    \mathcal{L} \, \Delta t
    \left(\mathcal{L} \, C(T) \, h + \mathcal{O}(h^2) \right)
    \\
    & = &
    \!\!\!
    \Delta t \left(\mathcal{O}(h)+\mathcal{O}(h^2) \right) \, .
  \end{eqnarray*}
  To bound the second term in~\eqref{eq:triangle}, we want to use
  Lemma~\ref{lem:ddeltas}. Hence, we preliminary obtain the following
  estimates on $\modulo{x^i(\tau+h)-y^i(\tau+h)}$ and
  $\modulo{m^i(\tau+h)-p^i(\tau+h)}$:
  \begin{eqnarray*}
    \modulo{x^i(\tau+h)-y^i(\tau+h)}
    & \leq &
    \int_{\tau}^{\tau+h}
    \modulo{
      b(t,\mu^n(t),u)\left(x^i(t)\right)
      -
      b(t,\mu_\tau(t-\tau),u) \left(y^i(t)\right)}\d{t}
    \\
    & \leq &
    \int_{\tau}^{\tau+h}
    \modulo{
      b(t,\mu^n(t),u)\left(x^i(t)\right)
      -
      b(t,\mu_\tau(t-\tau),u)\left(x^i(t)\right)
    } \d{t}
    \\
    & &
    +
    \int_{\tau}^{\tau+h}
    \modulo{
      b(t,\mu_\tau(t-\tau),u)\left(x^i(t)\right)
      -
      b(t,\mu_\tau(t-\tau),u)\left(y^i(t)\right)}
    \d{t}
    \\
    & \leq &
    \mathcal{L}
    \int_{\tau}^{\tau+h}
    d\left(\mu^n(t),\mu_\tau(t-\tau)\right) \d{t}
    +
    \mathcal{L}\int_{\tau}^{\tau+h}\modulo{x^i(t)-y^i(t)} \d{t}
    \\
    & \leq &
    \mathcal{L}
    \int_{\tau}^{\tau+h}
    \left(
      \Lip_{\tau}(\mu^n)\, h
      +
      d\left(\mu^n(\tau), \mu_\tau(0)\right)
      +
      \Lip_{\tau}\left(\mu_\tau(t)\right)\, h
    \right)
    \d{t}
    \\
    & &
    +
    \int_{\tau}^{\tau+h}
    \left(
      \Lip_{\tau}(x^i)\, h
      +
      \modulo{x^i(\tau)-y^i(\tau)} + \Lip_{\tau}(y^i)\, h
    \right)
    \d{t}
    \\
    & \leq &
    \mathcal{O}(h^2) \,,
  \end{eqnarray*}
  since $\mu^n(\tau) = \mu_\tau(0)$ and $x^i(\tau) = y^i(\tau)$.
  Analogous estimates can be used to bound the term
  $\sum_{i=0}^{n}\modulo{m^i(\tau+h)-p^i(\tau+h)}$, taking into
  account~\eqref{eq:xi} and the estimate for $\n^{\f}(t)$:
  \begin{eqnarray*}
    & &
    \sum_{i=0}^{n} \modulo{m^i(\tau+h)-p^i(\tau+h)}
    \\
    & \leq &
    \sum_{i=0}^{n} \int_{\tau}^{\tau+h} \modulo{c(t,\mu^n(t),u)\left(x^i(t)\right)m^i(t) - c(t,\mu_\tau(t-\tau),u)\left(y^i(t)\right)\n^i(t)}\d{t}
    \\
    & &
    +
    \sum_{i=0}^n \int_{\tau}^{\tau+h}
    \modulo{\beta(t,\mu^n(t),u)\left(x^i(t)\right)m^i(t) - \beta(t,\mu_\tau(t-\tau),u)\left(y^i(t)\right)\n^i(t)}\d{t}
    +
    \mathcal{O}(h^2)
    \\
    & \leq &
    \sum_{i=0}^{n} \int_{\tau}^{\tau+h}
    \left(
      \modulo{c(t,\mu^n(t),u)\left(x^i(t)\right)}
      +
      \modulo{\beta(t,\mu^n(t),u)\left(x^i(t)\right) }
    \right)
    \modulo{m^i(t)-\n^i(t)}\d{t}
    \\
    & &
    +
    \sum_{i=0}^{n}
    \int_{\tau}^{\tau+h}
    \n^i(t)
    \modulo{
      c\left(t,\mu^n(t)\right)(x^i(t),u)
      -
      c(t,\mu^n(t),u)\left(y^i(t)\right)}
    \d{t}
    \\
    & &
    +
    \sum_{i=0}^{n} \int_{\tau}^{\tau+h}\n^i(t)\modulo{c(t,\mu^n(t),u)\left(y^i(t)\right) - c(t,\mu_\tau(t-\tau),u)\left(y^i(t)\right)}\d{t}
    \\
    & &
    +
    \sum_{i=0}^{n} \int_{\tau}^{\tau+h}\n^i(t)\modulo{\beta(t,\mu^n(t),u)\left(x^i(t)\right) - \beta(t,\mu^n(t),u)\left(y^i(t)\right)}\d{t}
    \\
    & &
    +
    \sum_{i=0}^{n} \int_{\tau}^{\tau+h}\n^i(t)\modulo{\beta(t,\mu^n(t),u)\left(y^i(t)\right) - \beta(t,\mu_\tau(t-\tau),u)\left(y^i(t)\right)}\d{t}+\mathcal{O}(h^2)
    \\
    & \leq &
    \norma{(c,\beta)}_{\C0}\sum_{i=0}^{n} \int_{\tau}^{\tau+h}\modulo{m^i(t)-\n^i(t)}\d{t}+\norma{(c,\beta)}_{\C{0,1}}\sum_{i=0}^{n} \int_{\tau}^{\tau+h}\n^i(t)\,\modulo{x^i(t)-y^i(t)}\d{t}
    \\
    & &
    +
    \norma{(c,\beta)}_{\C{0,1}}
    \sum_{i=0}^{n} \int_{\tau}^{\tau+h}
    \n^i(t) \, d\left(\mu^n(t),\mu_\tau(t-\tau)\right) \d{t}
    +
    \mathcal{O}(h^2)
    \\
    & \leq &
    \norma{(c,\beta)}_{\C0} \sum_{i=0}^{n} \int_{\tau}^{\tau+h}
    \left(
      \Lip(m^i) h
      +
      \modulo{m^i(\tau)-\n^i(\tau)}+\Lip(\n^i) h
    \right) \d{t}
    \\
    & &
    +
    \norma{(c,\beta)}_{\C{0,1}} \sum_{i=0}^{n} \int_{\tau}^{\tau+h}
    \n^i(t)
    \left(
      \Lip(x^i)h
      +
      \modulo{x^i(\tau)-y^i(\tau)}
      +
      \Lip(y^i) h
    \right)
    \d{t}
    \\
    & &
    +
    \norma{(c,\beta)}_{\C{0,1}}\sum_{i=0}^{n} \int_{\tau}^{\tau+h}
    \n^i(t)
    \left(
      \Lip(\mu^n)h
      +
      d\left(\mu^n(\tau),\mu_\tau(0)\right)
      +
      \Lip(\mu_\tau)h
    \right)
    \d{t}
    +
    \mathcal{O}(h^2)
    \\
    & \leq &
    \norma{(c,\beta)}_{\C{0}} \, h^2 \, \sum_{i=0}^{n}
    \left(\Lip(m^i) + \Lip(\n^i) \right)
    \\
    & &
    +
    \norma{(c,\beta)}_{\C{0,1}}
    h^2
    \left(
      2\norma{b}_{\C0}+\Lip(\mu^n)+\Lip(\mu_\tau)
    \right)
    \sum_{i=0}^{n}\n^i(t)+\mathcal{O}(h^2)
    \\
    & = &
    \mathcal{O}(h^2) \,.
  \end{eqnarray*}
  Inserting the obtained estimates in the integrand
  in~\eqref{eq:sleep}, we get:
  \begin{displaymath}
    \liminf_{h \to 0^+ } \frac{1}{h} \, d\left(\mu_\tau(h), \mu^n(\tau+h)\right)
    \leq
    \liminf_{h \to 0^+ } \frac{1}{h}
    \left(
      \Delta t
      \left(
        \mathcal{O}(h)
        +
        \mathcal{O}(h^2)
      \right)
      +
      \mathcal{O}(h^2)
    \right)
    =
    \O \,\Delta t \,,
  \end{displaymath}
  completing the proof.
\end{proofof}

\subsection{Proofs Related to Section~\ref{sub:control}}

\begin{proposition}{(Helly).}
  \label{prop:helly}
  Fix $N \in \naturali$ with $N>0$. Let $\mathcal{U}$ be a compact
  subset of $\reali^N$ and $T$ be a positive scalar.  Consider a
  sequence of functions $u_n\in \BV([0,T],\mathcal{U})$ such that
  $\sup_{n\in\naturali} \tv_{\reali^N} (u_n) <+\infty$. Then, there
  exists a subsequence $u_{n_k}$ and a function $u \in \BV ([0,T];
  \mathcal{U})$ such that
  \begin{displaymath}
    \lim_{k \to +\infty} \norma{u_{n_k}-u} = 0
    \quad \mbox{ and } \quad
    \tv_{\reali^N} (u) \leq \sup_{n\in\naturali} \tv_{\reali^N}(u_n) \, .
  \end{displaymath}
\end{proposition}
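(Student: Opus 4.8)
The plan is to follow the classical argument for Helly's selection principle, reducing the vector-valued statement to the scalar monotone case by means of a Jordan-type decomposition. Set $M = \sup_{n \in \naturali} \tv_{\reali^N}(u_n)$, which is finite by hypothesis. For each coordinate $j = 1, \ldots, N$ and each $n$, let $V_n^j(t)$ denote the total variation of the scalar component $u_n^j$ over $[0,t]$; since $\modulo{u_n^j(t) - u_n^j(s)} \le \norma{u_n(t) - u_n(s)}$, the function $V_n^j$ is nondecreasing and bounded by $M$. Writing
\begin{displaymath}
  p_n^j(t) = \tfrac12 \left(V_n^j(t) + u_n^j(t)\right),
  \qquad
  q_n^j(t) = \tfrac12 \left(V_n^j(t) - u_n^j(t)\right),
\end{displaymath}
one checks that both $p_n^j$ and $q_n^j$ are nondecreasing in $t$ and uniformly bounded, the boundedness of $u_n^j$ being guaranteed by the compactness of $\mathcal{U}$.

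First I would establish the scalar building block: any uniformly bounded sequence of nondecreasing real functions on $[0,T]$ admits a pointwise convergent subsequence, with nondecreasing limit. This is the delicate step and proceeds by a diagonal extraction over a countable dense set $D \subset [0,T]$ containing $0$ and $T$; at each point of $D$ the values lie in a fixed compact interval, so Bolzano--Weierstrass together with a diagonal argument yields convergence on $D$. One then defines a candidate limit on all of $[0,T]$ by monotone approximation from $D$ and verifies convergence at every continuity point of the limit, the at most countably many jump points being absorbed by a further diagonal extraction.

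Applying this scalar result successively to the finitely many monotone sequences $p_n^j$ and $q_n^j$, for $j = 1, \ldots, N$, I would obtain a single subsequence $u_{n_k}$ along which $p_{n_k}^j \to p^j$ and $q_{n_k}^j \to q^j$ pointwise on $[0,T]$, with $p^j, q^j$ nondecreasing. Setting $u^j = p^j - q^j$ and $u = (u^1, \ldots, u^N)$ then gives $u_{n_k}(t) \to u(t)$ for every $t \in [0,T]$. Because each $u_{n_k}(t) \in \mathcal{U}$ and $\mathcal{U}$ is closed, the limit satisfies $u(t) \in \mathcal{U}$ for all $t$, so $u \in \BV([0,T]; \mathcal{U})$ once the variation bound is in place.

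The total variation estimate is then a routine consequence of lower semicontinuity under pointwise convergence. Indeed, for any admissible nodes $t_0 < t_1 < \cdots < t_m$ as in~\eqref{eq:TV}, the finite sum $\sum_{i=1}^m \norma{u(t_i) - u(t_{i-1})}$ equals $\lim_{k} \sum_{i=1}^m \norma{u_{n_k}(t_i) - u_{n_k}(t_{i-1})}$, which is bounded by $\liminf_k \tv_{\reali^N}(u_{n_k}) \le M$; taking the supremum over all partitions yields $\tv_{\reali^N}(u) \le M = \sup_{n} \tv_{\reali^N}(u_n)$. The main obstacle is the scalar diagonal extraction producing convergence at every point of $[0,T]$ rather than merely on the dense set $D$: the behaviour at the countable set of discontinuities of the monotone limits is what requires care, while the decomposition and the semicontinuity argument are standard.
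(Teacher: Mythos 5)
Your proof is correct. Note that the paper does not actually prove this proposition: it simply refers the reader to Bressan's lecture notes (Chapter~2, Theorem~2.3), so what you have written is precisely the classical argument that the cited reference contains. Your route reduces to the scalar monotone case via the componentwise Jordan decomposition $u_n^j = p_n^j - q_n^j$; the variant in Bressan's text avoids this decomposition by extracting convergence of the variation functions $V_n(t)=\tv_{\reali^N}(u_n|_{[0,t]})$ and of $u_n$ on a countable dense set, and then using the inequality $\norma{u_n(t)-u_n(s)}\leq V_n(t)-V_n(s)$ to propagate convergence to all continuity points of the limit of $V_n$ -- the two arguments are interchangeable and both hinge on the same diagonal extraction plus the countability of the jump set, which you correctly identify as the delicate step. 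One small remark: the conclusion $\lim_k\norma{u_{n_k}-u}=0$ in the statement leaves the norm unspecified; your proof delivers convergence at every point of $[0,T]$ (hence pointwise a.e.), which is exactly how the proposition is invoked later in the proofs of Theorem~\ref{thm:optPDE} and Corollary~\ref{cor:Final}, and is the correct reading, since uniform convergence is genuinely false under these hypotheses (consider characteristic functions of shrinking intervals with moving endpoints). Your lower semicontinuity argument for the total variation bound is also sound.
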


\noindent For a proof, see for instance~\cite[Chapter~2,
Theorem~2.3]{BressanLectureNotes}.

\begin{lemma}
  \label{lem:lsemc}
  Fix $T>0$ and for all $u \in \BV([0,T]; \mathcal{U})$ call $\mu_u$
  the corresponding solution to~\eqref{eq:5}. Assume there exists a
  constant $\mathcal{L}$ such that for all $u_1, \, u_2 \in \BV([0,T];
  \mathcal{U})$
  \begin{displaymath}
    d(\mu_{u_1}, \mu_{u_2})
    \leq
    \mathcal{L} \; \norma{u_1 - u_2}_{\L\infty ([0,T]; \reali)}.
  \end{displaymath}
  Let $\gamma \in \C{0,1}(\reali_+; \reali_+)$ and $j$
  satisfy~$\bf(J_1)$--$\bf (J_3)$. Then, the functional $\mathcal{J}$
  defined in~\eqref{eq:8}--\eqref{eq:9} is lower semi-continuous with
  respect to the $\L\infty$--norm.
\end{lemma}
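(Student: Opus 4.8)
The plan is to write $\mathcal{J} = \widetilde{\mathcal{J}} + \tv_{\reali^N}$, as in~\eqref{eq:8}--\eqref{eq:9}, and to show separately that $\widetilde{\mathcal{J}}$ is \emph{continuous} and that $u \mapsto \tv_{\reali^N}(u)$ is lower semicontinuous for the $\L\infty$--norm; lower semicontinuity of the sum then follows at once. So fix $u \in \BV([0,T];\mathcal{U})$ and a sequence $u_n \in \BV([0,T];\mathcal{U})$ with $\norma{u_n - u}_{\L\infty([0,T];\reali^N)} \to 0$.

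First I would transfer the convergence of the controls to the inner argument of $j$. Set $x_u(t) = \int_{\reali_+}\gamma(\xi)\,\d{\mu_u(t)(\xi)}$. Since $\gamma \in \C{0,1}(\reali_+;\reali_+)$, dividing $\gamma$ by $\norma{\gamma}_{\C{0,1}(\reali_+;\reali)}$ produces an admissible competitor in the definition~\eqref{eq:d} of the flat distance (Lipschitz test functions being uniformly approximable, with controlled Lipschitz constant, by the $\C1$ functions appearing therein), whence $\modulo{\int_{\reali_+}\gamma\,\d{(\mu'-\mu'')}} \leq \norma{\gamma}_{\C{0,1}(\reali_+;\reali)}\,d(\mu',\mu'')$. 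Combined with the standing assumption $d(\mu_{u_n}(t),\mu_u(t)) \leq \mathcal{L}\,\norma{u_n-u}_{\L\infty}$, valid uniformly in $t$, this gives
\[
  \modulo{x_{u_n}(t) - x_u(t)}
  \leq
  \norma{\gamma}_{\C{0,1}(\reali_+;\reali)}\,\mathcal{L}\,\norma{u_n-u}_{\L\infty}
  \qquad \mbox{for all } t \in [0,T] \,.
\]

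Next I would use $\bf(J_3)$. Writing $\delta_n = \bigl(1 + \norma{\gamma}_{\C{0,1}(\reali_+;\reali)}\,\mathcal{L}\bigr)\,\norma{u_n-u}_{\L\infty}$, the bounds above and the monotonicity of $\omega$ yield, for a.e.~$t$,
\[
  \modulo{j\bigl(t,u_n(t),x_{u_n}(t)\bigr) - j\bigl(t,u(t),x_u(t)\bigr)}
  \leq
  L(t)\,\omega\bigl(\modulo{u_n(t)-u(t)} + \modulo{x_{u_n}(t)-x_u(t)}\bigr)
  \leq
  L(t)\,\omega(\delta_n) \,.
\]
Integrating over $[0,T]$, using $L \in \L1([0,T];\reali_+)$ and $\omega(\delta_n) \to \omega(0) = 0$, gives $\modulo{\widetilde{\mathcal{J}}(u_n) - \widetilde{\mathcal{J}}(u)} \leq \omega(\delta_n)\,\norma{L}_{\L1} \to 0$. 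Finiteness of the quantities involved follows from $\bf(J_2)$: comparing a generic $u$ with the distinguished $\hat u$ through $\bf(J_3)$, and using that $\mathcal{U}$ is compact while the total mass of $\mu_u(t)$ is bounded on $[0,T]$ uniformly in $u$ (so that $x_u$ ranges in a bounded set on which $\omega$ is evaluated at a bounded argument), one bounds $\widetilde{\mathcal{J}}(u) \leq \widetilde{\mathcal{J}}(\hat u) + \mathcal{O}(1)\,\norma{L}_{\L1} < +\infty$. Hence $\widetilde{\mathcal{J}}$ is continuous for the $\L\infty$--norm.

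Finally, $\L\infty$ convergence implies pointwise convergence, so for every partition $0 \leq t_0 < t_1 < \cdots < t_m \leq T$ one has $\sum_{i=1}^m \modulo{u(t_i)-u(t_{i-1})} = \lim_{n}\sum_{i=1}^m \modulo{u_n(t_i)-u_n(t_{i-1})} \leq \liminf_n \tv_{\reali^N}(u_n)$; taking the supremum over partitions gives the usual lower semicontinuity $\tv_{\reali^N}(u) \leq \liminf_n \tv_{\reali^N}(u_n)$. Combining this with the continuity of $\widetilde{\mathcal{J}}$,
\[
  \mathcal{J}(u)
  =
  \lim_n \widetilde{\mathcal{J}}(u_n) + \tv_{\reali^N}(u)
  \leq
  \liminf_n \Bigl(\widetilde{\mathcal{J}}(u_n) + \tv_{\reali^N}(u_n)\Bigr)
  =
  \liminf_n \mathcal{J}(u_n) \,,
\]
which is the claim. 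I expect the main obstacle to be the continuity of $\widetilde{\mathcal{J}}$: one must pass from $\L\infty$ closeness of the controls to closeness of the states $x_u$ through the flat metric via the standing stability estimate, and secure the finiteness coming from $\bf(J_2)$; by contrast, the lower semicontinuity of $\tv_{\reali^N}$ is standard.
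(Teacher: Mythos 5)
Your proposal is correct and follows essentially the same route as the paper: split $\mathcal{J}=\widetilde{\mathcal{J}}+\tv_{\reali^N}$, prove continuity of $\widetilde{\mathcal{J}}$ via the Lipschitz bound $\bigl|\int_{\reali_+}\gamma\,\d{(\mu_{u_n}-\mu_u)}\bigr|\leq\mathcal{O}(1)\,\norma{\gamma}\,d(\mu_{u_n},\mu_u)$ together with $\bf(J_3)$ and the standing stability estimate, and invoke lower semicontinuity of the total variation (which the paper cites from Evans--Gariepy rather than proving by hand, and your normalization of $\gamma$ by its $\C{0,1}$ norm is in fact slightly more careful than the paper's use of the $\L\infty$ norm alone).
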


\begin{proof}
  Let $u_n \in \BV ([0,T]; \mathcal{U})$ be a sequence converging to
  $u \in \BV ([0,T]; \mathcal{U})$ in the $\L\infty$-norm.  First we
  recall that by~\cite[Theorem~1, \S~5.2.1]{EvansGariepy}
  $\tv_{\reali^N} (u) \leq \liminf_{n\to\infty} \tv_{\reali^N} (u_n)$.

  Next we show the sequential continuity of the map
  $\widetilde{\mathcal{J}}$ defined in~\eqref{eq:8}, using $\bf(J3)$
  and the fact that $\omega$ is a nondecreasing function by~$\bf
  (J_3)$.
  \begin{eqnarray*}
    \modulo{\widetilde{\mathcal{J}} (u_n) - \widetilde{\mathcal{J}} (u)}
    \!\!\!\!
    & \leq &\!\!\!
    \int_0^{T}
    \modulo{
      j \! \left(t,u_n(t),\int_{\reali_+}\gamma(\xi) \d{\mu_{u_n}(t)(\xi)}\right)-
      j \! \left(t,u(t),\int_{\reali_+}\gamma(\xi) \d{\mu_u(t)(\xi)}\right)
    } \d{t}
    \\
    & \leq &\!\!\!
    \int_0^{T}
    \!\!
    L(t)\,
    \omega \!
    \left(
      \modulo{
        \int_{\reali_+} \!\! \gamma(\xi)\d{\mu_{u_n}(t)(\xi)}
        -
        \int_{\reali_+} \!\! \gamma(\xi)\d{\mu_{u}(t)(\xi)}
      }
      +
      \modulo{u_n(t)-u(t)}
    \right)
    \d{t}
    \\
    & \leq &\!\!\!
    \omega \!
    \left( \!
      \norma{\gamma}_{\L\infty(\reali_+; \reali_+)}
      \sup_{t\in[0,T]}
      d\left(\mu_{u_n}(t),\mu_{u}(t)\right)
      +
      \norma{u_n - u}_{\L\infty ([0,T];\reali^N)}
      \!\right)
    \! \int_0^{T} \!\!\! L(t)\d{t}
    \\
    & \leq &\!\!\!
    \omega \!
    \left(
      (1 + \mathcal{L} \norma{\gamma}_{\L\infty(\reali_+; \reali_+)})
      \norma{u_n - u}_{\L\infty ([0,T];\reali^N)}    \right)
    \! \int_0^{T} \!\!\! L(t)\d{t}
    \\
    & \to &\!\!\!
    0
    \quad \mbox{ in } \L\infty ([0,T]; \reali^N)
    \mbox{ as } n \to +\infty\,,
  \end{eqnarray*}
  completing the proof.
\end{proof}

\begin{proofof}{Theorem~\ref{thm:optPDE}}
  Let $\epsilon_n$ be a strictly decreasing sequence converging to
  $0$. Correspondingly, there exists a sequence $u_{\epsilon_n} \in
  \BV ([0,T]; \mathcal{U})$ such that
  \begin{displaymath}
    \mathcal{J}(u_{\epsilon_{n}})\leq \inf_{u}\mathcal{J}(u)+\epsilon_n
  \end{displaymath}
  and, without loss of generality, we may also assume that
  $\mathcal{J} (u_{\epsilon_n}) \leq \mathcal{J} (\hat u) +1$ for all
  $n$.  Moreover, by~$\bf(J_1)$ and~\eqref{eq:9}
  \begin{displaymath}
    \tv(u_{\epsilon_n})
    \leq
    \mathcal{J}(u_{\epsilon_n})
    \leq
    \mathcal{J}(\hat u) +1
  \end{displaymath}
  So that Proposition~\ref{prop:helly} can be applied, showing that,
  up to a subsequence, $u_{\epsilon_n}$ converges pointwise a.e.~to a
  function $u^* \in \BV ([0,T]; \mathcal{U})$. Therefore,
  \begin{displaymath}
    \begin{array}{rcl@{\qquad\quad}l}
      \mathcal{J} (u^*)
      & = &
      \displaystyle
      \mathcal{J} (\lim_{n\to+\infty} u_{\epsilon_n})
      &
      [\mbox{by the definition of }u^*]
      \\[6pt]
      & \leq &
      \displaystyle
      \liminf_{n\to\infty} \mathcal{J}(u_{\epsilon_n})
      & [\mbox{by Lemma~\ref{lem:lsemc}}]
      \\[6pt]
      & = &
      \displaystyle
      \inf_{u \in \BV ([0,T];\mathcal{U})} \mathcal{J} (u)
      &
      [\mbox{by the definition of }u_{\epsilon_n}]\,,
    \end{array}
  \end{displaymath}
  completing the proof.
\end{proofof}

\begin{proofof}{Theorem~\ref{thm:optODE}}
  The proof follows the same lines as that of
  Theorem~\ref{thm:optPDE}.
\end{proofof}

\begin{proofof}{Corollary~\ref{cor:Final}}
  We first prove the uniform convergence $\mathcal{J}_n \to
  \mathcal{J}$ of the costs on $\BV ([0,T]; \mathcal{U})$,
  using~$\bf(J_3)$, \eqref{eq:8}, \eqref{eq:9}, \eqref{eq:10},
  \eqref{eq:11} and Theorem~\ref{thm:Conv}, for all $n$, we have:
  \begin{eqnarray*}
    & &
    \sup_{u \in \BV ([0,T];\mathcal{U})}
    \modulo{\mathcal{J}_n(u) - \mathcal{J}(u)}
    \\
    & = &
    \sup_{u \in \BV ([0,T];\mathcal{U})}
    \modulo{\widetilde{\mathcal{J}}_n(u) - \widetilde{\mathcal{J}}(u)}
    \\
    & \leq &
    \sup_{u \in \BV ([0,T];\mathcal{U})}
    \int_0^T
    \modulo{
      j\left(t,u(t),\int_{\reali_+}\gamma(\xi) \d{\mu^n_{u}(t)(\xi)}\right)
      -
      j\left(t,u(t),\int_{\reali_+}\gamma(\xi) \d{\mu_u(t)(\xi)}\right)
    } \d{t}
    \\
    & \leq &
    \sup_{u \in \BV ([0,T];\mathcal{U})}
    \int_0^{T} L(t) \;
    \omega \! \left(
      \modulo{
        \int_{\reali_+}\gamma(\xi)\d{\mu^n_{u}(t)(\xi)}
        -
        \int_{\reali_+}\gamma(\xi)\d{\mu_{u}(t)(\xi)}
      }\right)
    \d{t}
    \\
    & \leq &
    \int_0^{T} L(t) \d{t} \;
    \sup_{u \in \BV ([0,T];\mathcal{U})}
    \omega \! \left(
      \norma{\gamma}_{L^{\infty}} \,
      \sup_{t\in[0,T]} d\left(\mu^n_{u}(t),\mu_{u}(t)\right)
    \right)
    \\
    & \leq &
    \int_0^{T} L(t) \d{t} \;
    \omega \!\left(
      C \;\norma{\gamma}_{L^{\infty}} \,
      \left[
        {\Delta t}_n
        +
        d\left(\mu_o, \sum_{i=0}^n m_o^i \, \delta_{x_o^i}\right)
      \right]
    \right)
    \\
    & \to & 0
    \quad \mbox{ as } n \to +\infty \,,
  \end{eqnarray*}
  which immediately implies~\eqref{eq:limJ}.

  Using~$\bf(J_2)$, the same procedure used in the proof of
  Theorem~\ref{thm:optPDE} ensures that $\tv (u_n)$ is bounded
  uniformly in $n$. By Proposition~\ref{prop:helly}, up to a
  subsequence, $u_n \to \bar u$ a.e.~on $[0,T]$,
  proving~\eqref{eq:infJ}. Using Lemma~\ref{lem:lsemc} and the uniform
  convergence of $\mathcal{J}_n$ to $\mathcal{J}$ proved above, we
  have:
  \begin{eqnarray*}
    \mathcal{J}(\bar u)
    & \leq &
    \liminf_{n\to\infty} \mathcal{J}(u_n)
    \\
    & = & \liminf_{n\to\infty}
    \left(
      \mathcal{J}(u_n)
      +
      \mathcal{J}_n(u_n)
      -
      \mathcal{J}_n(u_n)
    \right)
    \\
    & \leq &
    \liminf_{n\to\infty}
    \mathcal{J}_n (u_n)
    +
    \lim_{n\to\infty}
    \left(
      \sup_{u \in \BV ([0,T];\mathcal{U})}
      \modulo{\mathcal{J}_n(u)-\mathcal{J}(u)}
    \right)
    \\
    & \leq &
    \lim_{n\to\infty} \mathcal{J}_n(u_n)
    \\
    & = &
    \inf_{u \in \BV ([0,T];\mathcal{U})}
    \mathcal{J}(u) \,,
  \end{eqnarray*}
  where~\eqref{eq:limJ} was used to obtain the last equality.
\end{proofof}

\appendix
\section{Appendix: ODE Results}

For completeness, we collect here a few basic ODE results using
exactly the spaces and norms of use above.

\begin{lemma}
  \label{lem:Calculus2}
  Fix $T>0$ and a compact $\mathcal{U}\subset \reali^M$. Let $f \colon
  [0,T] \times \reali^N \times \mathcal{U} \to \reali^N$ be such that
  \begin{enumerate}[$\bf(f_1)$]
  \item $t \to f (t,x;u)$ is measurable for all $x \in \reali^N$ and
    $u \in \mathcal{U}$;
  \item $(x,u) \to f (t,x;u)$ is in $\C1$ for a.e.~$t \in [0,T]$ and
    there exists a constant $L>0$ such that for a.e.~$t \in [0,T]$,
    for all $x_1,x_2 \in \reali^N$ and for all $u_1,u_2 \in
    \mathcal{U}$,
    \begin{displaymath}
      \norma{f (t,x_1;u_1) - f (t,x_2;u_2)}
      \leq
      L \left(\norma{x_1-x_2} + \norma{u_1-u_2}\right) \,.
    \end{displaymath}
  \end{enumerate}
  Then, for all $x_o \in \reali^N$ and all $u \in \L\infty
  ([0,T];\mathcal{U})$, the problem
  \begin{equation}
    \label{eq:calculusCP}
    \left\{
      \begin{array}{l}
        \dot x = f (t,x;u)
        \\x (0) = x_o
      \end{array}
    \right.
  \end{equation}
  admits a unique solution $X (u) \colon [0,T] \to \reali^N$. The map
  $X \colon \L\infty ([0,T];\mathcal{U}) \to \C1 ([0,T];\reali^N)$ is
  Gateaux differentiable in any direction $v \in \L\infty
  ([0,T];\mathcal{U})$ and the directional derivative $D_vX (u)$
  solves the Cauchy problem
  \begin{equation}
    \label{eq:calculusG}
    \left\{
      \begin{array}{l}
        \frac{\d{\,}}{\d{t}} D_vX (u) =
        \partial_x f\left(t, X (u); u\right) D_vX (u)
        +
        \partial_v f\left(t, X (u); u\right) v
        \\[6pt]
        \left(D_v X (u)\right) (0) = 0 \,.
      \end{array}
    \right.
  \end{equation}
\end{lemma}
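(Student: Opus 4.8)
The plan is to treat \eqref{eq:calculusCP} first and then the Gateaux differentiability.

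\emph{Well posedness.} Fix $u \in \L\infty([0,T];\mathcal{U})$ and recast \eqref{eq:calculusCP} as the integral equation $x(t) = x_o + \int_0^t f(s,x(s);u(s))\,\d{s}$. By $\bf(f_1)$ the integrand is measurable in $s$, and $\bf(f_2)$ supplies the global bound $\norma{f(t,x_1;u)-f(t,x_2;u)} \le L\norma{x_1-x_2}$ uniformly in $t$; together with the compactness of $\mathcal{U}$ this places the problem in the Carath\'eodory setting. I would apply the contraction principle to the integral operator on $\C0([0,T];\reali^N)$ equipped with a Bielecki (exponentially weighted) norm, equivalently run Picard iterations closed by Gronwall's Lemma, obtaining a unique solution $X(u)$ on all of $[0,T]$, Lipschitz in time (and in $\C1$ when $f$ is in addition continuous in $t$). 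The same Gronwall estimate applied to two solutions yields the preliminary Lipschitz dependence
\[
  \norma{X(u_1)-X(u_2)}_{\C0([0,T];\reali^N)}
  \le
  L\,T\,e^{LT}\,\norma{u_1-u_2}_{\L\infty([0,T];\reali^M)} \,,
\]
to be used below with $u_1 = u + \epsilon v$ and $u_2 = u$.

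\emph{Gateaux differentiability.} Fix $u$ and a direction $v \in \L\infty([0,T];\reali^M)$, understood so that $u+\epsilon v$ stays admissible for small $\epsilon>0$ (extending $f$ as a $\C1$ map to a neighbourhood of $\mathcal{U}$ if needed, so that $\partial_v f = \partial_u f$ in \eqref{eq:calculusG} is meaningful). Since $f$ is $\C1$ in $(x,u)$ with Jacobians bounded by $L$, problem \eqref{eq:calculusG} is linear with $\L\infty$ coefficients, hence admits a unique solution $Z := D_vX(u)$. Setting $W_\epsilon = \dfrac{X(u+\epsilon v)-X(u)}{\epsilon} - Z$, I would prove $\norma{W_\epsilon}_{\C0} \to 0$ as $\epsilon \to 0^+$. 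Writing $X(u+\epsilon v)-X(u)$ in integral form and expanding $f$ to first order along the segment joining $(X(u),u)$ to $(X(u+\epsilon v),u+\epsilon v)$ gives
\[
  \dot W_\epsilon = A_\epsilon(s)\, W_\epsilon + \mathcal{R}_\epsilon(s) \,,
  \qquad
  W_\epsilon(0) = 0 \,,
\]
where $A_\epsilon(s) = \int_0^1 \partial_x f(\cdot_\theta)\,\d\theta$ is bounded by $L$ and $\mathcal{R}_\epsilon$ gathers the differences between the Jacobians $\partial_x f,\partial_u f$ evaluated along the segment and at the base point $(X(u),u)$, contracted respectively with $Z$ and $v$.

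Since $A_\epsilon$ is bounded by $L$, Gronwall's Lemma gives $\norma{W_\epsilon}_{\C0} \le e^{LT}\int_0^T \norma{\mathcal{R}_\epsilon(s)}\,\d{s}$, so the argument reduces to showing $\int_0^T\norma{\mathcal{R}_\epsilon}\,\d{s}\to 0$. This is the main obstacle, since $f$ is only measurable in $t$ and no uniform continuity in time is available. I would instead exploit that each bracket $\partial_x f(\cdot_\theta)-\partial_x f(s,X(u);u)$ (and its $\partial_u f$ analogue) is dominated by $2L$, while $Z$ and $v$ are bounded; that $\partial_x f$ and $\partial_u f$ are continuous in $(x,u)$ for a.e.~$s$; and that $X(u+\epsilon v)\to X(u)$ uniformly (by the preliminary estimate) with $\epsilon v\to 0$, so that the brackets vanish pointwise a.e.~in $s$. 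Dominated convergence then forces $\int_0^T\norma{\mathcal{R}_\epsilon}\,\d{s}\to 0$, whence $\norma{W_\epsilon}_{\C0}\to 0$. This establishes that $X$ is Gateaux differentiable at $u$ in the direction $v$ and identifies its derivative with the solution $Z$ of \eqref{eq:calculusG}, as claimed.
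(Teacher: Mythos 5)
Your argument is correct and follows the same skeleton as the paper's: Carath\'eodory well posedness, the integral form of the difference quotient, a first-order expansion of $f$ along the segment joining $\left(X (u),u\right)$ to $\left(X (u+\epsilon v),u+\epsilon v\right)$, and a Gronwall closure that reduces everything to showing $\int_0^T\norma{\mathcal{R}_\epsilon (s)}\d{s}\to 0$. The genuine difference lies in how that last limit is obtained. The paper applies Lusin's theorem to $(\partial_x f,\partial_v f)$ viewed as an $\L\infty$ map from $[0,T]$ into $\C{0,1} (\reali^N\times\mathcal{U};\reali^{2N^2})$: on a compact $K\subset[0,T]$ of almost full measure the Jacobians are uniformly continuous, so the brackets are at most $\epsilon$ there, while on the exceptional set of measure less than $\epsilon$ they are merely bounded by $2L$; this produces the quantitative bound $\delta_h (t)\leq C\,\epsilon\, e^{L t}$. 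You instead observe that for a.e.~$s$ the brackets tend to zero pointwise, uniformly in $\theta$, because $\partial_x f (s,\cdot;\cdot)$ and $\partial_v f (s,\cdot;\cdot)$ are continuous while the segment collapses onto the base point (using your preliminary Lipschitz estimate for $X (u+\epsilon v)-X (u)$ and $\epsilon v\to 0$ in $\L\infty$), and you conclude by dominated convergence with dominating constant $2L\left(\norma{Z}_{\C0}+\norma{v}_{\L\infty}\right)$. Your route is more elementary: it stays entirely within the scalar hypotheses $\bf(f_1)$--$\bf(f_2)$ and avoids invoking Lusin's theorem for a Banach-space-valued map, at the price of losing the explicit rate in $\epsilon$, which is not needed for Gateaux differentiability. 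Both proofs share the same unaddressed technicality, namely that $u+\epsilon v$ need not take values in $\mathcal{U}$; you at least flag it by extending $f$ to a neighbourhood of $\mathcal{U}$.
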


\begin{proof}
  The map $X$ is well defined by the standard theory of Caratheodory
  ODEs, see for instance~\cite{Filippov}. Moreover, there exists a
  compact $\Omega \subset \reali^N$ such that for all $u \in \L\infty
  ([0,T];\mathcal{U})$, $\left(X (u)\right) ([0,T]) \subset \Omega$.

  To prove the directional differentiability, call $g$ the solution to
  the linear problem~\eqref{eq:calculusG} and use the integral form of
  the Cauchy problem~\eqref{eq:calculusCP} to obtain
  \begin{eqnarray*}
    & &
    \frac{1}{h} \left(X (u+hv) - X (u)\right) (t)
    -
    g (t)
    \\
    & = &
    \int_0^t
    \frac{f\left(\tau, \left(X (u+ h v)\right) (\tau), (u+hv) (\tau)\right)
      -
      f\left(\tau, \left(X (u)\right) (\tau), u (\tau)\right)}{h}
    \d\tau
    -
    g (t)
    \\
    & = &
    \int_0^t
    \int_0^1
    \partial_x f\left(
      \tau, \left(X (u+ \theta h v)\right) (\tau), (u+ \theta h v) (\tau)
    \right)
    \d\theta
    \frac{\left(X (u+hv)-X (u)\right) (\tau)}{h} \d{\tau}
    \\
    & &
    +
    \int_0^t \int_0^1
    \partial_v f\left(
      \tau, \left(X (u+ \theta h v)\right) (\tau), (u+ \theta h v) (\tau)
    \right)
    \d\theta
    v (\tau) \d{\tau} - g (t)
    \\
    & = &
    \int_0^t
    \int_0^1
    \left[
      \partial_x f\left(
        \tau, \left(X (u+ \theta h v)\right) (\tau), (u+ \theta h v) (\tau)
      \right)
      -
      \partial_x f\left(\tau, X (u); u\right)
    \right]
    \d\theta
    \\
    & &
    \qquad\qquad\qquad\qquad
    \times
    \frac{\left(X (u+hv)-X (u)\right) (\tau)}{h} \d{\tau}
    \\
    & &
    +
    \int_0^t
    \partial_x f\left(\tau, X (u); u\right)
    \left(
      \frac{\left(X (u+hv)-X (u)\right) (\tau)}{h} - g (\tau)
    \right)
    \d\tau
    \\
    & &
    +
    \int_0^t \int_0^1
    \left[
      \partial_v f\left(
        \tau, \left(X (u+ \theta h v)\right) (\tau), (u+ \theta h v) (\tau)
      \right)
      -
      \partial_v f\left(\tau, X (u); u\right)
    \right]
    \d\theta
    v (\tau) \d{\tau}
  \end{eqnarray*}
  Lusin's Theorem~\cite[Theorem~1]{LoebTalvila}, applied to
  $(\partial_x f, \partial_v f) \in \L\infty \left([0,T]; \C{0,1}
    (\reali^N\times\mathcal{U};\reali^{2N^2})\right)$, ensures that
  for any $\epsilon>0$, there exists a compact set $K \subset [0,T]$
  such that the Lebesgue measure of $[0,T] \setminus K $ is smaller
  than $\epsilon$ and both $\partial_x f$ and $\partial_v f$ are
  continuous on $K \times \Omega \times \mathcal{U}$, hence also
  uniformly continuous. Therefore, if $h$ is sufficiently small,
  \begin{eqnarray*}
    \sup_{K \times \Omega \times \mathcal{U}}
    \sup_{\theta \in [0,1]}
    \norma{
      \partial_x f\left(
        \tau, \left(X (u+ \theta h v)\right) (\tau), (u+ \theta h v) (\tau)
      \right)
      -
      \partial_x f\left(\tau, X (u); u\right)}
    & \leq & \epsilon \,,
    \\
    \sup_{K \times \Omega \times \mathcal{U}}
    \sup_{\theta \in [0,1]}
    \norma{
      \partial_v f\left(
        \tau, \left(X (u+ \theta h v)\right) (\tau), (u+ \theta h v) (\tau)
      \right)
      -
      \partial_v f\left(\tau, X (u); u\right)}
    & \leq & \epsilon \,.
  \end{eqnarray*}
  Introduce now the quantity
  \begin{displaymath}
    \delta_h (t)
    =
    \norma{    \frac{1}{h} \left(X (u+hv) - X (u)\right) (t)
      -
      g (t)
    } \,.
  \end{displaymath}
  Since $\norma{\partial_x f} \leq L$, $\norma{\partial_v f} \leq L$,
  $\norma{f}_{\L\infty ([0,T]\times\Omega\times\mathcal{U};\reali^N)}
  < +\infty$, the above estimates lead to
  \begin{eqnarray*}
    \delta_h (t)
    & \leq &
    (2 \, L \, \epsilon + \epsilon \, t)
    \norma{f}_{\L\infty ([0,T]\times\Omega\times\mathcal{U};\reali^N)}
    +
    \int_0^t L \, \delta_h (\tau) \d\tau
    +
    (2 \, L \, \epsilon + \epsilon \, t) \norma{v}_{\L\infty ([0,T];\reali^M)}
    \\
    & = &
    (2 \, L + t)
    \left(
      \norma{f}_{\L\infty ([0,T]\times\Omega\times\mathcal{U};\reali^N)}
      +
      \norma{v}_{\L\infty ([0,T];\reali^M)}
    \right)
    \epsilon
    +
    \int_0^t L \, \delta_h (\tau) \d\tau \,.
  \end{eqnarray*}
  An application of Gronwall Lemma yields that for all $\epsilon>0$,
  if $h$ is sufficiently small
  \begin{displaymath}
    \delta_h (t) \leq
    (2 \, L + t)
    \left(
      \norma{f}_{\L\infty ([0,T]\times\Omega\times\mathcal{U};\reali^N)}
      +
      \norma{v}_{\L\infty ([0,T];\reali^M)}
    \right)
    \epsilon \, e^{L\,t}
  \end{displaymath}
  completing the proof.
\end{proof}

\smallskip

\noindent\textbf{Acknowledgment:} R.M.C.~was partially supported by
the 2015~GNAMPA project \emph{Balance Laws in the Modeling of
  Physical, Biological and Industrial Processes} and by the CaRiPLo
project 2013-0893. The research of M.R.~received funding from the
National Science Centre, DEC-2012/05/E/ST1/02218. The research of
P.G.~received funding from the National Science Centre, Poland,
2014/13/B/ST1/03094.

{\small

  \bibliographystyle{abbrv}

  \bibliography{control4}

}

\end{document}